\newcommand{\defword}[1]{\textcolor{red}{\em #1}}
\DeclareMathOperator*{\argmin}{\arg\!\min}
\newtheorem{example}[theorem]{Example}
\newtheorem{question}[theorem]{Question}
\newtheorem{assumption}[theorem]{Assumption}
\crefname{hypothesis}{Hypothesis}{Hypotheses}
\title{Multistationarity of Reaction Networks\\ with One-Dimensional Stoichiometric Subspaces\thanks{Submitted to the editors DATE.
\funding{XT was funded by the NSFC12001029.}
}}
\author{
Kexin Lin\thanks{School of Mathematical Sciences, Beihang University, Beijing,  China
  (\email{19377317@buaa.edu.cn}).}
\and
Xiaoxian Tang\thanks{School of Mathematical Sciences, Beihang University, Beijing,  China
  (\email{xiaoxian@buaa.edu.cn}, \url{https://sites.google.com/site/rootclassification/}).}
\and Zhishuo Zhang\thanks{School of Mathematical Sciences, Beihang University, Beijing,  China
  (\email{793008192@buaa.edu.cn}).}
}
\begin{document}

\maketitle
\begin{abstract}
We study the multistationarity for the reaction networks with one-dimensional stoichiometric subspaces, and we focus on the networks admitting finitely many positive steady states.
We prove that if  a network admits multistationarity, then network has an embedded one-species network with arrow diagram $(\rightarrow,\leftarrow)$ and another with arrow diagram $(\leftarrow,\rightarrow)$. The inverse is also true if there exist
 two reactions in the network such that the subnetwork consisting of the two reactions  admits at least one and  finitely many positive steady states.
 We also prove that if a network admits at least three positive steady states, then
it contains at least three bi-arrow diagrams. More than that, we completely characterize the bi-reaction networks that admit at least three positive
 steady states.
 \end{abstract}

\begin{keywords}
reaction networks, mass-action kinetics, multistationarity, multistability
\end{keywords}

\begin{AMS}
 92C40, 92C45
\end{AMS}

\section{Introduction}\label{sec:intro}

This work is motivated by the multistability problem of dynamical systems arising from biochemical reaction networks (under mass-action kinetics): for which rate constants, a network has at least two stable positive steady states in the same stoichiometric compatibility class?  Multistability is a frontier topic in mathematical biology since it widely exists in
the decision-making process and switch-like behavior in cellular signaling (e.g., \cite{BF2001, XF2003, CTF2006, SN2020,SH2021}). Multistability problem is known to be a special real quantifier elimination problem
so it is challenging to solve it by the computational tools in real algebraic geometry (e.g., \cite{HTX2015, B2017}). Given a network, one way to find multistability is to look for a witness for (nondegenerate) multistationarity, i.e.,
a choice of parameters (rate constants and total constants) such that the network has at least two positive (nondegenerate) steady states. In practice, if the number of positive nondegenerate steady states  is large enough, one can usually obtain at least two stable ones (e.g., \cite{OSTT2019,TF2020,CH2021}). Deciding multistationarity or computing the witnesses for multistationarity is not easy neither but there exists a collection of nice methods (e.g., \cite{ShinarFeinberg2012,CF2012,  signs, joshi2013complete}).  For instance, one typical approach is to check if the determinant of a certain Jacobian matrix changes sign \cite{CF2005, BP,WiufFeliu_powerlaw,CFMW}. Especially, for the networks with binomial steady-state equations, deciding multistationary  can be unexpectedly simple \cite{DMST2019,SadeghimaneshFeliu2019}.


Since multistability or nondegenerate multistationarity can be lifted from small subnetworks to the corresponding large networks \cite{JS13, BP16},  criteria of (nondegenerate)
multistationarity for small networks with only one species or up to two reactions (possibly reversible) are studied in \cite{Joshi:Shiu:Multistationary, shiu-dewolff}. At the end of \cite{Joshi:Shiu:Multistationary}, the authors wonder if their results can be extended to more general networks with one-dimensional stoichiometric subpaces (note here, for a network with two reactions, if it admits multistationarity, then it has a one-dimensional stoichiometric subspace \cite{Joshi:Shiu:Multistationary}). More specifically, they proposed the following question:

\begin{question}\label{ques:6.1}\cite[Question 6.1]{Joshi:Shiu:Multistationary}
  Consider a network $G$ with a one-dimensional stoichiometric subspace. For $G$ to be multistationary, is it necessary for $G$ to have an embedded one-species network with arrow diagram $(\rightarrow,\leftarrow)$ and another with arrow diagram $(\leftarrow,\rightarrow)$? Is it sufficient?
\end{question}
It is remarkable that for the networks with one-dimensional stoichiometric subspaces, multistationarity is equivalent to nondegenerate multistationarity if the maximum number of positive steady states is finite (see Theorem \ref{thm:nc}). It is also worth mentioning that if a network with a one-dimensional stoichiometric subspace admits multistability, then it admits at least three positive steady states (e.g., \cite[Theorem 3.4]{txzs2020}).  So, it is also important to extend the results in \cite{Joshi:Shiu:Multistationary} to the networks admitting at least three positive steady states.

In this paper, we study the multistationarity problem for the networks with one-dimensional stoichiometric subspaces, and we focus on the networks admitting finitely many positive steady states. We answer Question \ref{ques:6.1} and extend the problem by the following results.
\begin{itemize}
\item[(1)]If a network admits multistationarity, then the network has an embedded one-species network with arrow diagram $(\rightarrow,\leftarrow)$ and another with arrow diagram $(\leftarrow,\rightarrow)$ (Theorem \ref{thm:Q6.1}).
    The inverse is also true if we additionally assume that a subnetwork consisting of two reactions from the original network admits at least one and finitely many positive steady states (Theorem \ref{thm:inverse}).
\item[(2)]If a network admits at least three positive steady states, then
it contains at least three bi-arrow diagrams (Theorem \ref{thm:ad3} and Corollary \ref{cry:ad3}).
\item[(3)]We completely characterize the stoichiometric coefficients of the bi-reaction networks that admit at least three positive steady states (Theorem \ref{thm:main}).
\end{itemize}
We provide Example \ref{ex:counter} for illustrating that the inverse of Theorem \ref{thm:Q6.1} might not be true in general.
We also remark that for the networks admitting infinitely many positive steady states, it is not necessary for a multistationary network to contain the pair of arrow diagrams $(\rightarrow,\leftarrow)$ and $(\leftarrow,\rightarrow)$ (see Example \ref{ex}). 
Note that Theorem \ref{thm:Q6.1} implies that if a network admits at least two positive steady states, then it contains at least two bi-arrow diagrams (see Corollary \ref{cry:ad2}). So, Theorem \ref{thm:ad3} can be viewed as a generalization of Theorem \ref{thm:Q6.1} (to see this, one can also compare Theorem \ref{thm:Q6.1} and Corollary \ref{cry:ad3}).
We remark that admitting three bi-arrow diagrams is also a necessary condition for multistability  (Corollary \ref{cry:ad3stable}).
Theorem \ref{thm:main} shows that
the inverse of Theorem \ref{thm:ad3} is not true in general.
The proof of Theorem \ref{thm:main} is constructive, which provide one way for getting witnesses for three positive steady states.
Theorem \ref{thm:main} also shows that it is possible for us to enumerate the bi-reaction networks admitting at least three positive steady states.
From these networks, as a future direction,  one might characterize  the bi-reaction networks admitting multistability.

The rest of this paper is organized as follows.
In Section~\ref{sec:back}, we recall the basic definitions and notions for the
mass-action kinetics systems  arising  from reaction networks.
In Section~\ref{sec:dim1}, we prepare the steady-state system augmented with the conservation laws for the networks with one-dimensional stoichiometric subspaces. We also make some assumptions for the rest of the discussion.
In Section~\ref{sec:multi}, we prove a necessary  (Theorem \ref{thm:Q6.1}) and a sufficient condition (Theorem \ref{thm:inverse}) for a network with a one-dimensional stoichiometric subspace to admit multistationarity.
In Section~\ref{sec:nec}, we prove a necessary condition (Theorem \ref{thm:ad3}) for a network with a one-dimensional stoichiometric subspace to admit  at least three positive steady states.
In Section~\ref{sec:bi}, we completely characterize the bi-reaction networks that admit at least three positive steady states (Theorem \ref{thm:main}).
Finally, we end this paper with some future directions inspired by  the main results, see Section \ref{sec:dis}.

\section{Background}\label{sec:back}

\subsection{Chemical reaction networks}\label{sec:pre}



In this paper, we follow the standard notions on reaction networks used in \cite{txzs2020}, see \cite{CFMW, Joshi:Shiu:Multistationary} for more details.
A \defword{reaction network} $G$  (or \defword{network} for short) consists of a set of $m$ reactions:
\begin{align}\label{eq:network}
\alpha_{1j}X_1 +
 \dots +
\alpha_{sj}X_s
~ \xrightarrow{\kappa_j} ~
\beta_{1j}X_1 +
 \dots +
\beta_{sj}X_s,
 \;
    {\rm for}~
	j=1,2, \ldots, m,
\end{align}
where $X_1, \ldots, X_s$ denote $s$ \defword{species},  the \defword{stoichiometric coefficients} $\alpha_{ij}$ and $\beta_{ij}$ are non-negative integers, each $\kappa_j \in \mathbb R_{>0}$ is a \defword{rate constant} corresponding to the
$j$-th reaction,
and
we assume that
\begin{align}\label{eq:netcon}
\text{for every}\; j\in \{1, \ldots, m\},\; (\alpha_{1j},\ldots,\alpha_{sj})\neq (\beta_{1j},\ldots,\beta_{sj}).
\end{align}
The
\defword{stoichiometric matrix} of
$G$,
denoted by ${\mathcal N}$, is the
 $s\times m$ matrix with
$(i, j)$-entry equal to $\beta_{ij}-\alpha_{ij}$.
The \defword{stoichiometric subspace}, denoted by $S$, is the real vector space spanned by the column vectors of ${\mathcal N}$.

The concentrations of the species $X_1,X_2, \ldots, X_s$ are denoted by $x_1, x_2, \ldots, x_s$, respectively. Note that
$x_i$ can be considered as a function in the time variable.
Under the assumption of mass-action kinetics, we describe how these concentrations change  in time by following system of ordinary differential equations
(ODEs):
\begin{align}\label{eq:sys}
\dot{x}~=~(f_1(\kappa; x), \ldots, f_s(\kappa; x))^{\top}~:=~{\mathcal N}\cdot \begin{pmatrix}
\kappa_1 \, x_1^{\alpha_{11}}
		x_2^{\alpha_{21}}
		\cdots x_s^{\alpha_{s1}} \\
\kappa_2 \, x_1^{\alpha_{12}}
		x_2^{\alpha_{22}}
		\cdots x_s^{\alpha_{s2}} \\
		\vdots \\
\kappa_m \, x_1^{\alpha_{1m}}
		x_2^{\alpha_{2m}}
		\cdots x_s^{\alpha_{sm}} \\
\end{pmatrix}~,
\end{align}
where $x$ denotes the vector $(x_1, x_2, \ldots, x_s)$,
and $\kappa$ denotes the vector $(\kappa_1, \ldots, \kappa_m)$.
 Note that for every $i\in \{1, \ldots, s\}$, $f_{i}(\kappa;x)$ is a polynomial  in $\mathbb Q[\kappa, x]$.

A \defword{conservation-law matrix} of $G$, denoted by $W$, is any row-reduced $d\times s$ matrix (here, $d:=s-{\rm rank}({\mathcal N})$), whose rows form a basis of $S^{\perp}$. Note that ${\rm rank}(W)=d$. Especially, if the stoichiometric subspace of $G$ is one-dimensional, then
${\rm rank}({\mathcal N})=1$ and ${\rm rank}(W)=s-1$.
Note that the system~\eqref{eq:sys} satisfies $W \dot x =0$,  and 
any trajectory $x(t)$ beginning at a nonnegative vector $x(0)=x^0 \in
\mathbb{R}^s_{> 0}$ remains, for all positive time,
 in the following \defword{stoichiometric compatibility class} with respect to the  \defword{total-constant vector} $c:= W x^0 \in {\mathbb R}^d$:
\begin{align}\label{eq:pc}
{\mathcal P}_c~:=~ \{x\in {\mathbb R}_{\geq 0}^s \mid Wx=c\}.~
\end{align}

\subsection{Multistationarity and multistability}\label{sec:mm}
For a given rate-constant vector $\kappa\in \mathbb{R}_{>0}^m$, a \defword{steady state} 
of~\eqref{eq:sys} is a concentration vector
$x^* \in \mathbb{R}_{\geq 0}^s$ such that $f_1(\kappa, x^*)=\cdots=f_s(\kappa, x^*)=0$, where  $f_1, \ldots, f_s$ are on the
right-hand side of the
ODEs~\eqref{eq:sys}.
If all coordinates of a steady state $x^*$ are strictly positive (i.e., $x^*\in \mathbb{R}_{> 0}^s$), then we call $x^*$ a \defword{positive steady state}.
We say a steady state $x^*$ is \defword{nondegenerate} if
${\rm im}\left({\rm Jac}_f (x^*)|_{S}\right)=S$,
where ${\rm Jac}_f(x^*)$ denotes the Jacobian matrix of $f$, with respect to $x$, at $x^*$.
A steady state $x^*$ is \defword{exponentially stable} (or simply \defword{stable})
if it is nondegenerate, and  all non-zero eigenvalues of ${\rm Jac}_f(x^*)$ have negative real parts.
Note that if a steady state is exponentially stable, then it is locally asymptotically stable \cite{P2001}.




Suppose $N\in {\mathbb Z}_{\geq 0}$. We say a  network  \defword{admits $N$ (nondegenerate) positive steady states}  if there exist a rate-constant
vector $\kappa\in  \mathbb{R}_{>0}^m$ and a total-constant vector $c\in  \mathbb{R}^d$ such that it has $N$ (nondegenerate) positive steady states  in the stoichiometric compatibility class ${\mathcal P}_c$.
Similarly, we say a  network  \defword{admits $N$ stable positive steady states}  if there exist a rate-constant
vector $\kappa\in  \mathbb{R}_{>0}^m$ and a total-constant vector $c\in  \mathbb{R}^d$ such that  it has $N$ stable positive steady states  in ${\mathcal P}_c$.

The \defword{maximum number of positive steady states} of a network $G$ is
{\footnotesize
\[cap_{pos}(G)\;:=\;\max\{N\in {\mathbb Z}_{\geq 0}\cup \{+\infty\}|G \;\text{admits}\; N\; \text{positive steady states}\}.\]
}
Similarly, we define
{\footnotesize
\[cap_{nondeg}(G)\;:=\;\max\{N\in {\mathbb Z}_{\geq 0}\cup \{+\infty\}|G \;\text{admits}\; N\; \text{nondegenerate positive steady states}\}\]
}
and
{\footnotesize
\[cap_{stab}(G)\;:=\;\max\{N\in {\mathbb Z}_{\geq 0}\cup \{+\infty\}|G \;\text{admits}\; N\; \text{stable positive steady states}\}.\]
}
We say a network admits \defword{multistationarity} if $cap_{pos}(G)\geq 2$.
We say a network admits \defword{nondegenerate multistationarity} if $cap_{nondeg}(G)\geq 2$.
We say a network admits \defword{multistability} if $cap_{stab}(G)\geq 2$.
\begin{theorem}\label{thm:nc}\cite[Theorem 6.1]{txzs2020}
Given a network $G$ with a one-dimensional stoichiometric subspace,
if $cap_{pos}(G)<+\infty$, then $cap_{nondeg}(G)=cap_{pos}(G)$.
\end{theorem}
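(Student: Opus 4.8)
The plan is to reduce everything to counting real roots of one scalar function along each compatibility class. Since $\mathrm{rank}(\mathcal N)=1$, fix a nonzero $v$ spanning $S$ and factor the rank-one matrix as $\mathcal N=v\,w^{\top}$ with $w\in\mathbb R^m$; the right-hand side of \eqref{eq:sys} then factors as $f(\kappa;x)=v\,g(\kappa;x)$, where, in multi-index notation $x^{\alpha_j}:=x_1^{\alpha_{1j}}\cdots x_s^{\alpha_{sj}}$, the scalar polynomial is $g(\kappa;x)=\sum_{j=1}^m w_j\kappa_j\,x^{\alpha_j}$. Because $\ker W=S=\mathbb R v$, each class $\mathcal P_c$ is a line; choosing $p$ with $Wp=c$ I parametrize $\mathcal P_c\cap\mathbb R^s_{>0}$ by $x(t)=p+tv$, $t\in I$ for an open interval $I\subseteq\mathbb R$, and set $\phi(t):=g(\kappa;x(t))$, so positive steady states in $\mathcal P_c$ are exactly the roots of $\phi$ in $I$. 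From $f=v\,g$ one computes $\mathrm{Jac}_f(x)=v\,\nabla_x g(x)^{\top}$, whence $\mathrm{Jac}_f(x)\,u=(\nabla_x g(x)\cdot u)\,v$ for $u\in S$; thus $\mathrm{im}\bigl(\mathrm{Jac}_f(x(t^*))|_S\bigr)=S$ iff $\nabla_x g(x(t^*))\cdot v=\phi'(t^*)\neq0$, i.e.\ a positive steady state is nondegenerate exactly when the corresponding root of $\phi$ is simple. Hence $cap_{pos}(G)$ and $cap_{nondeg}(G)$ are, respectively, the maxima over $(\kappa,c)$ of the numbers of roots and of simple roots of $\phi$ in $I$; the inequality $cap_{nondeg}(G)\le cap_{pos}(G)$ is immediate, and I must prove the reverse when $cap_{pos}(G)=N<\infty$. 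Finiteness forces $\phi\not\equiv0$ for every $(\kappa,c)$, so all roots are isolated.

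Next I would fix $(\kappa^*,c^*)$ realizing $N$ distinct roots $t_1<\dots<t_N$ of $\phi^*$ in $I$ and show that none of them has even multiplicity. At an even-multiplicity root $t_i$ the graph of $\phi^*$ touches the $t$-axis from one side while $\nabla_x g(x(t_i))\neq0$ at the (generic) smooth point $x(t_i)\in\{g=0\}$; since $c=Wx^0$ ranges over all of $\mathbb R^{s-1}$, I may translate the line $x(t)=p+tv$ in a direction $q$ with $\nabla_x g(x(t_i))\cdot q\neq0$, which to first order shifts $\phi^*$ across the axis near $t_i$ and replaces this root by two transversal crossings. The remaining $N-1$ roots persist by isolation, so the perturbed configuration has $N+1$ positive steady states, contradicting the maximality of $N$. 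Therefore every root of $\phi^*$ is sign-changing, and $\phi^*$ has exactly $N$ sign changes on $I$.

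Now I would finish by a genericity argument. The number of sign changes of $\phi$ on $I$ is lower semicontinuous in $(\kappa,c)$, so it remains $\ge N$ under a small perturbation of $(\kappa^*,c^*)$, forcing at least $N$ roots; by maximality the count is exactly $N$. For generic parameters $0$ is a regular value of $t\mapsto\phi(t)$, so these $N$ roots are simple, i.e.\ the network has $N$ nondegenerate positive steady states and $cap_{nondeg}(G)=N=cap_{pos}(G)$.

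The hard part is the genericity step. I must verify that the constrained deformation family --- the signs of the coefficients $w_j$ are fixed by the network while their magnitudes $\kappa_j>0$ and the translations via $c\in\mathbb R^{s-1}$ vary freely --- is rich enough that a nonsimple root occurs only on a nowhere-dense discriminant locus, equivalently that the parametrization $(\kappa,c)\mapsto\phi$ is submersive at enough jets for Sard's theorem to apply. A secondary gap is the translation-splitting in the second paragraph, which presupposes $\nabla_x g\neq0$ at the offending root and a usable translation direction; this can fail at a singular point of $\{g=0\}$ and would then have to be repaired by additionally deforming $\kappa$. Throughout, the hypothesis $cap_{pos}(G)<\infty$ is essential: it guarantees $\phi\not\equiv0$ so that roots are isolated, it makes the supremum $N$ attained, and it underlies both the persistence of sign changes and the local root-counting bookkeeping.
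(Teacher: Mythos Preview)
This theorem is quoted without proof from \cite[Theorem~6.1]{txzs2020}; the present paper does not supply its own argument, so there is no in-paper proof to compare against directly.

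Your reduction is correct and is exactly the framework the paper itself adopts elsewhere: writing $f=v\,g$ with $g$ scalar, parametrizing $\mathcal{P}_c\cap\mathbb{R}^s_{>0}$ as $x(t)=p+tv$, and identifying nondegeneracy of a steady state with simplicity of the corresponding root of $\phi(t)=g(\kappa;x(t))$ via $\phi'(t)=\nabla_x g(x(t))\cdot v$. Compare equations \eqref{eq:h1}--\eqref{eq:h} and the univariate reduction \eqref{eq:x1}--\eqref{eq:xi}, \eqref{eq:lmg}, which carry out the same parametrization.

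The gaps you flag are genuine, and there is one more that you underplay. In your second paragraph you argue that an even-multiplicity root $t_i$ can be split into two by translating the line, while ``the remaining $N-1$ roots persist by isolation.'' But persistence under small perturbation is guaranteed only for sign-changing (odd-multiplicity) roots; if several roots of $\phi^*$ have even multiplicity, the same translation that splits one may erase another, and the total count need not increase. One must either choose the perturbation so that $\phi$ is pushed in the same vertical direction at \emph{every} even-multiplicity root simultaneously, or argue via $\kappa$-perturbations instead. Beyond this, your self-assessment is accurate: passing from ``all roots sign-changing'' to ``all roots simple'' requires showing that the structured family $(\kappa,c)\mapsto\phi$ meets the discriminant locus $\{\mathrm{disc}_t\,\phi=0\}$ only in a nowhere-dense set, and this transversality (or, equivalently, exhibiting a single parameter value at which $\phi$ has $N$ simple roots in $I$) is the substantive content that remains to be supplied. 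As written, the proposal is a sound outline with correctly identified but unfilled gaps.
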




\section{Steady states of networks with one-dimensional stoichiometric subspaces}\label{sec:dim1}

\begin{assumption}\label{assumption1}
For any network $G$ with reactions defined in \eqref{eq:network},
by  \eqref{eq:netcon},
we know $(\alpha_{11},\ldots, \alpha_{s1})\neq (\beta_{11},\ldots, \beta_{s1})$.
 Without loss of generality, we would assume 
  $\beta_{11}-\alpha_{11}\neq 0$ throughout this paper.
\end{assumption}

 If the stoichiometric subspace of a network $G$ \eqref{eq:network} is one-dimensional, then under Assumption \ref{assumption1}, for every
$j\in \{1,\ldots,m\}$,
 there exists $\lambda_j \in {\mathbb R}$ such that
\begin{align}\label{eq:scalar}
\left(
\begin{array}{c}
\beta_{1j}-\alpha_{1j}\\
\vdots\\
\beta_{sj}-\alpha_{sj}
\end{array}
\right)\;=\;
\lambda_j\left(
\begin{array}{c}
\beta_{11}-\alpha_{11}\\
\vdots\\
\beta_{s1}-\alpha_{s1}
\end{array}
\right).
\end{align}
Note here, we have $\lambda_1=1$, and by \eqref{eq:netcon}, we have
$\lambda_j\neq 0$ for all $j\in \{2, \ldots, m\}$.

By substituting
\eqref{eq:scalar} into $f_1, \ldots, f_s$ in \eqref{eq:sys},
we have
\begin{align}\label{eq:f}
f_i \;=\;\left(\beta_{i1}-\alpha_{i1}\right) \sum\limits_{j=1}^m\lambda_j\kappa_j\prod\limits_{k=1}^s x_k^{\alpha_{kj}}, \;\; i=1, \ldots, s.
\end{align}
We define the steady-state system augmented with the conservation laws:
\begin{align}
h_1 ~&:=~ f_1\;=\;\left(\beta_{11}-\alpha_{11}\right)\sum\limits_{j=1}^m
\lambda_j\kappa_j\prod\limits_{k=1}^s x_k^{\alpha_{kj}} \label{eq:h1}\\
h_i ~&:= ~(\beta_{i1}-\alpha_{i1})x_1 - (\beta_{11}-\alpha_{11})x_i - c_{i-1}, \;\; i=2, \ldots, s.\label{eq:h}
\end{align}

Clearly, if for a rate-constant $\kappa^*$,
$x^*$ is a steady state in ${\mathcal P}_{c^*}$, then
$x^*$ is a common solution to the equations $h_1(\kappa^*, c^*, x^*)=\ldots=h_s(\kappa^*, c^*, x^*)=0$.

\section{Multistationarity}\label{sec:multi}

The main results of this section  are
Theorem \ref{thm:Q6.1} and Theorem \ref{thm:inverse}, which are proved in Sections
\ref{sec:multip}--\ref{sec:multip2}.  We first recall the definitions for embedded subnetworks and arrow diagrams, which are needed in the statement of the theorems. Then, we will answer Question \ref{ques:6.1}  by Theorem \ref{thm:Q6.1}, Example \ref{ex}, Example \ref{ex:counter}, and Theorem \ref{thm:inverse}.

\begin{definition}\cite[Definition 2.11]{Joshi:Shiu:Multistationary}
 Given a network $G$, the embedded subnetwork of $G$ is a network constructed by removing certain reactions and reactants of $G$.
\end{definition}

\begin{definition}\label{def:arrow}\cite[Definition 3.3]{Joshi:Shiu:Multistationary}
Let $G$ be a reaction network that contains only one species $X_1$. Thus, each reaction of $G$ has the form $ aX_1 \rightarrow \beta X_1$, where $ a, \beta{\ge}0 $ and $ a {\neq} \beta$. Let $\ell$ be the number of (distinct) reactant complexes, and let $a_{1}<a_{2}<...<a_{\ell}$ be their stoichiometric coefficients. The \defword{arrow diagram} of $G$, denoted by $\rho = (\rho_1,...,\rho_{\ell})$, is the element of $\{ \rightarrow,\leftarrow,\leftarrow\!\!\!\!\bullet\!\!\!\!\rightarrow \}^{\ell}$ defined by:
	\begin{equation}
		\rho_{i}=  \left\{
		\begin{array}{ll}
			\rightarrow \;\; & \text{if for all reactions $a_{i}X_1{\rightarrow}\beta X_1$ in $G$, it is the case that $\beta{>}a_i$} \\
			\leftarrow \;\; & \text{if for all reactions $a_{i}X_1{\rightarrow}\beta X_1$ in $G$, it is the case that $\beta{<}a_i$} \\
			\leftarrow\!\!\!\!\bullet\!\!\!\!\rightarrow & \text{otherwise.}
		\end{array}
		\right.
	\end{equation}
\end{definition}



\begin{theorem}\label{thm:Q6.1}
Given a network $G$ \eqref{eq:network} with a one-dimensional stoichiometric subspace, suppose $cap_{pos}(G)<+\infty$.
If the network $G$ admits multistationarity, then  $G$ has an embedded one-species network with arrow diagram $(\leftarrow, \rightarrow)$ and another with arrow diagram $(\rightarrow, \leftarrow)$.
\end{theorem}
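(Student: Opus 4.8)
The plan is to pass to the univariate reduction afforded by the conservation laws, and then to argue by contraposition, reducing the whole statement to a \emph{monotonicity} property of a single quotient of positive monomials. The one point that needs real care is that the embedded one-species network may be read off \emph{any} species $X_i$, not just $X_1$, so the combinatorial hypothesis must be translated coordinate by coordinate.

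First I would eliminate $x_2,\dots,x_s$. On a class ${\mathcal P}_c$ the equations $h_i=0$ of \eqref{eq:h} give $x_i=\mu_i x_1-d_i$, where $\mu_i:=(\beta_{i1}-\alpha_{i1})/(\beta_{11}-\alpha_{11})$ (so $\mu_1=1$) and $d_i:=c_{i-1}/(\beta_{11}-\alpha_{11})$; species with $\mu_i=0$ are constant and can be absorbed. Writing $x_1=t$, the positive steady states in ${\mathcal P}_c$ are exactly the roots, inside the open interval $\Omega:=\{t:\mu_k t-d_k>0\ \text{for all}\ k\}$, of
\[
g(t)\;=\;\sum_{j=1}^m \lambda_j\kappa_j M_j(t),\qquad M_j(t):=\prod_{k=1}^s(\mu_k t-d_k)^{\alpha_{kj}}>0 .
\]
Put $\epsilon:=\operatorname{sign}(\beta_{11}-\alpha_{11})$ and split the reactions into the ``$X_1$-up'' set $U=\{j:\epsilon\lambda_j>0\}$ and the ``$X_1$-down'' set $D=\{j:\epsilon\lambda_j<0\}$; both are nonempty whenever a positive steady state exists. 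Since $cap_{pos}(G)<+\infty$, a multistationarity witness $(\kappa,c)$ yields at least two and only finitely many roots of $g$ in $\Omega$ (Theorem~\ref{thm:nc} is available here, though finiteness alone will suffice below).

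The core is the contrapositive: assume $G$ has \emph{no} embedded one-species network with arrow diagram $(\rightarrow,\leftarrow)$. I would first record what this means. In the embedded network on species $X_i$ (defined when $\mu_i\neq0$) reaction $j$ reads $\alpha_{ij}X_i\to\beta_{ij}X_i$ and points up iff $\epsilon\lambda_j\mu_i>0$, so exhibiting $(\rightarrow,\leftarrow)$ means finding $\alpha_{ij_1}<\alpha_{ij_2}$ with $j_1$ up and $j_2$ down. Its absence \emph{for every} $i$ translates, for each $j\in U$, $j'\in D$ and each coordinate $k$, into the uniform sign match $(\alpha_{kj}-\alpha_{kj'})\,\mu_k\ge0$. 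The purpose of this bookkeeping is the computation
\[
\frac{d}{dt}\log\frac{M_j(t)}{M_{j'}(t)}\;=\;\sum_{k=1}^s(\alpha_{kj}-\alpha_{kj'})\,\frac{\mu_k}{\mu_k t-d_k},
\]
in which every denominator $\mu_k t-d_k=x_k$ is positive on $\Omega$, so each summand carries the sign of $(\alpha_{kj}-\alpha_{kj'})\mu_k\ge0$; hence each ratio $M_j/M_{j'}$ with $j\in U$, $j'\in D$ is nondecreasing on $\Omega$.

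From this the contradiction is quick. Writing $\Phi:=\big(\sum_{j\in U}|\lambda_j|\kappa_jM_j\big)\big/\big(\sum_{j'\in D}|\lambda_{j'}|\kappa_{j'}M_{j'}\big)$, the quotient-rule numerator of $\Phi'$ equals $\sum_{j\in U,\,j'\in D}|\lambda_j|\kappa_j|\lambda_{j'}|\kappa_{j'}M_jM_{j'}\big(\tfrac{M_j'}{M_j}-\tfrac{M_{j'}'}{M_{j'}}\big)\ge0$, so $\Phi$ is nondecreasing; and since $\epsilon g=(\sum_{D}|\lambda_{j'}|\kappa_{j'}M_{j'})(\Phi-1)$ with a positive prefactor, $g$ vanishes exactly where $\Phi=1$. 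A nondecreasing function can equal $1$ at two distinct points only if it is identically $1$ between them, forcing infinitely many roots and contradicting finiteness; hence at most one positive steady state, contradicting multistationarity. The $(\leftarrow,\rightarrow)$ half is identical with all inequalities reversed, making every $M_j/M_{j'}$ nonincreasing and $\Phi$ nonincreasing. I expect the main obstacle to be exactly the middle step: correctly handling the arrow diagrams across \emph{all} species simultaneously and verifying the per-coordinate sign match $(\alpha_{kj}-\alpha_{kj'})\mu_k\ge0$, together with the edge cases (a possibly bounded $\Omega$, coordinates with $\mu_k=0$, and ensuring $U,D\neq\varnothing$).
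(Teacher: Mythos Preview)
Your proof is correct and follows essentially the same approach as the paper: both argue by contraposition, translate the absence of one arrow-diagram type into the uniform sign condition $(\alpha_{kj}-\alpha_{kj'})\mu_k\ge 0$ for $j\in U$, $j'\in D$ (the paper's Lemma~\ref{lm:arrow}), and use this to force infinitely many steady states, contradicting $cap_{pos}(G)<+\infty$. The only difference is presentational: the paper fixes two steady states $x^{(1)},x^{(2)}$ and compares the ratios $A_j^{(1)}/A_j^{(2)}$ against $A_\ell^{(1)}/A_\ell^{(2)}$ discretely via a mediant inequality, whereas you parametrize continuously by $t=x_1$ and obtain the same inequality by showing $\tfrac{d}{dt}\log(M_j/M_{j'})\ge 0$ and hence that $\Phi$ is monotone---but this is the same computation integrated, and both reach the identical contradiction.
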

\begin{example}
  Consider the following network:
  $$G:\begin{aligned}
      X_1+2X_2&\rightarrow X_2\\
      2X_1&\rightarrow 3X_1+X_2\\
      2X_2&\rightarrow X_1+3X_2
    \end{aligned}$$
Let $\kappa_1=1,~\kappa_2=9,~\kappa_3=1$, and $c_1=0.5$. By solving the equations $h_1(x)=h_2(x)=0$ (see \eqref{eq:h1}--\eqref{eq:h}), we see that the network has two positive steady states: $$x^{(1)}=(0.30806, 0.80806), ~~x^{(2)}=(6.8111,7.3111). $$ So, the network admits multistationarity.

It is straightforward to check that $G$ has the embedded one-species networks
  $$\begin{aligned}
      X_1&\rightarrow 0\\
      2X_1&\rightarrow 3X_1
    \end{aligned}  ~~~~ \text{and}~~~~
    \begin{aligned}
           0&\rightarrow X_1\\
            X_1&\rightarrow 0
    \end{aligned}$$ with arrow diagrams $(\leftarrow,\rightarrow)$ and $(\rightarrow,\leftarrow)$ respectively. So, this example verifies Theorem \ref{thm:Q6.1}.
\end{example}

We remark that Theorem  \ref{thm:Q6.1} (or Corollary \ref{cry:inverse}) might not be true if $G$ admits infinitely many degenerate positive steady states,
see the following example.

  \begin{example}\label{ex}
    Consider the following network:
   $$G:\begin{aligned}
      X_1&\rightarrow 2X_1\\
      X_1&\rightarrow 0
    \end{aligned}$$
    The arrow diagram of $G$ is $(	\leftarrow\!\!\!\!\bullet\!\!\!\!\rightarrow )$, so by \cite[Theorem 3.6 2(a)]{Joshi:Shiu:Multistationary}, $G$ admits infinitely many degenerate positive steady states. Therefore, $G$ admits multistationarity but does not have any embedded one-species network with arrow diagram $(\leftarrow, \rightarrow)$ or $(\rightarrow, \leftarrow)$.
  \end{example}

We also remark that the inverse of Theorem \ref{thm:Q6.1} does not hold (see Example \ref{ex:counter}) unless we put some additional condition (e.g., see Theorem \ref{thm:inverse}).

\begin{example}\label{ex:counter}

  Consider the following network:
  $$G:\begin{aligned}
      2X_1+X_2&\rightarrow X_1\\
      X_1+2X_2&\rightarrow 2X_1+3X_2\\
      X_1+X_2&\rightarrow 0
    \end{aligned}$$

It is straightforward to check that $G$ has the embedded one-species networks
  $$\begin{aligned}
      X_1&\rightarrow 2X_1\\
         2X_1&\rightarrow X_1
    \end{aligned}  ~~~~ \text{and}~~~~
    \begin{aligned}
      X_2&\rightarrow 0\\
      2X_2&\rightarrow 3X_2
    \end{aligned}$$ with arrow diagrams  $(\rightarrow,\leftarrow)$ and $(\leftarrow,\rightarrow)$ respectively.

     For this network, we have
     $h_1(x)=x_1x_2(-\kappa_1x_1+\kappa_2x_2-\kappa_3)$, and $h_2(x)=-x_2+x_1-c_1$.  So,  the system $h_1(x)=h_2(x)=0$ has at most one positive solution for any positive $\kappa$ and for any real $c_1$. Hence, $G$ admits no multistationarity.
\end{example}


\begin{theorem}\label{thm:inverse}
Given a network $G$ \eqref{eq:network} with a one-dimensional stoichiometric subspace, 
if
\begin{itemize}
\item[(1)]a subnetwork consisting of two reactions in $G$ admits at least one and finitely many positive steady states, and
\item[(2)]$G$ has an embedded one-species network with arrow diagram $(\leftarrow, \rightarrow)$ and another with arrow diagram $(\rightarrow, \leftarrow)$,
\end{itemize}
then  the network $G$ admits multistationarity.
\end{theorem}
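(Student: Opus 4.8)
The plan is to work with the univariate reduction of the steady-state problem already set up in Section~\ref{sec:dim1}. Fix a total-constant vector $c$ and use the conservation laws \eqref{eq:h} to write each $x_i$ ($i\geq 2$) as an affine function $x_i(x_1)$ of $x_1$; then, up to the nonzero factor $\beta_{11}-\alpha_{11}$, the positive steady states in $\mathcal P_c$ are exactly the zeros of
\[
\phi_{\kappa,c}(x_1)\;:=\;\sum_{j=1}^m \lambda_j\kappa_j\prod_{k=1}^s x_k(x_1)^{\alpha_{kj}}
\]
lying in the open interval $I_c=(t_-,t_+)$ on which all $x_k(x_1)>0$. Thus it suffices to produce $\kappa\in\mathbb R_{>0}^m$ and $c$ for which $\phi_{\kappa,c}$ has at least two zeros in $I_c$, and the strategy is to create one sign change in the interior of $I_c$ and to fix opposite signs at the two ends, so that the Intermediate Value Theorem forces two roots.

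First I would extract an interior balance point from hypothesis (1). Let $\{a,b\}$ be the two reactions whose subnetwork admits at least one and finitely many positive steady states. Since this subnetwork again has a one-dimensional stoichiometric subspace, the existence together with finiteness of its positive steady states forces $\lambda_a\lambda_b<0$ and forces the two monomials $\prod_k x_k^{\alpha_{ka}}$, $\prod_k x_k^{\alpha_{kb}}$ to be non-proportional along $I_c$; otherwise the two-term equation $\lambda_a\kappa_a\prod_k x_k^{\alpha_{ka}}+\lambda_b\kappa_b\prod_k x_k^{\alpha_{kb}}=0$ would have either no positive solution or a whole curve of them. Consequently, for suitable $\kappa_a,\kappa_b$ and a suitable class, this two-term function changes sign across an interior point $x_1^{\circ}\in I_c$ where the forward and backward fluxes balance; I would then make $\kappa_a,\kappa_b$ large so that near $x_1^{\circ}$ these two terms dominate $\phi_{\kappa,c}$ and pin down its sign there.

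Next I would read off the signs of $\phi_{\kappa,c}$ at the two endpoints of $I_c$ from hypothesis (2). As $x_1$ tends to an endpoint of $I_c$, some coordinate $x_p(x_1)$ tends to $0$, and $\phi_{\kappa,c}$ is then governed by the reactions with the smallest exponent $\alpha_{pj}$ in that vanishing species; the sign of $\phi_{\kappa,c}$ near that endpoint is the common sign of $\lambda_j$ over those dominant reactions. The point of the arrow diagrams is that an arrow $\rightarrow$ or $\leftarrow$ (as opposed to $\leftarrow\!\!\!\!\bullet\!\!\!\!\rightarrow$) means that all retained reactions at a given reactant coefficient move the species the same way, so the corresponding $\lambda_j$ share one sign and this boundary sign is unambiguous, independent of the magnitudes of the rate constants. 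Using the embedded network with arrow diagram $(\rightarrow,\leftarrow)$ in one species and the one with $(\leftarrow,\rightarrow)$ in another, I would choose $c$ so that the two relevant species vanish at the two different ends of $I_c$ and so that the dominant (smallest-exponent) reactions there are exactly the arrow-diagram reactions; the two diagrams being of opposite type is what makes $\phi_{\kappa,c}$ carry the same sign at both ends, opposite to its interior sign at $x_1^{\circ}$. Finally I would keep the remaining rate constants small and of higher exponent in the vanishing species so that they spoil neither the interior sign nor the two endpoint signs, and conclude by the Intermediate Value Theorem that $\phi_{\kappa,c}$ has a zero in each of $(t_-,x_1^{\circ})$ and $(x_1^{\circ},t_+)$, giving $cap_{pos}(G)\geq 2$.

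The main obstacle is the coordination in the last step: the two embedded one-species networks generally live in two different species, so I must choose the conservation constants $c_i$ so that precisely those two species vanish at the two endpoints of $I_c$, verify that among all reactions of $G$ (not only the four distinguished ones) the dominant reaction at each endpoint is indeed furnished by an arrow diagram with the desired sign rather than by a competing reaction of equal minimal exponent and opposite sign, and guarantee that the interior balance point $x_1^{\circ}$ together with both boundary regions genuinely lie inside the positivity interval $I_c$ for one common choice of $(\kappa,c)$. A clean way to organize this would be to first settle the one-species case, where $\phi_{\kappa,c}$ is an honest univariate polynomial whose coefficient signs are read directly off the arrow diagram, so that $(\rightarrow,\leftarrow)$ and $(\leftarrow,\rightarrow)$ produce a sign word with two variations and two positive roots can be realized, and then to lift this to general $s$ through the embedding, using hypothesis (1) precisely to certify that the required sign changes survive the multivariate-to-univariate substitution as genuine roots in $I_c$ rather than being washed out as in Example~\ref{ex:counter}.
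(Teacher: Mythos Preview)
Your proposal is a plausible heuristic, but the crucial step---reading off the endpoint signs of $\phi_{\kappa,c}$ from the two arrow diagrams---does not follow from hypothesis (2) as stated, and you yourself flag this without resolving it. An embedded one-species arrow diagram $(\rightarrow,\leftarrow)$ in species $X_p$ only certifies the existence of \emph{two particular} reactions $j,\ell$ with $\alpha_{pj}<\alpha_{p\ell}$ and prescribed signs of $\beta_{p\cdot}-\alpha_{p\cdot}$; it says nothing about which reactions of $G$ achieve the global minimum of $\alpha_{p\cdot}$, and hence nothing about which terms dominate $\phi_{\kappa,c}$ as $x_p\to 0$. Your claim that ``the two diagrams being of opposite type is what makes $\phi_{\kappa,c}$ carry the same sign at both ends'' is therefore unsupported: the two diagrams live in possibly different species, the endpoints of $I_c$ are governed by whichever species' affine function vanishes first (which depends on $c$ and on the signs of $\gamma_k$), and nothing rules out a third reaction with still smaller exponent in the vanishing species and the wrong $\lambda$-sign. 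Your proposed fix---making the non-distinguished rate constants small---collides with your earlier choice to make $\kappa_a,\kappa_b$ large, since $a,b$ may well be among the competing reactions at an endpoint. These are exactly the coordination problems you list in your final paragraph, and they are not side issues: they are the whole content of the proof.

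The paper avoids the endpoint analysis altogether by working in $\kappa$-space rather than $x_1$-space. It first uses hypothesis (1) together with Lemma~\ref{lm:arrow} to locate indices $i\in\{1,\dots,t\}$, $j\in\{t+1,\dots,m\}$ and $k_1$ with $(\alpha_{k_1i}-\alpha_{k_1j})(\beta_{k_1i}-\alpha_{k_1i})<0$; if some $k_2$ gives the opposite sign, the two-reaction subnetwork is already nondegenerately multistationary by \cite[Theorem 5.1]{Joshi:Shiu:Multistationary} and one lifts via Lemma~\ref{lm:lift}. Otherwise hypothesis (2) produces a second pair $(i_2,j_2)$ with the opposite inequality, and the paper \emph{fixes two candidate steady states} $y,z$ in advance (with carefully chosen conservation constants satisfying conditions (C1)--(C2)), defines the ratios $\varphi(\kappa_1,\dots,\kappa_t)$ and $\psi(\kappa_{t+1},\dots,\kappa_m)$ of the positive and negative flux sums at $y$ versus $z$, and shows via two claims that $\varphi-\psi$ takes both signs as $\kappa$ varies; the Intermediate Value Theorem in $\kappa$ then yields a rate vector for which both $y$ and $z$ are steady states. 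The delicate case analysis (using Lemma~\ref{lm:nondegmss}) enters precisely to handle the situation where the second subnetwork $G_2^*$ fails to be nondegenerately multistationary on its own---a subtlety your sketch does not address.
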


The following corollary directly follows from Theorem \ref{thm:Q6.1}, Theorem \ref{thm:inverse} and \cite[Lemma 4.1]{Joshi:Shiu:Multistationary}, which is consistent with
\cite[Theorem 5.1]{Joshi:Shiu:Multistationary}.

\begin{corollary}\label{cry:inverse}
  Given a network $G$ \eqref{eq:network} with two reactions,  assume $cap_{pos}(G)$ $<+\infty$.
The network $G$ admits multistationarity if and only if
\begin{itemize}
\item[(1)] there exists $\lambda<0$ such that $\beta_{k1}-\alpha_{k1}=\lambda(\beta_{k2}-\alpha_{k2})$ for any $k\in \{1, \ldots, s\}$, and
\item[(2)] $G$ has an embedded one-species network with arrow diagram $(\leftarrow, \rightarrow)$ and another with arrow diagram $(\rightarrow, \leftarrow)$.
    \end{itemize}
\end{corollary}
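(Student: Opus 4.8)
The plan is to treat the corollary as the two-reaction instance of the necessary condition (Theorem \ref{thm:Q6.1}) and the sufficient condition (Theorem \ref{thm:inverse}) already established, invoking \cite[Lemma 4.1]{Joshi:Shiu:Multistationary} only to pin the network inside their hypotheses. I would first fix notation: let $v_1=(\beta_{11}-\alpha_{11},\ldots,\beta_{s1}-\alpha_{s1})^{\top}$ and $v_2=(\beta_{12}-\alpha_{12},\ldots,\beta_{s2}-\alpha_{s2})^{\top}$ be the two reaction vectors, so that $S$ is spanned by $v_1$ and $v_2$. The guiding observation is that condition (1) says exactly $v_1=\lambda v_2$ with $\lambda<0$; in the notation of \eqref{eq:scalar} this is ``$\dim S=1$ and $\lambda_2<0$,'' i.e.\ the two reactions are anti-parallel.

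For the forward implication I would assume $G$ admits multistationarity. By \cite[Lemma 4.1]{Joshi:Shiu:Multistationary} (the fact, recalled in the introduction, that a multistationary two-reaction network has a one-dimensional stoichiometric subspace) we obtain $\dim S=1$, hence $v_1=\lambda v_2$ for some $\lambda\neq 0$. Since now $\dim S=1$, $cap_{pos}(G)<+\infty$, and $G$ is multistationary, Theorem \ref{thm:Q6.1} applies and yields the two embedded one-species networks with arrow diagrams $(\leftarrow,\rightarrow)$ and $(\rightarrow,\leftarrow)$, which is condition (2). Condition (1) is then forced by (2): a length-two arrow diagram $(\leftarrow,\rightarrow)$ or $(\rightarrow,\leftarrow)$ requires two distinct reactant complexes whose reactions move the species in opposite directions, i.e.\ a species in which the corresponding entries of $v_1$ and $v_2$ have opposite signs; as $v_1=\lambda v_2$, this forces $\lambda<0$.

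For the reverse implication I would assume (1) and (2) and verify the two hypotheses of Theorem \ref{thm:inverse}. Condition (1) gives $\dim S=1$ at once. Because $G$ has only two reactions, the subnetwork required in Theorem \ref{thm:inverse}(1) is $G$ itself, so I must check that $G$ admits at least one and finitely many positive steady states. Finiteness is the standing hypothesis $cap_{pos}(G)<+\infty$. For existence, condition (1) makes $\lambda_2<0$ in \eqref{eq:scalar}, so by \eqref{eq:h1} the equation $h_1=0$ reduces, after cancelling the nonzero factor $\beta_{11}-\alpha_{11}$, to the balance $\kappa_1\prod_{k}x_k^{\alpha_{k1}}=-\lambda_2\kappa_2\prod_{k}x_k^{\alpha_{k2}}$ of two strictly positive monomials. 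Their exponent vectors must differ: if the two reactant complexes coincided, then taking $\kappa_1=-\lambda_2\kappa_2$ would make $h_1$ vanish identically on the positive orthant, giving infinitely many positive steady states and contradicting $cap_{pos}(G)<+\infty$. With distinct exponent vectors the balance has a positive solution, so $G$ admits at least one positive steady state, and Theorem \ref{thm:inverse} then gives multistationarity.

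The genuinely delicate points are the sign $\lambda<0$ in the forward direction and the existence of a positive steady state in the reverse direction; both are where the anti-parallel structure (rather than merely $\dim S=1$) is decisive, and where \cite[Lemma 4.1]{Joshi:Shiu:Multistationary} and the finiteness hypothesis do the real work. Everything else is bookkeeping matching the two-reaction data to the hypotheses of Theorems \ref{thm:Q6.1} and \ref{thm:inverse}.
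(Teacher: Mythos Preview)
Your proposal is correct and follows essentially the same approach the paper indicates: the corollary is obtained from Theorem \ref{thm:Q6.1}, Theorem \ref{thm:inverse}, and \cite[Lemma 4.1]{Joshi:Shiu:Multistationary}. Your write-up simply makes explicit the two bookkeeping points the paper leaves implicit, namely that condition (2) forces $\lambda<0$ in the forward direction and that the only two-reaction subnetwork of $G$ is $G$ itself (with $cap_{pos}(G)\ge 1$ following from $\lambda_2<0$ together with $cap_{pos}(G)<+\infty$) in the reverse direction.
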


\subsection{Proof  of Theorem \ref{thm:Q6.1}}\label{sec:multip}
\begin{assumption}\label{assumption2}
  For the scalars $\lambda_j$ defined in \eqref{eq:scalar},
  we assume that $\lambda_i>0$ for all $i\in\{1, \ldots,  t\}$ and $\lambda_i<0$ for all $i\in \{t+1, \ldots, m\}$, where $t\in \{1,\dots,m\}$.
  Note that $\lambda_1>0$. So, $t\geq 1$.
  Also, note that if $t=m$, then  we have $\lambda_i>0$ for all $i\in \{1, \ldots, m\}$.
\end{assumption}

\begin{lemma}\label{lm:arrow}
  Given a network $G$ \eqref{eq:network} with a one-dimensional stoichiometric subspace,  we have the following statements.
  \begin{itemize}
    \item[(a)] $G$ has an embedded one-species network with arrow diagram $(\rightarrow,\leftarrow)$ if and only if
    there exist $k\in \{1,\ldots, s\}$, $j\in \{1, \ldots, t\}$ and $\ell \in \{t+1, \ldots, m\}$ such that
    $(\alpha_{kj}-\alpha_{k\ell})(\beta_{kj}-\alpha_{kj})<0$.
    \item[(b)]  $G$ has an embedded one-species network with arrow diagram $(\leftarrow,\rightarrow)$ if and only if  there exist $k\in \{1,\ldots, s\}$, $j\in \{1, \ldots, t\}$ and $\ell \in \{t+1, \ldots, m\}$ such that
    $(\alpha_{kj}-\alpha_{k\ell})(\beta_{kj}-\alpha_{kj})>0$.

  \end{itemize}
\end{lemma}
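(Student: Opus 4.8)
The plan is to reduce both equivalences to a direct sign computation built on the scaling relation \eqref{eq:scalar}. For a fixed species index $k$, substituting \eqref{eq:scalar} gives $\beta_{kj}-\alpha_{kj}=\lambda_j(\beta_{k1}-\alpha_{k1})$ for every reaction $j$. By Assumption \ref{assumption2}, whenever $j\in\{1,\ldots,t\}$ and $\ell\in\{t+1,\ldots,m\}$ we have $\lambda_j>0>\lambda_\ell$; hence, provided $\beta_{k1}-\alpha_{k1}\neq 0$, the net coefficients $\beta_{kj}-\alpha_{kj}$ and $\beta_{k\ell}-\alpha_{k\ell}$ have opposite signs, so one of reactions $j,\ell$ strictly increases $X_k$ and the other strictly decreases it. This opposite-sign phenomenon is the engine of the whole argument, and each direction of each equivalence is then just a matter of matching the ordering of the reactant coefficients $\alpha_{kj},\alpha_{k\ell}$ against the prescribed arrow directions.

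For the backward direction of (a), I would start from indices $k,j,\ell$ with $(\alpha_{kj}-\alpha_{k\ell})(\beta_{kj}-\alpha_{kj})<0$ and build the embedded one-species network on $X_k$ retaining only reactions $j$ and $\ell$, namely $\alpha_{kj}X_1\to\beta_{kj}X_1$ and $\alpha_{k\ell}X_1\to\beta_{k\ell}X_1$. First I would note that the strict inequality forces $\beta_{kj}-\alpha_{kj}\neq 0$ (hence $\beta_{k1}-\alpha_{k1}\neq 0$, so both reactions remain genuine after deleting the other species) and $\alpha_{kj}\neq\alpha_{k\ell}$ (so there are exactly two distinct reactant complexes, giving a length-two arrow diagram). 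Then I would split on the sign of $\beta_{k1}-\alpha_{k1}$: if it is positive, then $j$ increases $X_k$ while $\ell$ decreases it, and the hypothesis reads $\alpha_{kj}<\alpha_{k\ell}$, so the smaller reactant carries $\rightarrow$ and the larger carries $\leftarrow$, i.e.\ the diagram is $(\rightarrow,\leftarrow)$; if it is negative, the roles of $j,\ell$ swap but the same computation again yields $(\rightarrow,\leftarrow)$. For the forward direction, I would take any embedded one-species network on some $X_k$ with arrow diagram $(\rightarrow,\leftarrow)$: by Definition \ref{def:arrow} it has exactly two distinct reactant coefficients $a_1<a_2$, with every reaction out of $a_1$ net-increasing and every reaction out of $a_2$ net-decreasing. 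Choosing one reaction $p$ out of $a_1$ and one $q$ out of $a_2$, the opposite-sign observation shows $\lambda_p,\lambda_q$ have opposite signs, so exactly one lies in $\{1,\ldots,t\}$ and the other in $\{t+1,\ldots,m\}$; relabeling the positive-$\lambda$ one as $j$ and the negative-$\lambda$ one as $\ell$ and again casing on the sign of $\beta_{k1}-\alpha_{k1}$ recovers $(\alpha_{kj}-\alpha_{k\ell})(\beta_{kj}-\alpha_{kj})<0$.

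Part (b) is handled by the identical argument with every arrow reversed: the diagram $(\leftarrow,\rightarrow)$ places the net-decreasing reaction on the smaller reactant and the net-increasing one on the larger reactant, which flips the matching between the reactant ordering and the sign of $\beta_{kj}-\alpha_{kj}$ and therefore turns the product condition into $(\alpha_{kj}-\alpha_{k\ell})(\beta_{kj}-\alpha_{kj})>0$.

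I expect the only real care to lie in the bookkeeping of the two sub-cases according to $\operatorname{sign}(\beta_{k1}-\alpha_{k1})$ together with the relabeling of $j,\ell$ by the sign of $\lambda$: one must consistently track which index ends up as the smaller reactant, so that the inequality on $\alpha_{kj}-\alpha_{k\ell}$ and the inequality on $\beta_{kj}-\alpha_{kj}$ combine with the correct overall sign. The remaining points---nontriviality of the two retained reactions after embedding and distinctness of their reactant complexes---follow immediately from the strictness of the hypothesized product inequality, so no delicate estimates are involved.
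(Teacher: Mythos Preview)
Your proposal is correct and follows essentially the same approach as the paper's proof: both rely on the scaling relation \eqref{eq:scalar} and Assumption~\ref{assumption2} to conclude that for $j\in\{1,\ldots,t\}$ and $\ell\in\{t+1,\ldots,m\}$ the net changes $\beta_{kj}-\alpha_{kj}$ and $\beta_{k\ell}-\alpha_{k\ell}$ have opposite signs, then match the ordering of $\alpha_{kj},\alpha_{k\ell}$ to the arrow directions. Your case analysis is organized around $\operatorname{sign}(\beta_{k1}-\alpha_{k1})$ while the paper's is around the ordering $\alpha_{kj}\lessgtr\alpha_{k\ell}$, but this is only a bookkeeping difference; if anything you are slightly more careful in the forward direction by explicitly noting that the arrow diagram may involve more than two reactions and selecting one from each reactant complex.
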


\begin{proof}
We only prove the part (a). The part (b) can be similarly proved.

$\Rightarrow$: If $G$ has an embedded one-species network with arrow diagram $(\rightarrow,\leftarrow)$, then we can assume this embedded one-species network is $$\begin{aligned}
  \alpha_{kj}X_k \rightarrow \beta_{kj}X_k\\
  \alpha_{k\ell}X_k \rightarrow \beta_{k\ell}X_k
\end{aligned}~,$$
where $k\in \{1,\dots,s\}$. Without loss of generality, we  assume $\lambda_j>0$, i.e., $j\in \{1,\dots,t\}$. By Definition \ref{def:arrow}, we know that $sign(\beta_{kj}-\alpha_{kj})=-sign(\beta_{k\ell}-\alpha_{k\ell})$. Thus, by \eqref{eq:scalar},  $\ell\in\{t+1,\dots,m\}$. More than that, by Definition \ref{def:arrow}, if $\alpha_{kj}>\alpha_{k\ell}$, then $\beta_{kj}-\alpha_{kj}<0$, and if $\alpha_{kj}<\alpha_{k\ell}$, then $\beta_{kj}-\alpha_{kj}>0$. Above all, we have $(\alpha_{kj}-\alpha_{k\ell})(\beta_{kj}-\alpha_{kj})<0$.

$\Leftarrow$:
Suppose there exist $k\in \{1,\ldots, s\}$, $j\in \{1, \ldots, t\}$ and $\ell \in \{t+1, \ldots, m\}$ such that
    $(\alpha_{kj}-\alpha_{k\ell})(\beta_{kj}-\alpha_{kj})<0$.
Consider the following embedded one-species network:
$$\begin{aligned}
  \alpha_{kj}X_k \rightarrow \beta_{kj}X_k\\
  \alpha_{k\ell}X_k \rightarrow \beta_{k\ell}X_k
\end{aligned}.$$
If $\alpha_{kj}>\alpha_{k\ell}$, then by the hypothesis $(\alpha_{kj}-\alpha_{k\ell})(\beta_{kj}-\alpha_{kj})<0$, we have $\beta_{kj}-\alpha_{kj}<0$.
Note that $\lambda_j\lambda_{\ell}<0$. Hence, by \eqref{eq:scalar}, we have $\beta_{k\ell}-\alpha_{k\ell}<0$.
So the arrow diagram of the network is $(\rightarrow,\leftarrow)$. Similarly, if $\alpha_{kj}<\alpha_{k\ell}$, then we can show that the arrow diagram of the above network is also $(\rightarrow,\leftarrow)$.
\end{proof}

\begin{lemma}\label{lm:t<m}
 Given a network  $G$ with a one-dimensional stoichiometric subspace,   if $cap_{pos}(G)>0$, then there exists $j\in \{1, \ldots, m\}$ such that $\lambda_j<0$, where $\lambda_j$ is the scalar defined in \eqref{eq:scalar}.
\end{lemma}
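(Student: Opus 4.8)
The plan is to argue by contradiction directly from the steady-state equation $f_1=0$, exploiting the fact that at a positive steady state every monomial and every rate constant is strictly positive. Suppose $cap_{pos}(G)>0$, so there exist a rate-constant vector $\kappa\in\mathbb{R}_{>0}^m$ and a total-constant vector $c$ admitting a positive steady state $x^*\in\mathbb{R}_{>0}^s$. I would assume toward a contradiction that no $\lambda_j$ is negative. Since the discussion following \eqref{eq:scalar} records that $\lambda_1=1$ and $\lambda_j\neq 0$ for all $j$, ruling out negativity would force $\lambda_j>0$ for every $j\in\{1,\ldots,m\}$ (equivalently, $t=m$ in the notation of Assumption \ref{assumption2}).

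First I would invoke the explicit form of $f_1$ in \eqref{eq:h1}. At the positive steady state $x^*$ we have $f_1(\kappa,x^*)=0$, and because $\beta_{11}-\alpha_{11}\neq 0$ by Assumption \ref{assumption1}, this is equivalent to the vanishing of the weighted monomial sum $\sum_{j=1}^m\lambda_j\kappa_j\prod_{k=1}^s (x_k^*)^{\alpha_{kj}}=0$.

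The key step is then a one-line positivity argument. Since $x^*\in\mathbb{R}_{>0}^s$, each monomial $\prod_{k=1}^s (x_k^*)^{\alpha_{kj}}$ is strictly positive, and each $\kappa_j$ is strictly positive by hypothesis. Hence, under the contradiction hypothesis that every $\lambda_j>0$, every summand $\lambda_j\kappa_j\prod_{k=1}^s (x_k^*)^{\alpha_{kj}}$ is strictly positive, so the whole sum is strictly positive. This contradicts the requirement that the sum equals zero, and therefore some $\lambda_j$ must be negative, as claimed.

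I do not anticipate any real obstacle: the statement is essentially an immediate consequence of the sign structure of \eqref{eq:h1} combined with the strict positivity of the coordinates of $x^*$ and of the rate constants. The only two points requiring care are to remember that $\lambda_j\neq 0$ for all $j$ (so that the failure of negativity genuinely forces positivity rather than allowing zero), and that $\beta_{11}-\alpha_{11}\neq 0$ (so that the vanishing of $f_1$ does transfer to the vanishing of the weighted monomial sum).
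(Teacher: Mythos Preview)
Your proof is correct and follows essentially the same approach as the paper's own proof, which is a one-sentence contrapositive: if all $\lambda_j>0$, then by \eqref{eq:h1} the equation $h_1(x)=0$ has no positive solutions, contradicting $cap_{pos}(G)>0$. Your version simply unpacks this argument in more detail, making explicit the roles of $\lambda_j\neq 0$ and $\beta_{11}-\alpha_{11}\neq 0$.
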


\begin{proof}
  If $\lambda_1>0,\dots, \lambda_m>0$, then by \eqref{eq:h1} , $h_1(x)=0$ has no positive solutions for $x$, which is a contradiction to  $cap_{pos}(G)>0$.
\end{proof}

{\bf Proof of Theorem \ref{thm:Q6.1}.}
\begin{proof}

Suppose that for some $\kappa\in {\mathbb R}_{>0}^m$, $G$ has two distinct positive steady states $x^{(1)}$, $x^{(2)}$  in a stoichiometric compatibility class ${\mathcal P}_c$. Without loss of generality, we assume the first coordinate of $x^{(1)}$ is larger than that of
$x^{(2)}$, i.e.,  $x_1^{(1)}>x_1^{(2)}$. Note that for the polynomial $h_1$ in \eqref{eq:h1},  we have $h_1(x^{(1)})=h_1(x^{(2)})=0$, i.e.,
\begin{align}\label{eq:h1i}
h_1(x^{(i)}) \;=\;\left(\beta_{11}-\alpha_{11}\right) \left(\sum\limits_{j=1}^m\lambda_j\kappa_j\prod\limits_{k=1}^s \left(x_k^{(i)}\right)^{\alpha_{kj}}\right)=0, \;\; i=1,2.
\end{align}
For $j=1, \ldots, m$, we define
$$ A_j^{(i)}\;:=\; \left\{
\begin{aligned}
 \lambda_j\kappa_j\prod\limits_{k=1}^s \left(x_k^{(i)}\right)^{\alpha_{kj}},&~~ \text{if}~~ j\in \{1,\dots, t\}\;(\text{i.e.},\; \lambda_j>0),  \\
 -\lambda_j\kappa_j\prod\limits_{k=1}^s \left(x_k^{(i)}\right)^{\alpha_{kj}},&~~ \text{if}~~ j\in \{t+1,\dots, m\} \;(\text{i.e.},\; \lambda_j<0).
\end{aligned}\right.
$$
By \eqref{eq:h1i}, we have
$$\sum\limits_{j=1}^tA_j^{(1)}=\sum\limits_{j=t+1}^mA_j^{(1)}~,\;\;\; \text{and}\;~
\sum\limits_{j=1}^tA_j^{(2)}=\sum\limits_{j=t+1}^mA_j^{(2)}.$$
Note that $A_j^{(i)}\neq 0$ (since $x^{(i)}\in {\mathbb R}^s_{>0}$, $\kappa_j>0$ and $\lambda_j\neq 0$). Recall Assumption \ref{assumption2} that we have $t\geq 1$, and so
$\sum\limits_{j=1}^tA_j^{(i)}\neq 0$ for $i=1, 2$.
 Note also, by Lemma \ref{lm:t<m}, if $cap_{pos}(G)>0$, then
$\{t+1, \ldots, m\}\neq \emptyset$, and so $\sum\limits_{j=t+1}^mA_j^{(i)}\neq 0$ for $i=1, 2$.
So, we have \begin{align}\label{eq:proportion}
  \frac{\sum\limits_{j=1}^tA_j^{(1)}}{\sum\limits_{j=1}^tA_j^{(2)}}
  ~=~\frac{\sum\limits_{j=t+1}^mA_j^{(1)}}{\sum\limits_{j=t+1}^mA_j^{(2)}}.
\end{align}

 Below, we prove by contradiction.
 Without loss of generality, we assume that  $G$ does not have any embedded one-species network with the arrow diagram
 $(\leftarrow,\rightarrow)$. Then,  by Lemma  \ref{lm:arrow} (b), for any $k\in \{1,\dots,s\}$,  for any $j\in \{1,\dots, t\}$ and for any $\ell\in \{t+1,\dots, m\}$, we have
 \begin{align}\label{eq:arrow}
 (\alpha_{kj}-\alpha_{k\ell})(\beta_{kj}-\alpha_{kj})\leq 0.
 \end{align}
 Note that $\lambda_j>0$. So, by \eqref{eq:scalar}, we know that $\beta_{kj}-\alpha_{kj}$ have the same sign with
 $\beta_{k1}-\alpha_{k1}$. Therefore,  for any $k\in \{1,\dots,s\}$, we have the following results.
 \begin{itemize}
  \item [(a)] If  $\beta_{kj}-\alpha_{kj}=0$, then $\beta_{k1}-\alpha_{k1}=0$. So,
  by $h_k(x^{(1)})=h_k(x^{(2)})=0$ (here, recall that $h_k$ is defined in \eqref{eq:h}), we have
  $$\frac{x_k^{(1)}}{x_k^{(2)}}=
\frac{
\frac{\beta_{k1}-\alpha_{k1}}{\beta_{11}-\alpha_{11}}x_1^{(1)}
 -\frac{c_{k-1}}{\beta_{11}-\alpha_{11}}}
 {\frac{\beta_{k1}-\alpha_{k1}}
 {\beta_{11}-\alpha_{11}}x_1^{(2)}-\frac{c_{k-1}}{\beta_{11}-\alpha_{11}}
 }=1
 ,$$
 and hence, for any $j\in \{1,\dots, t\}$ and for any $\ell\in \{t+1,\dots, m\}$,
 \begin{align}\label{eq:leqa}
\left( \frac{x_k^{(1)}}{x_k^{(2)}}\right)^{\alpha_{kj}}
 =\left( \frac{x_k^{(1)}}{x_k^{(2)}}\right)^{\alpha_{k\ell}}.
 \end{align}
  \item [(b)]If  $(\beta_{kj}-\alpha_{kj})>0$, then $\beta_{k1}-\alpha_{k1}>0$, and by \eqref{eq:arrow}, we have
  $\alpha_{kj}\leq \alpha_{k\ell}$ for any $j\in \{1,\dots, t\}$ and for any $\ell\in \{t+1,\dots, m\}$. Recall $x^{(1)}_1>x^{(2)}_1>0$. So, we have
   {\tiny
   \begin{align}\label{eq:leqb}
\left( \frac{x_k^{(1)}}{x_k^{(2)}}\right)^{\alpha_{kj}}=\left(\frac{
(\beta_{k1}-\alpha_{k1})x_1^{(1)}
-c_{k-1}}
 {(\beta_{k1}-\alpha_{k1})
 x_1^{(2)}-c_{k-1}
 }\right)^{\alpha_{kj}}
 \leq \left(
\frac{(\beta_{k1}-\alpha_{k1})x_1^{(1)}
 -c_{k-1}}
 {(\beta_{k1}-\alpha_{k1})x_1^{(2)}-c_{k-1}
 }\right)^{\alpha_{k\ell}}= \left( \frac{x_k^{(1)}}{x_k^{(2)}}\right)^{\alpha_{k\ell}}.
 \end{align}
 }
 \item [(c)]If  $(\beta_{kj}-\alpha_{kj})<0$, then $\beta_{k1}-\alpha_{k1}<0$, and by \eqref{eq:arrow}, we have
    $\alpha_{kj}\geq \alpha_{k\ell}$ for any $j\in \{1,\dots, t\}$ and for any $l\in \{t+1,\dots, m\}$. So, we still have
  {\tiny
   \begin{align}\label{eq:leqc}
\left( \frac{x_k^{(1)}}{x_k^{(2)}}\right)^{\alpha_{kj}}=\left(\frac{
(\beta_{k1}-\alpha_{k1})x_1^{(1)}
 -c_{k-1}}
 {(\beta_{k1}-\alpha_{k1})
 x_1^{(2)}-c_{k-1}
 }\right)^{\alpha_{kj}}
 \leq \left(
\frac{(\beta_{k1}-\alpha_{k1})x_1^{(1)}
 -c_{k-1}}
 {(\beta_{k1}-\alpha_{k1})x_1^{(2)}-c_{k-1}
 }\right)^{\alpha_{k\ell}}= \left( \frac{x_k^{(1)}}{x_k^{(2)}}\right)^{\alpha_{k\ell}}.
 \end{align}
 }\end{itemize}
 So, by (a), (b) and (c),  for any $j\in \{1,\dots, t\}$ and for any $\ell\in \{t+1,\dots, m\}$, we have
\begin{align}\label{eq:leq1}
 \frac{A_j^{(1)}}{A_j^{(2)}}
 = \prod\limits_{k=1}^s\left(\frac{x_k^{(1)}}{x_k^{(2)}}\right)^{\alpha_{kj}}
 \leq \prod\limits_{k=1}^s\left(\frac{x_k^{(1)}}{x_k^{(2)}}\right)^{\alpha_{k\ell}}
 =\frac{A_{\ell}^{(1)}}{A_{\ell}^{(2)}},
\end{align}
and hence,
 \begin{align}\label{eq:leq}
  \frac{\sum\limits_{j=1}^tA_j^{(1)}}{\sum\limits_{j=1}^tA_j^{(2)}}
  \leq  \max\limits_{1\leq j\leq t} \frac{A_j^{(1)}}{A_j^{(2)}}\leq
  \min\limits_{t+1\leq \ell \leq m}   \frac{A_{\ell}^{(1)}}{A_{\ell}^{(2)}}
  \leq\frac{\sum\limits_{\ell=t+1}^mA_{\ell}^{(1)}}{\sum\limits_{\ell=t+1}^mA_{\ell}^{(2)}}.
\end{align}
So by \eqref{eq:proportion}, all the non-strict inequalities in \eqref{eq:leq} should be equalities.
Hence, the non-strict inequality in \eqref{eq:leq1} is an equality, and
the non-strict inequalities in \eqref{eq:leqb} and \eqref{eq:leqc} are also equalities.
By (b) and (c),  for any $k \in \{1,\dots,s\}$ such that $\beta_{k1}-\alpha_{k1}\neq 0$, we have $\alpha_{kj}=\alpha_{k\ell}$ for any $j\in \{1,\dots, t\}$ and for any $\ell\in \{t+1,\dots, m\}$, and hence  we have $\alpha_{kj}$'s are all the same for every
$j\in \{1, \ldots, m\}$.  Then, the equality \eqref{eq:h1i} can be rewritten as
\begin{align}\label{eq:h1i'} \left(\beta_{11}-\alpha_{11}\right)\left(\sum\limits_{j=1}^m\lambda_j\kappa_j\prod\limits_{k\;\text{s.t.}\;\beta_{k1}= \alpha_{k1}} \left(\frac{c_{k-1}}{\beta_{11}-\alpha_{11}}\right)^{\alpha_{kj}}\right)\prod\limits_{k\;\text{s.t.}\;\beta_{k1}\neq \alpha_{k1}} \left(x_k^{(i)}\right)^{\alpha_{k1}}=0. 
\end{align}
Since  $\left(\beta_{11}-\alpha_{11}\right)\neq 0$ and $x^{(i)}$ is  a positive steady state, we must have $$\sum\limits_{j=1}^m\lambda_j\kappa_j\prod\limits_{k\;\text{s.t.}\;\beta_{k1}= \alpha_{k1}} \left(\frac{c_{k-1}}{\beta_{11}-\alpha_{11}}\right)^{\alpha_{kj}}=0.$$
Thus, $h_1(x)$ is equal to
\begin{align}
\left(\beta_{11}-\alpha_{11}\right)\left(\sum\limits_{j=1}^m\lambda_j\kappa_j\prod\limits_{k\;\text{s.t.}\;\beta_{k1}= \alpha_{k1}} \left(\frac{c_{k-1}}{\beta_{11}-\alpha_{11}}\right)^{\alpha_{kj}}\right)\prod\limits_{k\;\text{s.t.}\;\beta_{k1}\neq \alpha_{k1}} x_k^{\alpha_{k1}}\equiv0.
\end{align}
 Therefore, the network $G$ has infinitely  many positive steady states in ${\mathcal P}_c$, which is a contradiction to the hypothesis that
 $cap_{pos}(G)<+\infty$.
\end{proof}
\subsection{Proof  of Theorem
\ref{thm:inverse}}\label{sec:multip2}
\begin{definition}
  \cite[Difinition 2.1]{Joshi:Shiu:Multistationary} A \defword{subnetwork} of a network $G$ is a network that consists of some reactions from the network $G$.
\end{definition}
\begin{lemma}\label{lm:lift}\cite[Lemma 2.12]{Joshi:Shiu:Multistationary}
Let $G^*$ be a subnetwork of a network $G$ which has the same stoichiometric subspace as $G$. Then $cap_{nondeg}(G^*)\leq cap_{nondeg}(G)$.
\end{lemma}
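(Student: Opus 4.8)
The plan is to reduce the inequality to a single implication: if $G^*$ admits $N$ nondegenerate positive steady states, then so does $G$. Granting this, taking $N = cap_{nondeg}(G^*)$ when that number is finite (and letting $N \to \infty$ otherwise) immediately gives $cap_{nondeg}(G^*) \le cap_{nondeg}(G)$. The mechanism behind the implication is a homotopy that switches off the reactions of $G$ absent from $G^*$ by sending their rate constants to $0$; the steady-state problem for $G$ then degenerates continuously to that of $G^*$, and nondegeneracy supplies exactly the transversality needed for the steady states to survive the perturbation.

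First I would recast the differential definition of nondegeneracy as the nonsingularity of a square system. Since $f$ takes values in the fixed subspace $S$, I fix an $r \times s$ matrix $P$ with $r = {\rm rank}({\mathcal N})$ whose restriction to $S$ is an isomorphism onto $\mathbb{R}^r$, and set
\[
H(x) \;:=\; \bigl(P f(x),\; W x - c\bigr) \in \mathbb{R}^s .
\]
I claim that $x^*$ is a nondegenerate positive steady state in ${\mathcal P}_c$ precisely when $H(x^*) = 0$ and ${\rm Jac}_H(x^*)$ is invertible. Indeed, the bottom block of ${\rm Jac}_H(x^*)$ is $W$, and $\ker W = S$; hence any $v \in \ker {\rm Jac}_H(x^*)$ lies in $S$ and satisfies $P\,{\rm Jac}_f(x^*)\,v = 0$, and because ${\rm Jac}_f(x^*)v \in S$ with $P$ injective on $S$ this forces ${\rm Jac}_f(x^*)v = 0$. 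Thus $\ker {\rm Jac}_H(x^*) = \ker\bigl({\rm Jac}_f(x^*)|_S\bigr)$, and ${\rm Jac}_H(x^*)$ is invertible if and only if ${\rm im}\bigl({\rm Jac}_f(x^*)|_S\bigr) = S$. The crucial structural input is that $G$ and $G^*$ share the stoichiometric subspace $S$, hence the same $S^\perp$ and the same row-reduced matrix $W$; this lets me use one common conservation-law block for both networks.

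The perturbation is then built as follows. After reordering, let the reactions of $G^*$ be the first $p$ reactions of $G$, and fix $\kappa^*$ together with a total-constant vector $c$ realizing the $N$ nondegenerate positive steady states $x^{(1)}, \dots, x^{(N)}$ of $G^*$ in ${\mathcal P}_c$. For $\varepsilon \ge 0$ assign to $G$ the rate constants $\kappa_j = \kappa_j^*$ for $j \le p$ and $\kappa_j = \varepsilon$ for $j > p$, and write $H(\varepsilon, x)$ for the resulting augmented map. The right-hand side of \eqref{eq:sys} is smooth in $(\varepsilon, x)$, and $H(0, x)$ is exactly the augmented map of $G^*$, so $H(0, x^{(i)}) = 0$ and $\partial_x H(0, x^{(i)})$ is invertible for each $i$ by the equivalence above. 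The implicit function theorem then yields, for all sufficiently small $\varepsilon > 0$, smooth branches $x^{(i)}(\varepsilon)$ with $x^{(i)}(0) = x^{(i)}$ and $H(\varepsilon, x^{(i)}(\varepsilon)) = 0$.

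Finally I would verify that the branches inherit every required property for small $\varepsilon$: each $x^{(i)}(\varepsilon)$ stays strictly positive and the $N$ of them stay pairwise distinct (positivity is open and distinct points vary continuously), $\det \partial_x H(\varepsilon, x^{(i)}(\varepsilon)) \ne 0$ by continuity of the determinant so each branch point is nondegenerate for $G$, and the conservation-law block of $H = 0$ reads $W x^{(i)}(\varepsilon) = c$, placing all $N$ states in the single class ${\mathcal P}_c$. Hence $G$ admits $N$ nondegenerate positive steady states and $cap_{nondeg}(G) \ge N$, which finishes the argument. I expect the main obstacle to be the careful bookkeeping in the first step rather than anything conceptual: pinning down $P$ and establishing that $\partial_x H(0, x^{(i)})$ is precisely the square matrix whose nonsingularity encodes nondegeneracy, together with confirming that the shared-subspace hypothesis genuinely makes $W$ and ${\mathcal P}_c$ common to both networks so that the perturbed states remain in one compatibility class.
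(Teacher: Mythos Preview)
Your argument is correct: the recasting of nondegeneracy as invertibility of the augmented Jacobian (using that ${\rm Jac}_f$ always maps into $S$, so ${\rm Jac}_f|_S:S\to S$ is bijective iff injective), together with the implicit-function-theorem perturbation in the extra rate constants, is exactly the standard mechanism. The paper itself does not supply a proof of this lemma; it simply quotes it from \cite[Lemma~2.12]{Joshi:Shiu:Multistationary}, and your sketch is essentially the argument given there.
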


\begin{lemma}\label{lm:submss} Given a network  $G$ with a one-dimensional stoichiometric subspace,  
if there exist $i\in \{1,\dots,t\}$ and $j\in\{t+1,\dots,m\}$ such that
\begin{itemize}
\item[(1)]the subnetwork consisting of the $i$-th  and the $j$-th reactions of $G$ admits finitely many positive steady states, and
\item[(2)] there exist $k_1,k_2\in \{1,\dots,s\}$ ($k_1\neq k_2$) such that
$$(\alpha_{k_1i}-\alpha_{k_1j})(\beta_{k_1i}-\alpha_{k_1i})\cdot
(\alpha_{k_2i}-\alpha_{k_2j})(\beta_{k_2i}-\alpha_{k_2i})<0,$$
\end{itemize}
then $G$ admits multistationarity.
\end{lemma}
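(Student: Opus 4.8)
The plan is to reduce everything to the two-reaction subnetwork and then lift. Let $G^*$ be the subnetwork of $G$ consisting only of the $i$-th and $j$-th reactions. Since $\lambda_i>0$ and $\lambda_j<0$, both reaction vectors of $G^*$ are nonzero scalar multiples of $(\beta_{k1}-\alpha_{k1})_{k=1}^s$, so $G^*$ has the same one-dimensional stoichiometric subspace $S$ as $G$. By Lemma \ref{lm:lift}, $cap_{nondeg}(G^*)\le cap_{nondeg}(G)$, so it suffices to prove that $G^*$ admits two positive steady states: once $cap_{pos}(G^*)\ge 2$, hypothesis (1) ($cap_{pos}(G^*)<+\infty$) and Theorem \ref{thm:nc} applied to $G^*$ give $cap_{nondeg}(G^*)=cap_{pos}(G^*)\ge 2$, and the lift yields $cap_{pos}(G)\ge cap_{nondeg}(G)\ge 2$.

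To produce two positive steady states of $G^*$, I would fix a stoichiometric compatibility class and parametrize it. Writing $v=(\beta_{k1}-\alpha_{k1})_{k=1}^s$ and choosing a base point $x^0\in\mathbb{R}_{>0}^s$, the class is the segment $x(\mu)=x^0+\mu v$ with $x(\mu)>0$. A positive point $x(\mu)$ is a steady state of $G^*$ exactly when the net flux vanishes, which after dividing by the common factor reads $\lambda_i\kappa_i\prod_k x_k(\mu)^{\alpha_{ki}}=|\lambda_j|\kappa_j\prod_k x_k(\mu)^{\alpha_{kj}}$, i.e. $R(\mu):=\prod_k x_k(\mu)^{d_k}=\tilde C$, where $d_k:=\alpha_{ki}-\alpha_{kj}$ and $\tilde C:=|\lambda_j|\kappa_j/(\lambda_i\kappa_i)$. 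Since $\tilde C$ can be made any positive number by the choice of $\kappa_i,\kappa_j$, it is enough to choose $x^0$ so that the smooth function $R$ is \emph{not} injective on the feasible $\mu$-interval: if $R(\mu_1)=R(\mu_2)$ for feasible $\mu_1\ne\mu_2$, then setting $\tilde C=R(\mu_1)$ gives two distinct positive steady states.

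The heart of the argument is thus to arrange a sign change of $\psi(\mu):=(\log R)'(\mu)=\sum_k \frac{d_k v_k}{x_k^0+\mu v_k}$. Here condition (2) enters: by \eqref{eq:scalar} the sign of $\beta_{ki}-\alpha_{ki}=\lambda_i v_k$ equals that of $v_k$ (as $\lambda_i>0$), so the hypothesis $(\alpha_{k_1i}-\alpha_{k_1j})(\beta_{k_1i}-\alpha_{k_1i})\cdot(\alpha_{k_2i}-\alpha_{k_2j})(\beta_{k_2i}-\alpha_{k_2i})<0$ says precisely that $d_{k_1}v_{k_1}$ and $d_{k_2}v_{k_2}$ have opposite signs (in particular $v_{k_1},v_{k_2}\ne0$); say $d_{k_1}v_{k_1}>0>d_{k_2}v_{k_2}$. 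Hence $\psi$ carries one strictly positive and one strictly negative term. I would then choose $x^0$ so that, as $\mu$ approaches one endpoint of the feasible interval, the coordinate $x_{k_1}(\mu)\to 0^+$ while the remaining coordinates stay bounded away from $0$ (achieved by making $x_{k_1}^0/|v_{k_1}|$ the binding ratio at that end); then the $k_1$-term dominates and $\psi\to+\infty$. Symmetrically I would drive $x_{k_2}(\mu)\to 0^+$ at the other endpoint to force $\psi\to-\infty$, so that $\psi$ changes sign, $R$ attains an interior extremum, and $\tilde C$ slightly inside the extremal value has the required two preimages.

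The main obstacle is that these two one-sided blow-ups sit at opposite ends of the segment only when $v_{k_1}$ and $v_{k_2}$ have opposite signs; when they are aligned, both coordinates reach the boundary at the \emph{same} end and only one blow-up is directly accessible. In that case I would instead pair the accessible blow-up at the finite endpoint with the behaviour of $\psi$ at the far end of the segment, where the surviving terms decay like $\big(\sum_{k:\,v_k\ne0} d_k\big)/\mu$, selecting (via $x^0$) which of $k_1,k_2$ binds at the finite end so as to obtain the opposite sign. Equivalently, after sending the base values of the remaining coordinates to infinity, the reduced two-term flux has numerator $d_{k_1}v_{k_1}(x_{k_2}^0+\mu v_{k_2})+d_{k_2}v_{k_2}(x_{k_1}^0+\mu v_{k_1})$, linear in $\mu$ with slope $v_{k_1}v_{k_2}(d_{k_1}+d_{k_2})$, which produces an interior sign change precisely when $d_{k_1}+d_{k_2}\ne0$. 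The genuinely degenerate configuration $d_{k_1}+d_{k_2}=0$ with no further active species is exactly where $\psi$ can be driven to vanish identically for a suitable $x^0$, so $R$ becomes constant and the steady-state locus fills a whole segment; this forces $cap_{pos}(G^*)=+\infty$, contradicting hypothesis (1), and is therefore excluded. Carrying out this endpoint/asymptotic bookkeeping uniformly across the sign cases, and checking that the two preimages of $\tilde C$ genuinely lie in the open positive orthant, is the part I expect to require the most care.
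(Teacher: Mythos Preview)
Your lifting skeleton is exactly the paper's: pass to the bi-reaction subnetwork $G^*$, observe it shares the stoichiometric subspace, use hypothesis~(1) and Theorem~\ref{thm:nc} to upgrade $cap_{pos}(G^*)\ge 2$ to $cap_{nondeg}(G^*)\ge 2$, then lift via Lemma~\ref{lm:lift}. The only difference is how you obtain $cap_{pos}(G^*)\ge 2$. The paper does this in one line by citing \cite[Theorem~5.1]{Joshi:Shiu:Multistationary}: hypothesis~(2) is precisely that theorem's sufficient condition for a two-reaction network (with $\lambda_i/\lambda_j<0$) to be multistationary, so the whole lemma becomes a three-sentence proof.

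You instead re-derive that two-reaction criterion from scratch via the log-derivative $\psi$. This is a legitimate route---it is essentially how Joshi--Shiu prove their result---but your sketch does not close all cases. When $v_{k_1},v_{k_2}$ share a sign, your two ``equivalent'' arguments are not actually interchangeable: the far-end asymptotic $\psi\sim(\sum_{k:v_k\ne0}d_k)/\mu$ is only available when no species with opposite-sign $v_k$ caps the segment at a finite $\mu$, and the two-term reduction fails when $d_{k_1}+d_{k_2}=0$. Your appeal to hypothesis~(1) rules out $d_{k_1}+d_{k_2}=0$ only in the sub-case with \emph{no} further active species; when other indices $k$ have $d_kv_k\ne 0$, hypothesis~(1) need not force $cap_{pos}(G^*)=+\infty$, and your argument as written does not produce the sign change of $\psi$. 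This gap is fillable (one can bring a third active coordinate back from infinity, or switch which pair plays the role of $k_1,k_2$), but it is exactly the bookkeeping that \cite[Theorem~5.1]{Joshi:Shiu:Multistationary} already packages; invoking it, as the paper does, is both shorter and complete.
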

\begin{proof}
  Consider the bi-reaction subnetwork $G_1$ consisting of the $i$-th and the $j$-th reactions of $G$.
  By the hypothesis (2), the network $G_1$
  satisfies the sufficient condition of \cite[Theorem 5.1]{Joshi:Shiu:Multistationary}, and thus, $G_1$ admits multistationarity, i.e., $cap_{pos}(G_1)\geq 2$.
   By the hypothesis (1), we have $cap_{pos}(G_1)<+\infty$, and hence,  we deduce from Theorem \ref{thm:nc} that $cap_{nondeg}(G_1)=cap_{pos}(G_1)\geq 2$. Since Lemma \ref{lm:lift} shows that $cap_{nondeg}(G_1)\leq cap_{nondeg}(G)$, we have $cap_{pos}(G)\geq cap_{nondeg}(G) \geq 2$.
\end{proof}

\begin{lemma}\label{lm:nondegmss}\cite[Theorem 5.2]{Joshi:Shiu:Multistationary}
Let $G$ be a network \eqref{eq:network}  with two reactions.  Then $G$ is nondegenerately multistationary if and only if
\begin{itemize}
\item[(1)] there exists $\lambda<0$ such that $\beta_{k1}-\alpha_{k1}=\lambda(\beta_{k2}-\alpha_{k2})$ for any $k\in \{1, \ldots, s\}$,
  \item [(2)] in the set $\{(\alpha_{k1}-\alpha_{k2})(\beta_{k1}-\alpha_{k1})\}_{k=1}^s$ there are at least one positive and at least one negative number, and
  \item [(3)] if in the set $\{(\alpha_{k1}-\alpha_{k2})(\beta_{k1}-\alpha_{k1})\}_{k=1}^s$ there are exactly one positive and one negative number, say $(\alpha_{i1}-\alpha_{i2})(\beta_{i1}-\alpha_{i1})>0$ and $(\alpha_{j1}-\alpha_{j2})(\beta_{j1}-\alpha_{j1})<0$, then $\alpha_{i1}-\alpha_{i2}\neq -(\alpha_{j1}-\alpha_{j2})$ (here, $i,j\in \{1, \ldots,s\}$).
\end{itemize}
\end{lemma}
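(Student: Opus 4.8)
The plan is to reduce the two-reaction system to a single univariate equation and read off all three conditions from the sign pattern of one logarithmic derivative. First I would show condition~(1) is forced. If $G$ is nondegenerately multistationary it has at least one positive steady state, so the two reaction vectors cannot be linearly independent (otherwise $f={\mathcal N}R(x)$ with ${\mathcal N}$ of full column rank forces the positive rate vector $R(x)$ of \eqref{eq:sys} to vanish, which is impossible on $\mathbb{R}^s_{>0}$); hence the stoichiometric subspace is one-dimensional and $\beta_{k1}-\alpha_{k1}=\lambda(\beta_{k2}-\alpha_{k2})$ for all $k$. Moreover $\lambda<0$, since by \eqref{eq:h1} a positive root requires the two monomial coefficients to have opposite signs. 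Thus (1) holds and, after relabeling species so that $\beta_{11}-\alpha_{11}\neq 0$ (Assumption~\ref{assumption1}), the machinery \eqref{eq:h1}--\eqref{eq:h} applies with $t=1$.

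Solving the conservation laws \eqref{eq:h} gives $x_k=\gamma_k x_1+\delta_k$ with $\gamma_k=\tfrac{\beta_{k1}-\alpha_{k1}}{\beta_{11}-\alpha_{11}}$ (so $\gamma_1=1,\ \delta_1=0$) and $\delta_k$ a free parameter controlled by $c_{k-1}$. Substituting into \eqref{eq:h1}, the positive steady states in a class are exactly the feasible roots (those with every $x_k>0$) of
\[
g(x_1)\ :=\ \prod_{k=1}^{s}\bigl(\gamma_k x_1+\delta_k\bigr)^{\alpha_{k1}-\alpha_{k2}}\ =\ C,\qquad C:=\frac{-\lambda\kappa_2}{\kappa_1}>0.
\]
Writing $f_i=(\beta_{i1}-\alpha_{i1})P$ with $P=\kappa_1\prod_k x_k^{\alpha_{k1}}+\lambda\kappa_2\prod_k x_k^{\alpha_{k2}}$ and $v=(\beta_{k1}-\alpha_{k1})_k$ spanning $S$, a short computation gives $\tfrac{d}{dx_1}P=M\,(\log g)'(x_1)$ at a steady state with $M=\kappa_1\prod_k x_k^{\alpha_{k1}}>0$, and ${\rm Jac}_f(x^*)|_S$ is invertible iff $\nabla P(x^*)\cdot v\neq0$; hence a positive steady state is nondegenerate iff $(\log g)'(x_1^*)\neq0$, where
\[
(\log g)'(x_1)\ =\ \frac{1}{\beta_{11}-\alpha_{11}}\sum_{k=1}^{s}\frac{S_k}{x_k},\qquad S_k:=(\alpha_{k1}-\alpha_{k2})(\beta_{k1}-\alpha_{k1}).
\]
So nondegenerate multistationarity is equivalent to choosing the $\delta_k$ (i.e.\ $c$) and $C$ (i.e.\ $\kappa$) so that $\sum_k S_k/x_k$ changes sign on a feasible interval. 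Necessity of (2) and (3) is then immediate: if all $S_k\geq0$ or all $S_k\leq0$ the sum keeps a constant sign (each $x_k>0$), $\log g$ is strictly monotone, and each class carries at most one positive steady state; while in the case of exactly one $S_i>0$ and one $S_j<0$ (all other $S_k=0$) one has $\sum_k S_k/x_k=\tfrac{S_i x_j+S_j x_i}{x_i x_j}$, whose numerator is affine in $x_1$ with leading coefficient a nonzero multiple of $(\alpha_{i1}-\alpha_{i2})+(\alpha_{j1}-\alpha_{j2})$, so if (3) fails this numerator is a nonzero constant and no sign change can occur.

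For sufficiency I would assume (1)--(3) and build a witness. By (2) pick species $i,j$ with $S_i>0$ and $S_j<0$. The feasible interval $\{x_1:\gamma_k x_1+\delta_k>0\ \forall k\}$ has each endpoint produced by some coordinate $x_k$ vanishing there; by tuning the free shifts $\delta_k$ I would arrange that the coordinates vanishing at the two ends carry $S$-values of opposite sign, so that $\sum_k S_k/x_k$ is dominated near one end by a term $\to+\infty$ and near the other by a term $\to-\infty$, while all remaining coordinates stay bounded away from $0$. Then $(\log g)'$ changes sign, $\log g$ has an interior strict extremum, and choosing $\log C$ strictly between this extreme value and a neighbouring value yields two feasible roots $x_1^*$ with $(\log g)'(x_1^*)\neq0$, i.e.\ two nondegenerate positive steady states in one class. (Alternatively, since $g$ is then non-constant the class carries finitely many steady states, and Theorem~\ref{thm:nc} upgrades two positive steady states to two nondegenerate ones.)

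The hard part is the bookkeeping in this construction. The sign of $\sum_k S_k/x_k$ near an endpoint is governed by $\mathrm{sign}(S_{k_0})$ for the vanishing coordinate $x_{k_0}$, but whether a species can be made to vanish at the left or the right endpoint is dictated by $\mathrm{sign}(\gamma_{k_0})$; so one must check, by a case analysis on the signs of the $\gamma_k$ (and, when the interval is unbounded, on the fixed far-field coefficient $\propto\sum_k(\alpha_{k1}-\alpha_{k2})$), that a species with $S>0$ and one with $S<0$ can always be placed so as to force a sign change. When there are three or more nonzero $S_k$, or two of the same sign, the extra freedom makes this routine and (3) is vacuous; the only genuine obstruction is the degenerate two-species case in which the positive and negative contributions cancel identically, and condition (3) is precisely what excludes it. Confirming that the two produced roots are simple — equivalently that $C$ avoids the critical value — is the remaining technical point.
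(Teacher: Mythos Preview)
The paper does not prove this lemma; it is quoted verbatim from \cite[Theorem~5.2]{Joshi:Shiu:Multistationary} and used as a black box in the proof of Theorem~\ref{thm:inverse}. So there is no in-paper proof to compare against.

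That said, your reduction is exactly the machinery the paper itself develops in Section~\ref{sec:bi} for a different purpose: the substitution $x_k=\gamma_k z+d_k$ is \eqref{eq:x1}--\eqref{eq:xi}, your equation $g(x_1)=C$ is \eqref{eq:lmg}, and your logarithmic derivative $\sum_k S_k/x_k$ is, up to a positive factor, the $g'(z)$ of \eqref{eq:gd1}. So the approach is the natural one and entirely consistent with the paper's techniques.

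Your necessity argument is clean and complete. For sufficiency, the ``bookkeeping'' you flag is real work, not just bookkeeping, and is where the original proof in \cite{Joshi:Shiu:Multistationary} spends its effort. The issue is precisely the one you identify: which endpoint a species can be made to vanish at is dictated by $\mathrm{sign}(\gamma_k)$, and the far-field sign $\sum_k(\alpha_{k1}-\alpha_{k2})$ at an unbounded end is also fixed by the network --- neither can be tuned. So for each admissible sign pattern of the $\gamma_k$ and $S_k$ one must check that \emph{some} arrangement of the $\delta_k$ forces $\sum_k S_k/x_k$ to change sign on the feasible interval. In the exactly-two-nonzero case your linear numerator $S_ix_j+S_jx_i$ does vanish somewhere when (3) holds, but one still needs to verify its zero can be placed inside the feasible interval while keeping all other $x_k>0$. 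These checks all go through (and your fallback via Theorem~\ref{thm:nc} is legitimate once non-constancy is established), but the case analysis should be carried out explicitly rather than asserted.
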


{\bf Proof of Theorem \ref{thm:inverse}.}

\begin{proof}
By the hypothesis (1), there exist two indices $i,j\in \{1, \ldots, m\}$ such that
for the subnetwork consisting of $i$-th and $j$-th reactions, say $G^*$, we have
$0<cap_{pos}(G^*)<+\infty$.
Note here, if $i, j\in \{1, \ldots, t\}$ or  $i, j\in \{t+1, \ldots, m\}$, then by a similar argument with the proof of Lemma \ref{lm:t<m}, we will have $cap_{pos}(G^*)=0$.  So, since $cap_{pos}(G^*)>0$, we can assume that
$i\in \{1, \ldots, t\}$ and $j\in \{t+1, \ldots, m\}$. Also notice that
if  $(\alpha_{ki}-\alpha_{kj})(\beta_{ki}-\alpha_{ki})=0$ for any $k\in \{1, \ldots, s\}$, then by \eqref{eq:h1}--\eqref{eq:h}, we will see that
$cap_{pos}(G^*)=+\infty$. So, since $cap_{pos}(G^*)<+\infty$, there exists  $k_1\in \{1, \ldots, s\}$ such that
$(\alpha_{k_1i}-\alpha_{k_1j})(\beta_{k_1i}-\alpha_{k_1i})\neq 0$.  Without loss of generality,
we assume
\begin{align}\label{eq:k1}
(\alpha_{k_1i}-\alpha_{k_1j})(\beta_{k_1i}-\alpha_{k_1i})<0.
\end{align}
Below, we discuss two cases.

{\bf (Case 1).} If there exists $k_2\in \{1, \ldots, s\}$ such that
$$
(\alpha_{k_2i}-\alpha_{k_2j})(\beta_{k_2i}-\alpha_{k_2i})>0,$$
then by Lemma \ref{lm:submss}, we have $cap_{pos}(G)\geq 2$ and we are done.

{\bf (Case 2).} Suppose for all $k\in \{1, \ldots, s\}$, $(\alpha_{ki}-\alpha_{kj})(\beta_{ki}-\alpha_{ki})\leq 0$.
By \eqref{eq:k1} and Lemma \ref{lm:arrow} (a), $G$ has an embedded  one-species network with arrow diagram $(\rightarrow, \leftarrow)$.
By the hypothesis (2), $G$ has another embedded  one-species network with arrow diagram $(\leftarrow, \rightarrow)$.
So, by Lemma \ref{lm:arrow} (b), there exist $i_2\in \{1, \ldots, t\}$, $j_2 \in \{t+1, \ldots, m\}$ and $k_3\in \{1, \ldots, s\}$ such that
\begin{align}\label{eq:k3}
(\alpha_{k_3i_2}-\alpha_{k_3j_2})(\beta_{k_3i_2}-\alpha_{k_3i_2})>0.
\end{align}
Consider the subnetwork consisting of the $i_2$-th and the $j_2$-th reactions, say $G^*_2$.
If $cap_{nondeg}(G^*_2)\geq 2$, then by Lemma \ref{lm:lift}, we have
$$cap_{pos}(G)\geq cap_{nondeg}(G^*_2)\geq 2, $$ and we are done.
So, in the rest of the proof, we  assume that $cap_{nondeg}(G^*_2)<2$.

Without loss of generality, we assume that
 $\alpha_{k_3i_2}-\alpha_{k_3j_2}>0$.
Pick any two positive numbers $y_1$ and $z_1$ such that $y_1>z_1>0$. For $k=2, \ldots, s$, we define
\begin{align}\label{eq:yz}
 y_k\;:=\;\frac{\beta_{k1}-\alpha_{k1}}{\beta_{11}-\alpha_{11}}y_1
 +\frac{c_{k-1}}{\beta_{11}-\alpha_{11}} \;\; \text{and} \;\; z_k\;:=\;\frac{\beta_{k1}-\alpha_{k1}}{\beta_{11}-\alpha_{11}}z_1
 +\frac{c_{k-1}}{\beta_{11}-\alpha_{11}},
 \end{align}
 where $c_1, \ldots c_{s-1}$ are chosen to be  real numbers satisfying the following two conditions.
 \begin{itemize}
 \item[(C1)] 
 For for every $k\in \{2, \ldots, s\}$,
 $y_k>0$ and $z_k>0$.
 \item[(C2)] For any $k$ such that $(\beta_{ki_2}-\alpha_{ki_2})(\beta_{k_3i_2}-\alpha_{k_3i_2})>0$, we have
  $$\left\{\begin{aligned}
   & \frac{c_{k-1}}{\beta_{k 1}-\alpha_{k1}}>\frac{c_{k_3-1}}{\beta_{k_3 1}-\alpha_{k_31}}>0,& \text{if} ~ \beta_{11}-\alpha_{11}>0\\
   &\frac{c_{k_3-1}}{\beta_{k_3 1}-\alpha_{k_31}}<\frac{c_{k-1}}{\beta_{k 1}-\alpha_{k1}}<-y_1,& \text{if} ~ \beta_{11}-\alpha_{11}<0
 \end{aligned}\right.$$
 \end{itemize}
 It is straightforward to check that (C2) is consistent with (C1).
 Now we define the following two continuous functions in $\kappa$:
\begin{align}\label{eq:varphi}
\varphi(\kappa_1,\dots,\kappa_t):=\frac
{\sum\limits_{i=1}^t\lambda_i\kappa_i\prod\limits_{k=1}^s y_k^{\alpha_{ki}}}{\sum\limits_{i=1}^t\lambda_i\kappa_i\prod\limits_{k=1}^s {z_k}^{\alpha_{ki}}}~~,~~ \psi(\kappa_{t+1},\dots,\kappa_m):=\frac
{\sum\limits_{j=t+1}^m\lambda_j\kappa_j\prod\limits_{k=1}^s y_k^{\alpha_{kj}}}{\sum\limits_{j=t+1}^m\lambda_j\kappa_j\prod\limits_{k=1}^s {z_k}^{\alpha_{kj}}}.
\end{align}

{\bf Claim 1.} If for all $k\in \{1, \ldots, s\}$, we have $(\alpha_{ki}-\alpha_{kj})(\beta_{ki}-\alpha_{ki})\leq 0$, then
there exits $\kappa^{(1)}=(\kappa_1^{(1)}, \ldots, \kappa_m^{(1)})\in {\mathbb R}^m_{>0}$ such that
\begin{align}\label{eq:min<max}
 \varphi(\kappa^{(1)}_1,\dots,\kappa^{(1)}_t)<\psi(\kappa^{(1)}_{t+1},\dots,\kappa^{(1)}_m).
\end{align}

{\bf Claim 2.} If $cap_{nondeg}(G^*_2)<2$, then
there exists $\kappa^{(2)}=(\kappa_1^{(2)}, \ldots, \kappa_m^{(2)})\in {\mathbb R}^m_{>0}$ such that
\begin{align}\label{eq:max>min}
\varphi(\kappa^{(2)}_1,\dots,\kappa^{(2)}_t)>\psi(\kappa^{(2)}_{t+1},\dots,\kappa^{(2)}_m)
\end{align}
(we will prove the claims  later).

If both Claim 1 and Claim 2 are true, then
 by \eqref{eq:min<max} and \eqref{eq:max>min}, there exists $\kappa^*=(\kappa_1^*,\dots,\kappa_m^*)\in {\mathbb R}^m_{>0}$ such that
 \begin{align}\label{eq:min=max}
 \varphi(\kappa_1^*,\dots,\kappa_t^*)=\psi(\kappa_{t+1}^*,\dots,\kappa_m^*).
 \end{align}
 Now, we define two positive numbers $$B:=\sum\limits_{i=1}^t\lambda_i\kappa_i^*\prod\limits_{k=1}^s z_k^{\alpha_{ki}}~, ~ C:=-\sum\limits_{j=t+1}^m\lambda_j\kappa_j^*\prod\limits_{k=1}^s z_k^{\alpha_{kj}}.$$
 For $i\in\{1,\dots,t\}$ and $j\in \{t+1,\dots, m\}$,
 let $\kappa_i^{**}:=\kappa_i^*/B>0$, $\kappa_j^{**}:=\kappa_j^*/C>0$. Then, it is straightforward to check that
 $$\sum\limits_{i=1}^t\lambda_i\kappa_i^{**}\prod\limits_{k=1}^s {z_k}^{\alpha_{ki}}=1,\;
 \text{and}\; \sum\limits_{j=t+1}^m\lambda_j\kappa_j^{**}\prod\limits_{k=1}^s {z_k}^{\alpha_{kj}}=-1.$$
 So, we have
 $$\sum\limits_{i=1}^t\lambda_i\kappa_i^{**}\prod\limits_{k=1}^s {z_k}^{\alpha_{ki}}+\sum\limits_{j=t+1}^m\lambda_j\kappa_j^{**}\prod\limits_{k=1}^s {z_k}^{\alpha_{kj}}=0.$$
 Also, by \eqref{eq:varphi} and  \eqref{eq:min=max}, we can check that
 $$\sum\limits_{i=1}^t\lambda_i\kappa_i^{**}\prod\limits_{k=1}^s y_k^{\alpha_{ki}}+\sum\limits_{j=t+1}^m\lambda_j\kappa_j^{**}\prod\limits_{k=1}^s y_k^{\alpha_{kj}}=0.$$
Therefore, for the rate-constant vector $\kappa^{**}=(\kappa_1^{**},\dots, \kappa_m^{**})$, $G$ has at least two positive steady states $y=(y_1, \ldots, y_s)$ and $z=(z_1, \ldots, z_s)$ in ${\mathcal P}_{c^*}$, and we are done.

{\bf Proof of Claim 1.} Recall by Assumption \ref{assumption2} that $\lambda_{i}>0$ since $i\in \{1, \ldots, t\}$. So, by \eqref{eq:scalar}, $\beta_{ki}-\alpha_{ki}$ and $\beta_{k1}-\alpha_{k1}$ have the same sign. So, for all $k\in \{1,\dots,s\}$, if we have $(\alpha_{ki}-\alpha_{kj})(\beta_{ki}-\alpha_{ki})
\leq 0$, then
     $(\alpha_{ki}-\alpha_{kj})(\beta_{k1}-\alpha_{k1})\leq 0$.
      Therefore,
      {\tiny
 \begin{align}
   \left(\frac{y_k}{z_k}\right)^{\alpha_{ki}}=
   \left(\frac{
(\beta_{k1}-\alpha_{k1})y_1
 +c_{k-1}}
 {(\beta_{k1}-\alpha_{k1})
 z_1+c_{k-1}
 }\right)^{\alpha_{ki}}\leq \left(\frac{
(\beta_{k1}-\alpha_{k1})y_1
 +c_{k-1}}
 {(\beta_{k1}-\alpha_{k1})
 z_1+c_{k-1}
 }\right)^{\alpha_{kj}}=\left(\frac{y_k}{z_k}\right)^{\alpha_{kj}}.
 \end{align}
 }Note that by \eqref{eq:k1}, for $k=k1$, the above inequality is strict.
Hence,
for the rate constants $\kappa_{i}=1$, $\kappa_{j}=1$ , and $\kappa_{\ell}=0$ ($\ell\in
\{1, \ldots, m\}\backslash\{i, j\}$),
\begin{align}
\varphi(\kappa_1,\dots,\kappa_t)= \prod\limits_{k=1}^{s}\left(\frac{y_k}{z_k}\right)^{\alpha_{ki}}
 <\prod\limits_{k=1}^{s}\left(\frac{y_k}{z_k}\right)^{\alpha_{kj}} =\psi(\kappa_{t+1},\dots,\kappa_m).
\end{align} So, there exits $\kappa^{(1)}=(\kappa_1^{(1)}, \ldots, \kappa_m^{(1)})\in {\mathbb R}^m_{>0}$ such that
\eqref{eq:min<max} holds
(here, for instance, we can choose $\kappa^{(1)}_{i}=1$, $\kappa^{(1)}_{j}=1$ , and $\kappa^{(1)}_{\ell}=\epsilon$ for $\ell\in
\{1, \ldots, m\}\backslash\{i, j\}$, where $\epsilon$ is a sufficiently small positive number).

{\bf Proof of Claim 2.} Recall that $G_2^*$ is a bi-reaction network consisting of the $i_2$-th and $j_2$-th reactions in $G$, where
$i_2\in \{1, \ldots, t\}$ and $j_2\in \{t+1, \ldots, m\}$. By \eqref{eq:scalar}, we know that for all $k\in \{1, \ldots, s\}$,
$(\beta_{ki_2}-\alpha_{ki_2})=\frac{\lambda_{i_2}}{\lambda_{j_2}}(\beta_{kj_2}-\alpha_{kj_2})$, and by Assumption \ref{assumption2},
$\frac{\lambda_{i_2}}{\lambda_{j_2}}<0$. So, the condition (1) stated in Lemma \ref{lm:nondegmss} always holds for $G_2^*$.
If $cap_{nondeg}(G_2^*)<2$, then by Lemma \ref{lm:nondegmss}, for the network $G_2^*$, either the condition (2) or the condition (3) in Lemma \ref{lm:nondegmss} does not hold.

Recall that for some $k_3\in \{1, \ldots, s\}$, we have \eqref{eq:k3}: $(\alpha_{k_3i_2}-\alpha_{k_3j_2})(\beta_{k_3i_2}-\alpha_{k_3i_2})>0$. So, if the condition (2) in Lemma \ref{lm:nondegmss} does not hold for $G_2^*$, then
for all $k\in \{1, \ldots, s\}$, we have $(\alpha_{ki_2}-\alpha_{kj_2})(\beta_{ki_2}-\alpha_{ki_2})\geq 0$, and by a similar argument with the proof of the claim 1, we can prove the claim 2.

 Suppose the condition (3) in Lemma \ref{lm:nondegmss} does not hold for $G_2^*$. We can assume that there exists $k_4\in \{1, \ldots,s\}$ such that $(\alpha_{k_4i_2}-\alpha_{k_4j_2})(\beta_{k_4i_2}-\alpha_{k_4i_2})<0$ and
 \begin{align}\label{eq:cond31}
 \text{for any}\; k\in \{1, \ldots, s\}\backslash \{k_3, k_4\},\; (\alpha_{ki_2}-\alpha_{kj_2})(\beta_{ki_2}-\alpha_{ki_2})=0.
 \end{align}
 Since the condition (3) in  Lemma \ref{lm:nondegmss} does not hold, we have
 \begin{align}\label{eq:cond32}
 \alpha_{k_3i_2}-\alpha_{k_3j_2}= -(\alpha_{k_4i_2}-\alpha_{k_4j_2}).
 \end{align}
 By the proof of the claim 1, we only need to show that
 \begin{align}\label{eq:need1}
 \prod\limits_{k=1}^{s}\left(\frac{y_k}{z_k}\right)^{\alpha_{ki_2}}
 >\prod\limits_{k=1}^{s}\left(\frac{y_k}{z_k}\right)^{\alpha_{kj_2}}.
\end{align}
By \eqref{eq:cond31},
for any $k\in \{1, \ldots, s\}\backslash \{k_3, k_4\}$, we have $\left(\frac{y_k}{z_k}\right)^{\alpha_{ki_2}}=\left(\frac{y_k}{z_k}\right)^{\alpha_{kj_2}}$.
So, the inequality \eqref{eq:need1} becomes
\begin{align}
 &\left(\frac{y_{k_3}}{z_{k_3}}\right)^{\alpha_{k_3i_2}}\left(\frac{y_{k_4}}{z_{k_4}}\right)^{\alpha_{k_4i_2}}
 >\left(\frac{y_{k_3}}{z_{k_3}}\right)^{\alpha_{k_3j_2}}\left(\frac{y_{k_4}}{z_{k_4}}\right)^{\alpha_{k_4j_2}}\notag\\
 \Leftrightarrow & \left(\frac{y_{k_3}}{z_{k_3}}\right)^{\alpha_{k_3i_2}-\alpha_{k_3j_2}}\left(\frac{y_{k_4}}{z_{k_4}}\right)^{\alpha_{k_4i_2}-\alpha_{k_4j_2}}>1\label{eq:need2}
 \end{align}
Recall that
 $\alpha_{k_3i_2}-\alpha_{k_3j_2}>0$.
 By \eqref{eq:cond32}, in order to prove \eqref{eq:need2}, we only need to show
 \begin{align}
 &\left(\frac{y_{k_3}}{z_{k_3}}\right)\left(\frac{y_{k_4}}{z_{k_4}}\right)^{-1}>1 \notag\\
&  \Leftrightarrow
   \frac{(\beta_{k_3 1}-\alpha_{k_3 1})(\beta_{k_4 1}-\alpha_{k_4 1})}{(\beta_{11}-\alpha_{11})^2}\left(\frac{c_{k_4-1}}{\beta_{k_4 1}-\alpha_{k_41}}-\frac{c_{k_3-1}}{\beta_{k_3 1}-\alpha_{k_31}}\right)\left(y_1-z_1\right)>0\label{eq:main}
 \end{align}
 (here, the last equivalence follows from \eqref{eq:yz}).
 Recall that $$(\alpha_{k_3i_2}-\alpha_{k_3j_2})(\beta_{k_3 i_2}-\alpha_{k_3 i_2})>0, ~\text{and}~ (\alpha_{k_4i_2}-\alpha_{k_4j_2})(\beta_{k_4 i_2}-\alpha_{k_4 i_2})<0.$$ So, we have
 \begin{align}\label{eq:C2cond}
 \beta_{k_3 i_2}-\alpha_{k_3 i_2}>0,~\text{and}~ \beta_{k_4 i_2}-\alpha_{k_4 i_2}>0.
 \end{align}
  Note also, by Assumption \ref{assumption2}, we have $\lambda_{i_2}>0$.
  So, by \eqref{eq:scalar}, we have \begin{align}
   \beta_{k_3 1}-\alpha_{k_3 1}=\lambda_{i_2}^{-1}(\beta_{k_3 i_2}-\alpha_{k_3 i_2})>0, ~\text{and} ~\beta_{k_4 1}-\alpha_{k_4 1}=\lambda_{i_2}^{-1}(\beta_{k_4 i_2}-\alpha_{k_4 i_2})>0.\label{eq:k3k4}
 \end{align}
 Since $y_1>z_1$, we deduce from \eqref{eq:k3k4} that $$\eqref{eq:main} \Leftrightarrow \frac{c_{k_4-1}}{\beta_{k_4 1}-\alpha_{k_41}}>\frac{c_{k_3-1}}{\beta_{k_3 1}-\alpha_{k_31}}.$$
Thus, by \eqref{eq:C2cond} and (C2),  we are done with the proof.
\end{proof}


\section{A necessary condition for admitting three positive steady states}\label{sec:nec}

In this section, we will extend Theorem \ref{thm:Q6.1} to the networks admitting at least three positive steady states. We first introduce a new concept ``bi-arrow diagram". Then, we point out that Theorem \ref{thm:Q6.1} shows that multistationarity implies two bi-arrow diagrams (Corollary \ref{cry:ad2}). The main result of this section is Theorem
\ref{thm:ad3}: admitting three positive steady states implies three bi-arrow diagrams.
 Recall that multistability implies three positive steady states. So, another remarkable corollary here is that multistability implies  three bi-arrow diagrams.

First, we recall that for any network \eqref{eq:network} with a one-dimensional stoichiometric subspace, we have non-zero
scalars $\lambda_i$ defined in \eqref{eq:scalar}.
By Assumption \ref{assumption2},    $t$ is the index in $\{1, \ldots, m\}$ such that  $\lambda_i>0$ for all $i\in\{1, \ldots,  t\}$ and $\lambda_i<0$ for all $i\in \{t+1, \ldots, m\}$.

\begin{definition}\label{def:ad}
(1) For a network $G$ with a one-dimensional stoichiometric subspace, if there exist $k\in \{1,\dots, s\},~ i \in \{1,\dots, t\}, $ and $j\in \{t+1,\dots,m\}$ such that  $(\alpha_{ki}-\alpha_{kj})(\beta_{ki}-\alpha_{ki})\neq0$, then we say $G$ has a \defword{bi-arrow diagram}.

(2) $Ad(G):=|\{(k, i, j)\in\{1,\dots, s\}\times \{1,\dots, t\} \times \{t+1,\dots,m\}|(\alpha_{ki}-\alpha_{kj})(\beta_{ki}-\alpha_{ki})\neq0\}|$, i.e., $Ad(G)$ denotes
the number of bi-arrow diagrams the network $G$ has.
\end{definition}

\begin{corollary}\label{cry:ad2}Given a network $G$ with a one-dimensional stoichiometric subspace,
if $2\leq cap_{pos}(G)<+\infty$, then $Ad(G)\geq 2$.
\end{corollary}
\begin{proof}
The conclusion directly follows from Theorem \ref{thm:Q6.1}, Lemma \ref{lm:arrow} and Definition \ref{def:ad} (2).
\end{proof}

\begin{theorem}\label{thm:ad3}
 Given a network $G$ with a one-dimensional stoichiometric subspace, if $3\leq cap_{pos}(G)<+\infty$, then $Ad(G)\geq 3$.
\end{theorem}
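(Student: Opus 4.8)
The plan is to argue by contradiction. Since Corollary~\ref{cry:ad2} already yields $Ad(G)\ge 2$, it suffices to show that $Ad(G)=2$ is incompatible with $3\le cap_{pos}(G)<+\infty$. The first step is to collapse the augmented system \eqref{eq:h1}--\eqref{eq:h} into one equation in one variable: solving $h_i=0$ for $i\ge 2$ gives $x_i=a_iu+b_i$ with $u:=x_1$, $a_i:=\frac{\beta_{i1}-\alpha_{i1}}{\beta_{11}-\alpha_{11}}$, $b_i:=-\frac{c_{i-1}}{\beta_{11}-\alpha_{11}}$, and $a_1=1,b_1=0$. Substituting into \eqref{eq:h1}, the positive steady states in $\mathcal P_c$ are exactly the zeros, in the open interval $I:=\{u>0:a_ku+b_k>0\ \forall k\}$, of
\[
g(u)\;:=\;\sum_{j=1}^m\lambda_j\kappa_j\,M_j(u),\qquad M_j(u):=\prod_{k=1}^s (a_ku+b_k)^{\alpha_{kj}} .
\]
Here $cap_{pos}(G)<+\infty$ forces $g\not\equiv 0$ on $I$. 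Calling $k$ \emph{active} when $a_k\ne0$ (species $1$ is active), the factor $(a_ku+b_k)^{\alpha_{kj}}=b_k^{\alpha_{kj}}$ of an inactive species is a positive constant that I absorb into $\kappa_j$; after this only active species occur in $g$, and by Definition~\ref{def:ad} every bi-arrow triple $(k,i,j)$ has $k$ active. Thus the hypothesis reads: for some $\kappa,c$ the function $g$ has at least three zeros in $I$ while $g\not\equiv0$, and I must derive a contradiction from $Ad(G)=2$.

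Next I would set $S_+:=\sum_{i\le t}\lambda_i\kappa_iM_i>0$, $S_-:=-\sum_{j>t}\lambda_j\kappa_jM_j>0$ on $I$ (using Assumption~\ref{assumption2}), and $F:=\log S_+-\log S_-$, so the zeros of $g=S_+-S_-$ are the zeros of $F$. Since $\frac{M_j'}{M_j}=\sum_k\alpha_{kj}w_k$ with $w_k(u):=\frac{a_k}{a_ku+b_k}$, one gets $F'=W/(S_+S_-)$ with
\[
W\;=\;S_+'S_--S_+S_-'\;=\!\!\sum_{(k,i,j)\ \text{bi-arrow}}\!\!(\alpha_{ki}-\alpha_{kj})(\lambda_i\kappa_i)(-\lambda_j\kappa_j)\,M_iM_j\,w_k .
\]
When $Ad(G)=2$ this is a two-term sum $W=T_1+T_2$, so the zeros of $W$ are the solutions of $R:=T_1/T_2=-1$, where $R=C\prod_k(a_ku+b_k)^{\delta_k}$ for integers $\delta_k$ and a constant $C$; the opposite-sign conclusion of Theorem~\ref{thm:Q6.1} (the triples realize $(\rightarrow,\leftarrow)$ and $(\leftarrow,\rightarrow)$) forces $C<0$. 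In the \emph{same-species} case, where both triples share one active species $k_0$, every other species carries no bi-arrow, hence has a $j$-independent exponent that cancels in $R$; then $R=C(a_{k_0}u+b_{k_0})^{\gamma}$ is monotone in $u$, so $R=-1$ has at most one solution, $F'$ has at most one zero, and by Rolle $F$ has at most two zeros, a contradiction.

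For the \emph{different-species} case $k_1\ne k_2$, each species carries exactly one bi-arrow; a short combinatorial argument (comparing the four cross-pairs through a non-special row and column) shows that a unique unequal cross-pair can exist only if $\min(t,m-t)=1$. Assuming $t=1$ (the case $m-t=1$ is symmetric), I would factor $g=M_1\cdot G$: every negative reaction not carrying an arrow contributes a constant to $G$, while each of the (at most two) arrow-carrying negative reactions contributes a term $\lambda_j\kappa_j(a_{k}u+b_{k})^{\delta}$ with $\lambda_j<0$ in a \emph{single} active species. As an integer power of a positive linear form is convex, $G$ is a constant plus concave terms, hence concave on $I$ and has at most two zeros; in the degenerate sub-case where both arrows sit at the same negative reaction, $g=0$ reduces to $(a_{k_1}u+b_{k_1})^{\delta_1}(a_{k_2}u+b_{k_2})^{\delta_2}=\mathrm{const}$, a level set of a function with at most one interior critical point, again at most two zeros. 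Either way $g$ has at most two zeros in $I$, contradicting three, so $Ad(G)\ge 3$.

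The genuinely delicate step is the different-species case. There $R=T_1/T_2$ is a product of powers of \emph{two} linear forms and need not be monotone, so the Wronskian/Rolle bound alone gives only ``at most three'' zeros — exactly the gap that would allow a false counterexample. Closing it to ``at most two'' is the heart of the proof and relies on combining the combinatorial reduction $\min(t,m-t)=1$ with the convexity of integer powers of positive linear forms and the opposite-sign information of Theorem~\ref{thm:Q6.1}. The remaining work is the bookkeeping needed to verify the convexity/Descartes bounds across all sub-cases, in particular when factoring produces negative exponents $\delta$, and to confirm that the excluded limiting possibilities ($g\equiv0$ on $I$, or a constant cofactor) are ruled out by $cap_{pos}(G)<+\infty$.
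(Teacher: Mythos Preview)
Your argument is correct and organizes the proof differently from the paper. The paper stratifies by the size of $E\cap H$ (the set of species that are both ``active'' and have non-constant reactant exponents): when $|E\cap H|\ge 3$ the bound is immediate from Proposition~\ref{prop:Ad}(4); when $|E\cap H|=1$ the paper reduces via Lemma~\ref{lm:G'} to a one-species network and invokes the $T$-alternating theorem of Joshi--Shiu; when $|E\cap H|=2$ it counts directly for $2\le t\le m-2$ and, for $t\in\{1,m-1\}$, either uses Proposition~\ref{prop:Ad}(5) (three distinct exponent values) or Lemma~\ref{lm:G=2} (two each), the latter being an explicit ``power curve meets line'' / sign-of-second-derivative argument.

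Your route instead assumes $Ad(G)=2$ and branches on whether the two bi-arrow triples share the species index. Your ``same-species'' branch is exactly the paper's $|E\cap H|=1$ case, but you replace the appeal to Joshi--Shiu by a self-contained Wronskian/Rolle argument ($R$ is a single power of one linear form, hence monotone), which is a nice economy. Your ``different-species'' combinatorial reduction to $\min(t,m-t)=1$ is the same content as the paper's $2\le t\le m-2$ counting step, just phrased contrapositively. Finally, your $t=1$ analysis---concavity of $G=g/M_1$ when $j_1\neq j_2$, and the ``at most one critical point'' argument for the product $(a_{k_1}u+b_{k_1})^{\delta_1}(a_{k_2}u+b_{k_2})^{\delta_2}$ when $j_1=j_2$---matches the two cases of Lemma~\ref{lm:G=2} essentially line for line (convexity of integer powers is exactly the mechanism behind the paper's observation that $F''$ does not change sign). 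So the analytic cores coincide; what you gain is a unified Wronskian framework and independence from the external one-species result, at the cost of having to chase a few more degenerate sub-cases (e.g.\ $R$ constant, or $P$ constant) which you correctly dispatch using $cap_{pos}(G)<+\infty$.
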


\begin{corollary}\label{cry:ad3}
Given a network $G$ with a one-dimensional stoichiometric subspace, if $3\leq cap_{pos}(G)<+\infty$,  then
one of the following two statements holds.
\begin{itemize}
 \item[(1)]$G$ has two embedded one-species networks with arrow diagram $(\leftarrow, \rightarrow)$ and one with arrow diagram $(\rightarrow, \leftarrow)$.
 \item[(2)]$G$ has two embedded one-species networks with arrow diagram  $(\rightarrow, \leftarrow)$ and one with arrow diagram $(\leftarrow, \rightarrow)$.
 \end{itemize}
\end{corollary}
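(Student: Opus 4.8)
The plan is to derive this corollary directly from Theorem~\ref{thm:ad3}, Lemma~\ref{lm:arrow}, and Theorem~\ref{thm:Q6.1} by a sign-decomposition of $Ad(G)$ followed by a pigeonhole argument. First I would split the set of bi-arrow diagrams counted by $Ad(G)$ according to the sign of the defining quantity. For each triple $(k,i,j)\in\{1,\dots,s\}\times\{1,\dots,t\}\times\{t+1,\dots,m\}$ with $(\alpha_{ki}-\alpha_{kj})(\beta_{ki}-\alpha_{ki})\neq 0$, this product is either strictly negative or strictly positive; write $N_-$ and $N_+$ for the respective counts, so that $Ad(G)=N_-+N_+$. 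By Lemma~\ref{lm:arrow}(a), a triple with negative product yields an embedded one-species network with arrow diagram $(\rightarrow,\leftarrow)$, and by Lemma~\ref{lm:arrow}(b), a triple with positive product yields one with arrow diagram $(\leftarrow,\rightarrow)$. Thus $N_-$ counts embedded $(\rightarrow,\leftarrow)$ networks and $N_+$ counts embedded $(\leftarrow,\rightarrow)$ networks, these being distinct embedded subnetworks since each is determined by its retained species $X_k$ and its retained reactions $i$ and $j$.

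Next I would feed in the two available necessary conditions. Since $3\le cap_{pos}(G)<+\infty$, Theorem~\ref{thm:ad3} gives $Ad(G)\ge 3$, i.e.\ $N_-+N_+\ge 3$. Moreover $cap_{pos}(G)\ge 2$, so Theorem~\ref{thm:Q6.1} guarantees that $G$ has at least one embedded one-species network with arrow diagram $(\rightarrow,\leftarrow)$ and at least one with $(\leftarrow,\rightarrow)$; translating through Lemma~\ref{lm:arrow}, this forces $N_-\ge 1$ and $N_+\ge 1$.

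The conclusion is then a short counting step. From $N_-+N_+\ge 3$ together with $N_-\ge 1$ and $N_+\ge 1$, the two counts cannot both equal $1$, so at least one of them is $\ge 2$. If $N_+\ge 2$, then together with $N_-\ge 1$ we obtain two embedded one-species networks with arrow diagram $(\leftarrow,\rightarrow)$ and one with $(\rightarrow,\leftarrow)$, which is statement~(1); symmetrically, if $N_-\ge 2$ then $N_+\ge 1$ yields statement~(2). Since the argument is purely combinatorial once Theorem~\ref{thm:ad3} is in hand, I do not expect a serious obstacle; the only point needing a line of justification is the claim that distinct counted triples correspond to genuinely distinct embedded subnetworks, which follows from the fact that an embedded one-species network is specified by the choice of retained species and the two retained reactions.
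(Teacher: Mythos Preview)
Your proposal is correct and follows essentially the same approach as the paper's proof: both invoke Theorem~\ref{thm:Q6.1} to secure one triple of each sign, invoke Theorem~\ref{thm:ad3} to get $Ad(G)\ge 3$, and then observe that the third triple (via Lemma~\ref{lm:arrow}) furnishes a third embedded one-species network with one of the two arrow diagrams. Your explicit decomposition $Ad(G)=N_-+N_+$ packages the pigeonhole step a bit more cleanly than the paper, but the logical content is identical.
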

\begin{proof}
By Theorem \ref{thm:Q6.1}, if $2\leq cap_{pos}(G)<+\infty$, then
$G$ has an embedded one-species networks with arrow diagram $(\leftarrow, \rightarrow)$ and another with arrow diagram $(\rightarrow, \leftarrow)$. By Lemma \ref{lm:arrow}, there exist $(i_1, j_1, k_1)$ and $(i_2, j_2, k_2)$ such that
$$(\alpha_{k_1i_1}-\alpha_{k_1j_1})(\beta_{k_1i_1}-\alpha_{k_1i_1})>0, \;\text{and}\;(\alpha_{k_2i_2}-\alpha_{k_2j_2})(\beta_{k_2i_2}-\alpha_{k_2i_2})<0.$$
By Theorem \ref{thm:ad3},  if $3\leq cap_{pos}(G)<+\infty$, then $Ad(G)\geq 3$. So, by Definition \ref{def:ad}, there exists $(i_3, j_3, k_3)$, which is different from $(i_1, j_1, k_1)$ or $(i_2, j_2, k_2)$ such that
$$(\alpha_{k_3i_3}-\alpha_{k_3j_3})(\beta_{k_3i_3}-\alpha_{k_3i_3})\neq 0.$$
By Lemma \ref{lm:arrow},  the triple $(i_3, j_3, k_3)$ corresponds to a third embedded one-species network with arrow diagram $(\leftarrow, \rightarrow)$ or $(\rightarrow, \leftarrow)$.
\end{proof}

\begin{corollary}\label{cry:ad3stable}
 Given a network $G$ with a one-dimensional stoichiometric subspace, if $G$ admits multistability, then $Ad(G)\geq 3$, and one of the statements (1)--(2) in Corollary \ref{cry:ad3} holds.
\end{corollary}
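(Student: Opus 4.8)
The plan is to deduce the statement from Corollary~\ref{cry:ad3} (together with Theorem~\ref{thm:ad3}), since the genuinely structural work---that the existence of three positive steady states forces three bi-arrow diagrams, and that these necessarily arrange into one of the two arrow-diagram patterns---is already done there. What remains for me is only to pass from the hypothesis ``$G$ admits multistability'' to the hypothesis ``$3\le cap_{pos}(G)<+\infty$'' under which those results are stated.

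First I would record the easy half. A stable steady state is by definition nondegenerate, hence isolated, so multistability gives $cap_{stab}(G)\ge 2$ and therefore $cap_{nondeg}(G)\ge 2$ and $cap_{pos}(G)\ge 2$. The substantive input is the jump from two to three: by the result cited in the introduction, \cite[Theorem 3.4]{txzs2020}, a network with a one-dimensional stoichiometric subspace that admits multistability in fact admits at least three positive steady states, so $cap_{pos}(G)\ge 3$. Combining this with the paper's standing hypothesis $cap_{pos}(G)<+\infty$ yields $3\le cap_{pos}(G)<+\infty$, which is exactly the hypothesis of Theorem~\ref{thm:ad3} and Corollary~\ref{cry:ad3}. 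Applying Theorem~\ref{thm:ad3} gives $Ad(G)\ge 3$, and applying Corollary~\ref{cry:ad3} gives that one of its statements (1)--(2) holds; together these are the two assertions of the present corollary.

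The hard part will be the bridging step, and it has two layers. The first is simply quoting \cite[Theorem 3.4]{txzs2020}, whose nontrivial content is the one-dimensional topological fact that two exponentially stable (hence isolated) zeros of the vector field restricted to a compatibility class must enclose a third steady state between them. The second, more delicate layer is finiteness: multistability by itself does \emph{not} force $cap_{pos}(G)<+\infty$. For instance the single-species network with reactions $nX_1\to(n+1)X_1$ and $nX_1\to(n-1)X_1$ for $n=1,\dots,5$ can be tuned so that the steady-state polynomial in $x_1$ vanishes identically (giving $cap_{pos}=+\infty$), while a different choice of rate constants makes it $d_5\,x_1(x_1-1)(x_1-2)(x_1-3)(x_1-4)$ with $d_5<0$, which has stable states at $x_1=2$ and $x_1=4$. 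Thus, if one wishes to avoid leaning on the standing finiteness assumption, the clean fix is to localize to the compatibility class $\mathcal{P}_{c^0}$ witnessing multistability: there the stable states are isolated, so the restriction of $h_1$ to $\mathcal{P}_{c^0}$ is a not-identically-zero univariate polynomial, whence that single class contains only finitely many---but, by \cite[Theorem 3.4]{txzs2020}, at least three---positive steady states. I would then rerun the arguments of Theorem~\ref{thm:Q6.1} and Theorem~\ref{thm:ad3} with this in-class information in place of the global bound, which is legitimate because those proofs only ever use the existence of the steady states \emph{inside one class} together with $h_1\not\equiv0$ on it (indeed, the contradiction extracted in the proof of Theorem~\ref{thm:Q6.1} is precisely ``$h_1\equiv 0$ on $\mathcal{P}_c$''), and never the global value of $cap_{pos}(G)$. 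This rerun again delivers $Ad(G)\ge 3$ and the dichotomy (1)--(2), now without any global finiteness hypothesis.
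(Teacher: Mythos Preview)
Your core argument is correct and matches the paper's proof: the paper simply invokes the inequality $cap_{stab}(G)\le\frac{cap_{pos}(G)+1}{2}$ from \cite[Theorem 3.4]{txzs2020}, so $cap_{stab}(G)\ge 2$ forces $cap_{pos}(G)\ge 3$, and then Theorem~\ref{thm:ad3} and Corollary~\ref{cry:ad3} apply directly under the paper's standing finiteness hypothesis.

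Your extended discussion of how to drop the global assumption $cap_{pos}(G)<+\infty$ by localizing to the witnessing compatibility class is not in the paper (which simply works under that standing hypothesis throughout) and is not needed for the corollary as stated; it is a reasonable strengthening, and your observation that the contradiction in the proof of Theorem~\ref{thm:Q6.1} is really ``$h_1\equiv 0$ on $\mathcal P_c$'' is accurate, but carrying this through Theorem~\ref{thm:ad3} also requires localizing Lemma~\ref{lm:G'} and Lemma~\ref{lm:neq2} (the latter relies on \cite[Theorem 3.6 2(b)]{Joshi:Shiu:Multistationary}), so you would need to verify those steps work with in-class data before claiming the finiteness-free version.
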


\begin{proof}
Since we have $cap_{stab}(G)\leq \frac{cap_{pos}(G)+1}{2}$ (see \cite[Theorem 3.4]{txzs2020}),
the result follows from Theorem
\ref{thm:ad3} and Corollary \ref{cry:ad3}. 

\end{proof}

\begin{example}
  This example illustrates how to count $Ad(G)$. Consider the following network:
  $$G:\begin{aligned}
    X_1+3X_2&\rightarrow 4X_2+X_3\\
    X_2+X_3&\rightarrow  X_1
  \end{aligned}$$
For this network, we have $s=3$, $m=2$ and $t=1$, and thus $\{1, \ldots, t\}=\{1\}$, $\{t+1, \ldots, m\}=\{2\}$.
  It is straightforward to verify that $(\alpha_{11}-\alpha_{12})(\beta_{11}-\alpha_{11})=-1\neq 0$, $(\alpha_{21}-\alpha_{22})(\beta_{21}-\alpha_{21})=2\neq 0$, and $(\alpha_{31}-\alpha_{32})(\beta_{31}-\alpha_{31})=-1\neq 0$. Hence, the network has $3$ bi-arrow diagrams, i.e., $Ad(G)=3$.

  On the other hand, by \cite[Theorem 2.4]{tx2020}, we know $cap_{pos}(G)\geq 3$. So this example also verifies Theorem \ref{thm:ad3}.
\end{example}

\subsection{Proof of Theorem \ref{thm:ad3}}
Throughout this section, we assume any network $G$ mentioned in the lemmas has a one-dimensional stoichiometric subspace. Given a network $G$ \eqref{eq:network}, we first define the following notions.
\begin{definition}\label{def:adnotion}
(1) $E:=\{k\in \{1,\dots, s\}| \alpha_{k1},\dots, \alpha_{km}\; \text{are not all the same}\}.$

(2) $H:=\{k\in\{1,\dots,s\}|\beta_{k1}- \alpha_{k1}\neq 0\}$.

(3) For $F\subset \{1,\dots, s\}$, $G_F$ denotes the embedded subnetwork of $G$ obtained by removing all the species except $\{X_k|k\in F\}$. Especially, we abbreviate $G_{\{k\}}$ as $G_k$.
\end{definition}

\begin{example}
  This example illustrates Definition \ref{def:adnotion}. Consider the following network
  $$G:\;\begin{aligned}
    X_1+3X_2+X_4+X_5&\rightarrow 4X_2+X_3+X_5\\
    X_2+X_3+X_4&\rightarrow  X_1+2X_4
  \end{aligned}$$
For this network, $s=5$ and $m=2$. By Definition \ref{def:adnotion}. $E=\{1,2,3,5\}$ and $H=\{1,2,3,4\}$. More than that, $G_H$ is obtained by removing $X_5$:
  $$G_H\;:\begin{aligned}
    X_1+3X_2+X_4&\rightarrow 4X_2+X_3\\
    X_2+X_3+X_4&\rightarrow  X_1+2X_4
  \end{aligned}$$
\end{example}

\begin{proposition}\label{prop:Ad}
  We have the following statements.

  \begin{itemize}
    \item[(1)]$Ad(G)=\sum\limits_{k=1}^sAd(G_k).$
    \item[(2)]For any $k\in H$, $Ad(G_k)=|\{(i, j)\in\{1,\dots, t\} \times \{t+1,\dots,m\}|\alpha_{ki}-\alpha_{kj}\neq0\}|$.
    \item[(3)]If $cap_{pos}(G)>0$, then for any $k\in \{1,\dots, s\}$, $Ad(G_k)\geq 1$ if and only if $k\in E\cap H$.
    \item[(4)]If $cap_{pos}(G)>0$, then $Ad(G)\geq |E\cap H|$.
    \item[(5)]For any $k\in H$, if the set $\{\alpha_{k1},\dots \alpha_{km} \}$ contains at least 3 distinct values, then $Ad(G_k)\geq 2$.
  \end{itemize}

\end{proposition}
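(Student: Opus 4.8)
The plan is to prove the five statements in sequence, extracting first a structural description of the embedded one-species network $G_k$ and then reading off each item from it. The central observation is that for $k\in H$ the reaction vectors of $G_k$ are exactly $\beta_{kj}-\alpha_{kj}=\lambda_j(\beta_{k1}-\alpha_{k1})$ for $j=1,\dots,m$ by \eqref{eq:scalar}; since $k\in H$ means $\beta_{k1}-\alpha_{k1}\neq0$ and each $\lambda_j\neq0$, none of these vectors vanishes. Hence $G_k$ retains all $m$ reactions (none is trivial), its first reaction may serve as the reference reaction of \eqref{eq:scalar}, and its associated scalars are precisely the same $\lambda_j$, so $G_k$ shares the same threshold $t$ of Assumption \ref{assumption2}. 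For $k\notin H$ we instead have $\beta_{kj}-\alpha_{kj}=\lambda_j(\beta_{k1}-\alpha_{k1})=0$ for every $j$, so $G_k$ has no nontrivial reaction and $Ad(G_k)=0$.

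With this in hand, statement (2) is immediate: for $k\in H$ the factor $\beta_{ki}-\alpha_{ki}$ is always nonzero, so $(\alpha_{ki}-\alpha_{kj})(\beta_{ki}-\alpha_{ki})\neq0$ reduces to $\alpha_{ki}\neq\alpha_{kj}$, and $Ad(G_k)$ counts exactly the pairs $(i,j)\in\{1,\dots,t\}\times\{t+1,\dots,m\}$ with $\alpha_{ki}\neq\alpha_{kj}$. Statement (1) then follows by regrouping the defining triple count of $Ad(G)$ in Definition \ref{def:ad} according to its first index $k$: the inner sum over $(i,j)$ equals $Ad(G_k)$ by (2) when $k\in H$, and equals $0=Ad(G_k)$ when $k\notin H$, where every term carries the vanishing factor $\beta_{ki}-\alpha_{ki}$. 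For (3), note that $Ad(G_k)\ge1$ forces some $i,j$ with $\beta_{ki}-\alpha_{ki}\neq0$ (so $k\in H$) and $\alpha_{ki}\neq\alpha_{kj}$ (so $k\in E$); conversely, $cap_{pos}(G)>0$ gives $t<m$ by Lemma \ref{lm:t<m}, while $t\ge1$ always, so both index blocks are nonempty, and if $k\in E\cap H$ then the failure of all $\alpha_{k1},\dots,\alpha_{km}$ to be equal must already manifest across the $t$-boundary, since fixing one index on each side and using equality across the boundary would otherwise force all $\alpha_{k\cdot}$ to a common value. Statement (4) is then a one-line consequence: $Ad(G)=\sum_{k}Ad(G_k)\ge\sum_{k\in E\cap H}Ad(G_k)\ge|E\cap H|$.

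The only genuinely combinatorial step is (5), which I would reduce to a count of distinct value-pairs. Writing $L=\{\alpha_{ki}:1\le i\le t\}$ and $R=\{\alpha_{kj}:t+1\le j\le m\}$ for the left and right value sets (both nonempty, using $t\ge1$ and $\{t+1,\dots,m\}\neq\emptyset$), every unequal value-pair $(u,v)\in L\times R$ is realized by at least one index pair, and distinct value-pairs arise from disjoint index pairs, so by (2) it suffices to show $N':=|L|\,|R|-|L\cap R|\ge2$ whenever $|L\cup R|\ge3$. Setting $a=|L|$, $b=|R|$, $c=|L\cap R|\le\min(a,b)$, the hypothesis reads $a+b-c\ge3$; if $a=1$ then $N'=b-c\ge2$ directly, symmetrically if $b=1$, and if $a,b\ge2$ then $N'\ge ab-\min(a,b)\ge2$. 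This yields $Ad(G_k)\ge N'\ge2$.

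The main obstacle I anticipate is not any single inequality but the careful bookkeeping underlying the structural observation: one must verify that passing to the embedded network $G_k$ preserves the scalars $\lambda_j$ and the threshold $t$, so that $Ad(G_k)$ computed intrinsically agrees with the inner triple count of $G$, and must treat the degenerate case $k\notin H$ correctly. A secondary point requiring care is that (3) and (5) silently need $\{t+1,\dots,m\}\neq\emptyset$; this is supplied by Lemma \ref{lm:t<m} under $cap_{pos}(G)>0$ for (3)--(4), and should be stated as the standing hypothesis for (5).
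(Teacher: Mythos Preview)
Your proof is correct and follows the paper's approach closely for items (1)--(4); your structural preamble makes explicit what the paper glosses over, namely that for $k\in H$ the embedded network $G_k$ inherits the same scalars $\lambda_j$ and threshold $t$, while for $k\notin H$ every reaction becomes trivial and $Ad(G_k)=0$. For item (5) you take a genuinely different route: the paper fixes three indices realizing three distinct values of $\alpha_{k\cdot}$ and argues by cases on how many of those indices lie in $\{1,\dots,t\}$ versus $\{t+1,\dots,m\}$, pairing against $\alpha_{km}$ or $\alpha_{k1}$ as needed; you instead pass to the value sets $L,R$ and reduce to the inequality $|L|\,|R|-|L\cap R|\ge 2$ under $|L\cup R|\ge 3$. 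Your argument is more systematic and would scale to stronger lower bounds on $Ad(G_k)$ from more distinct values, while the paper's case split is shorter and avoids introducing the extra notation. You also correctly flag that (5) tacitly needs $\{t+1,\dots,m\}\neq\emptyset$; the paper's proof assumes this without comment when it invokes the index $m$ on the right-hand block.
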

\begin{proof}
(1) This conclusion  directly follows from Definition \ref{def:ad} (1)(2).

(2)
For any $k\in H$, by Definition \ref{def:adnotion} (2), $\beta_{k1}-\alpha_{k1}\neq 0$.
Note that for any
$i\in \{1, \ldots, m\}$, we have  $\beta_{ki}-\alpha_{ki}\neq0$ since $\beta_{ki}-\alpha_{ki}=\lambda_i(\beta_{k1}-\alpha_{k1})$ and $\lambda_i\neq 0$ (recall \eqref{eq:scalar}). So, the conclusion follows from Definition \ref{def:ad} (2).

(3)``$\Rightarrow$:"  If $Ad(G_k)\geq 1$, then by Definition \ref{def:ad} (2), there exist  $i \in \{1,\dots, t\}$ and $j\in \{t+1,\dots,m\}$ such that
$(\alpha_{ki}-\alpha_{kj})(\beta_{ki}-\alpha_{ki})\neq0$, i.e., $\alpha_{ki}-\alpha_{kj}\neq 0$ and $\beta_{ki}-\alpha_{ki}\neq0$. Note that by \eqref{eq:scalar}, we have $\beta_{k1}-\alpha_{k1}\neq0$ since $\beta_{ki}-\alpha_{ki}=\lambda_i(\beta_{k1}-\alpha_{k1})$ and $\lambda_i\neq 0$. So, by Definition \ref{def:adnotion} (1)(2),
we have $k\in E\cap H$.

``$\Leftarrow$:"
If $k\in E
\cap H$, then $k\in E$, and so by Definition \ref{def:adnotion} (1),  $\alpha_{k1},\dots \alpha_{km}$ are not all the same. By Lemma \ref{lm:t<m}, if $cap_{pos}(G)>0$, then  $\{t+1,\dots, m\}\neq \emptyset$. If there exists $j\in \{t+1,\dots, m\}$ such that $\alpha_{k1}\neq \alpha_{kj}$, then  by (2), $Ad(G_k)\geq 1$. If
$\alpha_{k1}= \alpha_{kj}$ for any $j\in \{t+1,\dots, m\}$, then there exists $j\in \{2,\dots, t\}$ such that $\alpha_{k1}\neq \alpha_{kj}$  since $\alpha_{k1},\dots \alpha_{km}$ are not all the same. So, we have $\alpha_{km}\neq \alpha_{kj}$, and thus  by (2), $Ad(G_k)\geq 1$.

(4) By (1) and (3), we have
$$Ad(G)=\sum\limits_{k=1}^sAd(G_k)\geq \sum\limits_{k\in E\cap H}Ad(G_k)\geq \sum\limits_{k\in E\cap H}1=|E\cap H|.$$

(5)Let $\alpha_{ki}, \alpha_{kj}, \alpha_{k\ell}$ be three different values. If $i,j,\ell\in \{1,\dots,t\}$, then at least two of the values $\alpha_{ki}-\alpha_{km},\alpha_{kj}-\alpha_{km},\alpha_{k\ell}-\alpha_{km}$ are non-zero. Since $k\in H$, by (2), we have $Ad(G_k)\geq 2$. Similarly, if $i,j,\ell\in \{t+1,\dots,m\}$, then at least two of $\alpha_{ki}-\alpha_{k1},\alpha_{kj}-\alpha_{k1},\alpha_{k\ell}-\alpha_{k1}$ are non-zero, and thus, $Ad(G_k)\geq 2$.
Without loss of generality, assume $i\in\{1,\dots,t\}$ and $j,\ell\in \{t+1,\dots,m\}$ such that $\alpha_{ki}, \alpha_{kj}, \alpha_{k\ell}$ are distinct, then by (2),  we have $Ad(G_k)\geq 2$.
\end{proof}

\begin{lemma}\label{lm:G'}
If $0<cap_{pos}(G)<+\infty$,
 $cap_{pos}(G)=cap_{pos}(G_{E\cap H})$.
\end{lemma}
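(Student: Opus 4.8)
The plan is to establish both inequalities $cap_{pos}(G)\le cap_{pos}(G_{E\cap H})$ and $cap_{pos}(G)\ge cap_{pos}(G_{E\cap H})$ by reducing, in each network, the enumeration of positive steady states to counting the positive roots of one univariate equation inside a feasibility interval. Write $F:=E\cap H$ and split $\{1,\dots,s\}$ into $F$, $A:=H\setminus E$, $B:=E\setminus H$, and $C:=\{1,\dots,s\}\setminus(E\cup H)$; for $k\notin E$ let $\alpha_k$ denote the common value $\alpha_{k1}=\dots=\alpha_{km}$. Using the conservation laws \eqref{eq:h}, at a positive steady state each $x_k$ ($k\ge 2$) is affine in $x_1$, with nonzero slope when $k\in H$ and constant value $-c_{k-1}/(\beta_{11}-\alpha_{11})$ when $k\notin H$. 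Substituting these into \eqref{eq:h1} turns $h_1=0$ into a univariate equation in $x_1$ whose roots in the open interval $\{x_1>0 : x_k(x_1)>0\ \forall k\}$ are exactly the positive steady states.

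The core computation is to factor $h_1$. For $k\in A$ the exponents are $j$-independent, so $\prod_{k\in A}x_k^{\alpha_k}$ is a common factor; for $k\in C$ it is additionally a positive constant; and for $k\in B$ the $x_k$ are positive constants, so $\prod_{k\in B}x_k^{\alpha_{kj}}$ is absorbed into $\tilde\kappa_j:=\kappa_j\prod_{k\in B}x_k^{\alpha_{kj}}>0$. On the feasible interval all these factors are strictly positive and $\beta_{11}-\alpha_{11}\ne 0$, so dividing them off leaves the zero set unchanged and reduces $h_1=0$ to $\sum_{j=1}^m\lambda_j\tilde\kappa_j\prod_{k\in F}x_k^{\alpha_{kj}}=0$, which is exactly the steady-state equation of $G_F$ (the scalars $\lambda_j$ are intrinsic to the reactions, so any reference species in $F\subseteq H$ returns the same values by \eqref{eq:scalar}). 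A short separate argument shows $F\ne\emptyset$: if it were empty the reduced equation would be a constant in $x_1$, which for the parameters realizing a positive steady state must vanish identically on an interval, forcing $cap_{pos}(G)=+\infty$ (using Lemma \ref{lm:t<m}) and contradicting the hypothesis. Since $E,H,F$ do not depend on the choice of reference species, I may relabel so that $X_1\in F$, which makes the reductions of $G$ and $G_F$ use the same coordinate $x_1$ and the same affine relations among the $F$-coordinates.

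For $cap_{pos}(G_F)\ge cap_{pos}(G)$ I start from parameters giving $N$ positive steady states of $G$; these are $N$ distinct values of $x_1$ solving the reduced equation and lying in $G$'s feasibility interval. Assigning $G_F$ the rate constants $\tilde\kappa_j$ and the total constants indexed by $F$ leaves the reduced equation unchanged, and since $G_F$ imposes only the constraints $x_k>0$ for $k\in F$ its feasibility interval contains that of $G$; hence the same $N$ points are positive steady states of $G_F$.

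The delicate direction is $cap_{pos}(G)\ge cap_{pos}(G_F)$, where the extra positivity constraints of $G$ must be met at all $N$ solution points. Given $N$ positive steady states of $G_F$ with rate constants $\mu_j$, I fix arbitrary positive constants $x_k$ for $k\in B\cup C$ (through the associated $c_{k-1}$) and set $\kappa_j:=\mu_j/\prod_{k\in B}x_k^{\alpha_{kj}}$, so that $G$'s reduced equation coincides with $G_F$'s. The key observation making the lift possible is that the total constants $c_{k-1}$ for $k\in A$ enter $h_1$ only through the factor $\prod_{k\in A}x_k^{\alpha_k}$ that was divided off, hence they do not affect the reduced equation and remain free to be chosen; since each requirement $x_k(x_1^{(i)})>0$ is a single affine inequality in $c_{k-1}$, all of them (over the finitely many points $x_1^{(1)},\dots,x_1^{(N)}$) are satisfied by pushing $c_{k-1}$ far enough in the direction dictated by the sign of $\beta_{11}-\alpha_{11}$. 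This keeps all $N$ points feasible positive steady states of $G$, giving $cap_{pos}(G)\ge N$. The main obstacle is precisely this last feasibility bookkeeping: ensuring the enlarged constraint set of $G$ can be made to contain exactly the prescribed solutions without perturbing the reduced equation; the observation that the $A$-species contribute only a common, divisible positive factor is what decouples feasibility from the equation and makes the construction go through.
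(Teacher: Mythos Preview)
Your argument is correct and rests on the same mechanism as the paper's proof: species with $k\notin E$ contribute a common positive factor to $h_1$, and species with $k\notin H$ contribute positive constants that can be absorbed into the rate constants, so neither affects the count of positive steady states. The execution differs in two respects. First, the paper removes one species at a time, treating the cases $k\notin E$ and $k\notin H$ separately and then appealing to iteration, whereas you handle all removals simultaneously via the partition $F\cup A\cup B\cup C$; your route is more explicit and avoids the implicit induction. Second, the paper's proof is terse about the equality of capacities (it simply says ``comparing the two steady-state systems'' yields equality), while you spell out both inequalities and, in particular, the feasibility step needed to lift $N$ steady states of $G_F$ back to $G$ by choosing the free total constants $c_{k-1}$ for $k\in A$ after the rate constants are fixed---this is exactly the point the paper leaves to the reader. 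Your inclusion of the argument that $F\neq\emptyset$ duplicates the paper's Lemma~\ref{lm:eh}, which is proved there separately; it is needed here only so that the relabeling $X_1\in F$ makes sense. One small remark: the appeal to Lemma~\ref{lm:t<m} in that step is not actually required---once you know some parameter choice gives a positive steady state, the reduced constant must vanish and the open feasibility interval already yields infinitely many solutions.
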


\begin{proof}
(1) For any $k\in  \{1,\dots, s\}$ such that $k\notin E$, we have $\alpha_{k1}=\dots=\alpha_{km}$. The steady-state system defined in \eqref{eq:h1} and \eqref{eq:h} can be written as
\begin{align} h_1(x)~&=\;\left(\beta_{11}-\alpha_{11}\right)x_k^{\alpha_{k1}}\left(\sum\limits_{j=1}^m
\lambda_j\kappa_j\prod\limits_{i\neq k} x_i^{\alpha_{ij}}\right)\label{eq:sys1}\\
h_i(x) ~&= ~(\beta_{i1}-\alpha_{i1})x_1 - (\beta_{11}-\alpha_{11})x_i - c_{i-1}, \;\; 2\leq i\leq s. \label{eq:sys2}
\end{align}
Consider the network obtained by removing the species $X_k$ from network $G$, say $\tilde G$.
The steady-state system for $\tilde G$ is
\begin{align}
\tilde h_1(x)~&=\;\left(\beta_{11}-\alpha_{11}\right)\left(\sum\limits_{j=1}^m
\lambda_j\kappa_j\prod\limits_{i\neq k} x_i^{\alpha_{ij}}\right)\label{eq:sys3}\\
\tilde h_i(x) ~&= ~(\beta_{i1}-\alpha_{i1})x_1 - (\beta_{11}-\alpha_{11})x_i - c_{i-1}, \;\; i\in \{2, \ldots, s\}\backslash \{k\}
\label{eq:sys4}
\end{align}
(here, we assume $k\neq 1$, and if $k=1$, by  $0<cap_{pos}(G)<+\infty$ and by \cite[Lemma 5.11]{txzs2020}, one can similarly prove the conclusion by reordering the species).
Comparing the two steady-state systems \eqref{eq:sys1}--\eqref{eq:sys2} and  \eqref{eq:sys3}--\eqref{eq:sys4}, we have
$$cap_{pos}(G)=cap_{pos}(\tilde G).$$
(2)For any $k\in  \{1,\dots, s\}$ such that $k\notin H$, we have $\beta_{k1}-\alpha_{k1}=0$. Recall that by Assumption \ref{assumption1},
we have $\beta_{11}-\alpha_{11}\neq 0$. So, here we have $k\neq 1$.
 Hence, the steady-state system defined in \eqref{eq:h1} and \eqref{eq:h} becomes
\begin{align*} h_1(x)~&=\;\left(\beta_{11}-\alpha_{11}\right)\left(\sum\limits_{j=1}^m
\lambda_j\tilde \kappa_j\prod\limits_{i\neq k} x_i^{\alpha_{ij}}\right), \;\text{where}\; \tilde\kappa_j :=  \kappa_j\left(\frac{c_{k-1}}{\beta_{11}-\alpha_{11}}\right)^{\alpha_{kj}}.\\
h_i(x) ~&= ~(\beta_{i1}-\alpha_{i1})x_1 - (\beta_{11}-\alpha_{11})x_i - c_{i-1}, \;\; \;\; i\in \{2, \ldots, s\}\backslash \{k\}.\end{align*}
Consider the network obtained by removing the species $X_k$ from network $G$, say $\tilde G$. Then,
the above steady-state system is exactly the same with the steady-state system of $\tilde G$ if the rate constants of $\tilde G$ are denoted by
$\tilde \kappa_1, \ldots, \tilde \kappa_m$.
Therefore, $cap_{pos}(G)=cap_{pos}(\tilde G)$.

By (1) and (2), since $G_{E\cap H}$ is obtained by removing all the species indexed by $k\notin E\cap H$,
we have $cap_{pos}(G)=cap_{pos}(G_{E\cap H})$.
\end{proof}

\begin{lemma}\label{lm:eh}
If $0<cap_{pos}(G)<+\infty$, then  $E\cap H\neq \emptyset$.
\end{lemma}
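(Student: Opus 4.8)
The plan is to argue by contradiction: assume $E\cap H=\emptyset$ and show that this forces $cap_{pos}(G)\in\{0,+\infty\}$, contradicting the hypothesis $0<cap_{pos}(G)<+\infty$. First I would unpack what $E\cap H=\emptyset$ means coordinate by coordinate. By Definition \ref{def:adnotion}, for every index $k$ we have either $k\notin H$ or $k\notin E$. For $k\in H$ this gives $k\notin E$, so the reactant exponents $\alpha_{k1},\dots,\alpha_{km}$ are all equal to a common value $a_k$. For $k\notin H$ we have $\beta_{k1}-\alpha_{k1}=0$; note that $1\in H$ by Assumption \ref{assumption1}, so every such $k$ is at least $2$ and the conservation law $h_k$ in \eqref{eq:h} reduces to $-(\beta_{11}-\alpha_{11})x_k-c_{k-1}$, which pins $x_k$ to the constant $-c_{k-1}/(\beta_{11}-\alpha_{11})$ on any stoichiometric compatibility class.

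Next I would feed these two facts into the steady-state polynomial $h_1$ of \eqref{eq:h1}. Because $\alpha_{kj}=a_k$ is independent of $j$ for every $k\in H$, the factor $\prod_{k\in H}x_k^{a_k}$ is common to all monomials and pulls out, giving
\[
h_1=(\beta_{11}-\alpha_{11})\Bigl(\prod_{k\in H}x_k^{a_k}\Bigr)\,Q(x),\qquad Q(x):=\sum_{j=1}^m\lambda_j\kappa_j\prod_{k\notin H}x_k^{\alpha_{kj}}.
\]
The crucial point is that $Q$ involves only the variables $x_k$ with $k\notin H$, and by the previous paragraph these are all constant on a compatibility class $\mathcal{P}_c$. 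Hence $Q$ restricts to a constant $\widehat Q=\widehat Q(\kappa,c)$ on $\mathcal{P}_c$.

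Finally I would invoke $cap_{pos}(G)>0$ to fix rate constants $\kappa^*$, total constants $c^*$, and a positive steady state $x^*\in\mathcal{P}_{c^*}$. Evaluating the factorization at $x^*$ and using $x^*\in{\mathbb R}^s_{>0}$ together with $\beta_{11}-\alpha_{11}\neq0$ forces $\widehat Q(\kappa^*,c^*)=0$; but then $h_1\equiv0$ on all of $\mathcal{P}_{c^*}\cap{\mathbb R}^s_{>0}$. Since $\mathcal{P}_{c^*}$ is one-dimensional (as ${\rm rank}(W)=s-1$) and its positive part is a nonempty relatively open interval containing $x^*$, every point of that interval is a positive steady state, so $cap_{pos}(G)=+\infty$, a contradiction. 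I expect the main obstacle to be setting up the factorization cleanly: verifying that $E\cap H=\emptyset$ really does make the reactant exponents constant on $H$ while simultaneously making the non-$H$ variables constant on each class, so that the whole of $h_1$ collapses to a constant multiple of a fixed positive monomial on the compatibility class. Lemma \ref{lm:G'} gives an alternative route by reducing to the empty network $G_{E\cap H}$, but the direct factorization avoids having to interpret a network with no species.
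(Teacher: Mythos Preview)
Your proof is correct and follows essentially the same approach as the paper. The only difference is cosmetic: the paper first invokes (the proof of) Lemma~\ref{lm:G'} to reduce to the subnetwork $G_H$, whose steady-state polynomial then factors as $\bigl(\sum_{j}\lambda_j\kappa_j\bigr)\prod_{k\in H}x_k^{\alpha_{k1}}$, giving the $0$/$\infty$ dichotomy immediately; you instead carry out the same factorization directly on $h_1$ for $G$ and handle the variables $x_k$ with $k\notin H$ via their conservation laws, which is exactly the content of part~(2) of the proof of Lemma~\ref{lm:G'} written inline. Your version is slightly more self-contained, but the underlying mechanism is identical.
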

\begin{proof}
By the proof of Lemma \ref{lm:G'}, we have
$cap_{pos}(G)=cap_{pos}(G_H)$. Assume $E\cap H=\emptyset$. Then for any
$k\in H$, we have $k\notin E$, i.e., $\alpha_{k1}=\dots=\alpha_{km}$. By \eqref{eq:h1}, the first equation in the steady-state system for the network
$G_H$ is
  $$h_1(x)=\left(\sum\limits_{j=1}^{m}
    \lambda_j\kappa_j\right)\left(\prod\limits_{k\in H}x_k^{\alpha_{k1}}\right)=0.$$
    Clearly, either $G_H$ has no positive steady states, or $G_H$ has infinitely many positive steady states,
  which is a contradiction to  $0<cap_{pos}(G_H)=cap_{pos}(G)<+\infty.$
\end{proof}

\begin{definition}\label{def:T-alter}\cite[Definition 3.4]{Joshi:Shiu:Multistationary}
  For a positive integer $T$, a \defword{$T$-alternating network} is a one-species network with exactly $T+1$ reactions and with arrow diagram $\rho\in\{\rightarrow,\leftarrow\}^{T+1}$ such that $\rho_i=\rightarrow$ if and only if $\rho_{i+1}=\leftarrow$ for all $i\in \{1,\dots,T\}$.
\end{definition}

\begin{lemma}\label{lm:neq2}
If $3\leq cap_{pos}(G)<+\infty$ and $|E\cap H|=1$, then $Ad(G)\geq 3$.
\end{lemma}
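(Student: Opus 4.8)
The plan is to collapse the whole network down to the single‑species embedded network indexed by the one element of $E\cap H$, and then run Descartes' rule of signs. Since $cap_{pos}(G)\ge 3>0$, I would write $E\cap H=\{k_0\}$ and first record the two reductions that make this work: by Proposition~\ref{prop:Ad}(3), $Ad(G_k)=0$ for every $k\notin E\cap H$, so Proposition~\ref{prop:Ad}(1) gives $Ad(G)=Ad(G_{k_0})$; and Lemma~\ref{lm:G'} (which uses the hypothesis $0<cap_{pos}(G)<+\infty$) gives $cap_{pos}(G_{k_0})=cap_{pos}(G_{E\cap H})=cap_{pos}(G)$, so the one-species network $G_{k_0}$ satisfies $3\le cap_{pos}(G_{k_0})<+\infty$. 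After this reduction it suffices to prove $Ad(G_{k_0})\ge 3$.

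Next I would analyze $G_{k_0}$ as a univariate problem. Its positive steady states are the positive roots of $f(x)=\sum_{j=1}^{m}(\beta_{k_0 j}-\alpha_{k_0 j})\kappa_j\,x^{\alpha_{k_0 j}}$, and collecting monomials by the distinct values among $\alpha_{k_0 1},\dots,\alpha_{k_0 m}$ (listed increasingly) yields a $\kappa$-dependent coefficient sequence $(c_1,\dots,c_\ell)$. By Descartes' rule of signs the number of positive roots of $f$ is at most the number of sign changes of this sequence. Because $cap_{pos}(G_{k_0})\ge 3$, there is some $\kappa^*\in\mathbb R^m_{>0}$ for which $f$ has at least three positive roots, so $(c_1(\kappa^*),\dots,c_\ell(\kappa^*))$ has at least three sign changes; consequently it contains four exponents $v_1<v_2<v_3<v_4$ whose coefficients strictly alternate in sign.

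The final step is the sign bookkeeping that turns this alternation into bi-arrow diagrams, and this is the part I expect to be the most delicate. By Assumption~\ref{assumption2} and \eqref{eq:scalar}, each reaction of index in $\{1,\dots,t\}$ contributes to $f$ with the sign of $\beta_{k_0 1}-\alpha_{k_0 1}$ and each reaction of index in $\{t+1,\dots,m\}$ with the opposite sign; normalize so that the former contribute positively. Among $v_1,v_2,v_3,v_4$ exactly two carry a positive coefficient and two a negative one, and by alternation the positive-coefficient exponents are disjoint (as exponent values) from the negative-coefficient ones. The key observation is that a positive coefficient at an exponent forces at least one reaction of index in $\{1,\dots,t\}$ with that reactant coefficient, since an exponent fed only by reactions of index $>t$ would have a negative coefficient, and symmetrically a negative coefficient forces a reaction of index in $\{t+1,\dots,m\}$. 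This produces two reactions $i,i'\in\{1,\dots,t\}$ and two reactions $j,j'\in\{t+1,\dots,m\}$ whose reactant coefficients are pairwise distinct across the two groups, so every one of the four pairs in $\{i,i'\}\times\{j,j'\}$ satisfies $(\alpha_{k_0\,\cdot}-\alpha_{k_0\,\cdot})(\beta_{k_0\,\cdot}-\alpha_{k_0\,\cdot})\neq 0$, and Proposition~\ref{prop:Ad}(2) gives $Ad(G_{k_0})\ge 4\ge 3$. An alternative to the Descartes step would be to cite the one-species capacity characterization \cite[Theorem 3.6]{Joshi:Shiu:Multistationary} to extract an embedded $3$-alternating subnetwork of $G_{k_0}$ (in the sense of Definition~\ref{def:T-alter}) directly, from whose four alternating reactions the same four distinct-coefficient pairs can be read off.
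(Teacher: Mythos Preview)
Your reduction step---using Lemma~\ref{lm:G'} to get $3\le cap_{pos}(G_{k_0})<+\infty$ and Proposition~\ref{prop:Ad}(1),(3) to get $Ad(G)=Ad(G_{k_0})$---is exactly what the paper does. For the one-species analysis, the paper simply invokes \cite[Theorem~3.6~2(b)]{Joshi:Shiu:Multistationary} to obtain a $3$-alternating subnetwork of $G_{k_0}$ and reads off $Ad(G_{k_0})\ge 3$; this is precisely the alternative route you sketch at the end. Your main argument via Descartes' rule of signs is a correct, self-contained rederivation of the needed direction of that cited result: three positive roots force three sign changes, hence four exponents $v_1<v_2<v_3<v_4$ with strictly alternating coefficient signs, and your observation that a positive (resp.\ negative) coefficient at an exponent forces a contributing reaction with index in $\{1,\dots,t\}$ (resp.\ $\{t+1,\dots,m\}$) then yields four distinct pairs in $\{1,\dots,t\}\times\{t+1,\dots,m\}$, so in fact $Ad(G_{k_0})\ge 4$. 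Both approaches are valid; yours avoids the external citation at the cost of spelling out the Descartes/sign-tracking step, and incidentally gives the slightly sharper bound.
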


\begin{proof}
Let $E\cap H=\{k\}$, then by Lemma \ref{lm:G'}, we have $cap_{pos}(G)=cap_{pos}(G_k)$, and thus $3\leq cap_{pos}(G_k)<\infty$. By \cite[Theorem 3.6 2(b)]{Joshi:Shiu:Multistationary}, $G_k$ has a $3$-alternating subnetwork, and so, by Definition
\ref{def:T-alter}, Definition \ref{def:arrow} and Definition \ref{def:ad} (1)(2),  $Ad(G_k)\geq 3$. Finally, by Proposition \ref{prop:Ad} (1), we have $Ad(G)\geq Ad(G_k)\geq 3$.
\end{proof}

\begin{lemma}\label{lm:G=2}
  Suppose that $|\{j\in \{1, \ldots, m\}|\lambda_j>0\}|=1$ and $|E\cap H|=2$. Without loss of generality, let $E\cap H=\{1,2\}$. If both $\{\alpha_{11}, \dots, \alpha_{1m}\}$ and $\{\alpha_{21}, \dots, \alpha_{2m}\}$ contain only $2$ distinct values, then  $Ad(G)\geq 3$ or $cap_{pos}(G)\leq 2$.
\end{lemma}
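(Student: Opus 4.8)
The plan is to show that the only configuration consistent with $Ad(G)<3$ produces a steady-state function which, along each stoichiometric compatibility class, is either globally concave or the exponential of a unimodal function, and that either shape has at most two positive zeros.

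First I would dispose of the trivial case $cap_{pos}(G)=0$ (where $cap_{pos}(G)\le 2$ holds), and assume $cap_{pos}(G)>0$. Since $E\cap H=\{1,2\}$, Proposition~\ref{prop:Ad}(1) and (3) give $Ad(G)=Ad(G_1)+Ad(G_2)$ with both summands $\ge 1$; hence $Ad(G)\ge 2$, and if $Ad(G_1)\ge 2$ or $Ad(G_2)\ge 2$ we already have $Ad(G)\ge 3$ and are done. So the remaining case is $Ad(G_1)=Ad(G_2)=1$. Because the hypothesis $|\{j:\lambda_j>0\}|=1$ means $t=1$, Proposition~\ref{prop:Ad}(2) reads $Ad(G_k)=|\{j\in\{2,\dots,m\}:\alpha_{kj}\ne\alpha_{k1}\}|$; combined with the assumption that each of $\{\alpha_{11},\dots,\alpha_{1m}\}$ and $\{\alpha_{21},\dots,\alpha_{2m}\}$ takes only two values, this forces $X_1$ to have reactant coefficient $\alpha_{11}$ in every reaction except one, say reaction $p\in\{2,\dots,m\}$, and $X_2$ to have coefficient $\alpha_{21}$ except in one reaction $q\in\{2,\dots,m\}$. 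Note $p,q\ge 2$, so $\lambda_p,\lambda_q<0$.

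Next I would reduce to the two-species network. By Lemma~\ref{lm:G'} it suffices to bound $cap_{pos}(G_{\{1,2\}})$. Writing $v_i:=\beta_{i1}-\alpha_{i1}\ne 0$ for $i=1,2$, the conservation law \eqref{eq:h} forces $x_2=(v_2x_1-c_1)/v_1$, an affine monotone function of $x_1$, and a positive steady state is a point $x_1$ of the open interval $I:=\{x_1>0: x_2>0\}$ at which the steady-state function \eqref{eq:h1} vanishes. Dividing that function by the positive factor $x_1^{\alpha_{11}}x_2^{\alpha_{21}}$ and collecting terms, I obtain $\Phi(x_1)=C_0+Ax_1^{d_1}+Bx_2^{d_2}$ when $p\ne q$, and $\Phi(x_1)=C_0+A\,x_1^{d_1}x_2^{d_2}$ when $p=q$, where $A=\lambda_p\kappa_p<0$, $B=\lambda_q\kappa_q<0$, $C_0$ gathers the coefficients of the shared monomial, and $d_1,d_2$ are the nonzero integer exponent differences.

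For the case $p\ne q$, the key observation, which I expect to be the crux, is that $\Phi$ is \emph{concave} on $I$: for any nonzero integer $d$ the map $t\mapsto t^d$ is convex on $(0,\infty)$ (its second derivative $d(d-1)t^{d-2}$ is nonnegative there), so $x_1^{d_1}$ and $x_2^{d_2}$, the latter being a convex power precomposed with the affine map $x_1\mapsto x_2$, are both convex in $x_1$; multiplying by the negative constants $A,B$ makes each term concave, and adding $C_0$ preserves concavity. A nonconstant concave function has at most two zeros on an interval, and $\Phi$ is nonconstant since $Ad_1\ne 0$, so each class has at most two positive steady states. The subtlety here is that a naive count through the critical points of $\Phi'$ would only yield three zeros; it is essential to notice the global concavity rather than merely bound the number of critical points. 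For the case $p=q$, the equation $\Phi=0$ is equivalent to $x_1^{d_1}x_2^{d_2}=-C_0/A$, which has no positive solution unless $-C_0/A>0$; setting $L(x_1)=d_1\log x_1+d_2\log x_2$, one computes that $L'(x_1)$ is a rational function whose numerator is affine in $x_1$ and whose denominator $x_1x_2$ has constant sign on $I$, so $L'$ changes sign at most once, $L$ is unimodal, and $L(x_1)=\log(-C_0/A)$ has at most two solutions. In either subcase $cap_{pos}(G)=cap_{pos}(G_{\{1,2\}})\le 2$, which together with the earlier reductions completes the proof.
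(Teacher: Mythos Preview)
Your argument follows the paper's route closely: the same reduction via Proposition~\ref{prop:Ad} to the case $Ad(G_1)=Ad(G_2)=1$, the same identification of the two exceptional reactions $p,q$, the same dichotomy $p=q$ versus $p\neq q$, and in each branch a second-derivative/convexity bound on the number of solutions. Your execution of the $p\neq q$ branch is in fact tidier than the paper's: you observe directly that $\Phi(x_1)=C_0+Ax_1^{d_1}+Bx_2^{d_2}$ is concave on $I$ because $t\mapsto t^{d}$ is convex on $(0,\infty)$ for every nonzero integer $d$ and $A,B<0$, whereas the paper first solves $\Phi=0$ for $x_2$ as a (possibly fractional) power and then argues that the resulting $F''$ has constant sign. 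Your treatment avoids the domain and fractional-exponent bookkeeping entirely.

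One small point: the sentence ``$\Phi$ is nonconstant since $Ad_1\neq 0$'' does not quite stand on its own. When $d_1=d_2=1$ the function $\Phi$ is affine in $x_1$ and can degenerate to a constant for suitable $\kappa$ (and if that constant is $0$ one gets infinitely many positive steady states, so $cap_{pos}(G)=\infty$). The paper's argument has the same blind spot in its Case~2. This is really a defect of the lemma's hypotheses rather than of either proof: the lemma is only invoked in the proof of Theorem~\ref{thm:ad3} under the standing assumption $cap_{pos}(G)<+\infty$, and with that hypothesis added the degenerate case is excluded. You could patch your write-up by noting that if $\Phi$ is constant then $cap_{pos}(G)\in\{0,\infty\}$, so under the intended finiteness assumption $cap_{pos}(G)\le 2$ follows anyway.
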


\begin{proof}
Recall Assumption \ref{assumption2}. The hypothesis $|\{j\in \{1, \ldots, m\}|\lambda_j>0\}|=1$ means that $t=1$.
If $t=1$, then $\{1, \ldots, t\}=\{1\}$ and $\{t+1, \ldots, m\}=\{2, \ldots, m\}$.
  If there exist $i,j\in \{2,\dots,m\}$ $(i\neq j)$, such that $\alpha_{11}\neq \alpha_{1i}$ and $\alpha_{11}\neq \alpha_{1j}$, then by Proposition \ref{prop:Ad} (2) (note here $1\in E\cap H\subset H$), $Ad(G_1)\geq 2$. By Proposition \ref{prop:Ad} (3), $Ad(G_2)\geq 1$. So it follows from Proposition \ref{prop:Ad} (1) that $Ad(G)\geq Ad(G_1)+Ad(G_2)\geq 3$. Similarly, if there exists $i,j\in \{2,\dots,m\}$, $i\neq j$, such that $\alpha_{21}\neq \alpha_{2i}$ and $\alpha_{21}\neq \alpha_{2j}$, then we also have $Ad(G)\geq 3$.
 Below, we will consider the case that there exists only one index $j_1\in \{2,\dots,m\}$ such that $\alpha_{11}\neq \alpha_{1j_1}$ and
 there also exists only one  index $j_2\in \{2,\dots,m\}$ such that $\alpha_{21}\neq \alpha_{2j_2}$. Without loss of generality, we assume
 that $j_1=m$. So, we have   $\alpha_{11}=\dots=\alpha_{1 m-1}\neq\alpha_{1m}$. In the rest of the proof, we discuss the following two cases.

\textbf{Case 1:}  If we have $j_2=m$, then
$\alpha_{11}=\dots=\alpha_{1 m-1}\neq\alpha_{1m}$ and $\alpha_{21}=\dots=\alpha_{2 m-1}\neq\alpha_{2m}$.

    In this case, for the polynomial $h_1$ \eqref{eq:h1} defined in the steady-state system, we have
    \begin{align*}
      h_1(x)&~=~\left(\beta_{11}-\alpha_{11}\right)\left(\prod\limits_{k=3}^s x_k^{\alpha_{k1}}\right)
      \left(\sum\limits_{j=1}^{m-1}
    \lambda_j\kappa_jx_1^{\alpha_{11}}x_2^{\alpha_{21}}+\lambda_m\kappa_m
    x_1^{\alpha_{1m}}x_2^{\alpha_{2m}}\right)\\
     ~&~=~\left(\beta_{11}-\alpha_{11}\right)\left(\prod\limits_{k=3}^s x_k^{\alpha_{k1}}\right)
     x_1^{\alpha_{11}}x_2^{\alpha_{21}}\left(\sum\limits_{j=1}^{m-1}
    \lambda_j\kappa_j+\lambda_m\kappa_m
    x_1^{\alpha_{1m}-\alpha_{11}}x_2^{\alpha_{2m}-\alpha_{21}}\right).
    \end{align*}
    So the positive solutions to the  steady-state system \eqref{eq:h1}--\eqref{eq:h} are all the positive solutions to the following two equations:
    \begin{align}
    x_2&\;=\; {\mathcal C} x_1^{\frac{\alpha_{11}-\alpha_{1m}}{\alpha_{2m}-\alpha_{21}}}, \; \;\;\;\;\;\;\;\;\;\;\text{where}\; {\mathcal C}=\left(-\sum\limits_{j=1}^{m-1}\lambda_j\kappa_j/
    \lambda_m\kappa_m\right)^{\frac{1}{\alpha_{2m}-\alpha_{21}}} \label{eq:l1}\\
    x_2&\;=\;
    \frac{\beta_{21}-\alpha_{21}}{\beta_{11}-\alpha_{11}}x_1-
    \frac{c_1}{\beta_{11}-\alpha_{11}}. \label{eq:l2}
    \end{align}
  Note that a power function and a linear function have at most two intersection points of their graphs in the first quadrant. So, the above two equations have at most two common positive solutions.
  Therefore, we have $cap_{pos}(G)\leq 2$.

\textbf{Case 2:}
If we have $j_2\in \{2, \ldots, m-1\}$, then without loss of generality, we assume $j_2=2$. So, we have
 $\alpha_{11}=\dots=\alpha_{1 m-1}\neq\alpha_{1m}$ and $\alpha_{21}=\alpha_{23}=\dots=\alpha_{2 m}\neq\alpha_{22}$.

   In this case, for the polynomial $h_1$ \eqref{eq:h1} defined in the steady-state system, we have
    \begin{align*}
      h_1(x)&~=~\left(\beta_{11}-\alpha_{11}\right)\left(\prod\limits_{k=3}^s x_k^{\alpha_{k1}}\right)
      \left(\bar {\mathcal C} x_1^{\alpha_{11}}x_2^{\alpha_{21}}+\lambda_2\kappa_2
    x_1^{\alpha_{11}}x_2^{\alpha_{22}}+\lambda_m\kappa_m
    x_1^{\alpha_{1m}}x_2^{\alpha_{21}}\right)\\
    ~&~=~\left(\beta_{11}-\alpha_{11}\right)\left(\prod\limits_{k=3}^s x_k^{\alpha_{k1}}\right)
     x_1^{\alpha_{11}}x_2^{\alpha_{21}}\left(\bar {\mathcal C}+
     \lambda_2\kappa_2
    x_2^{\alpha_{22}-\alpha_{21}}+\lambda_m\kappa_m
    x_1^{\alpha_{1m}-\alpha_{11}}\right),
    \end{align*}
where $\bar {\mathcal C}=\lambda_1\kappa_1+\sum\limits_{j=3}^{m-1}
    \lambda_j\kappa_j$.
    So the positive solutions to the  steady-state system \eqref{eq:h1}--\eqref{eq:h} are all the positive solutions to the following two equations:
    \begin{align}
    x_2&\;=\; \left(-\frac{\bar {\mathcal C} +\lambda_m\kappa_m x_1^{\alpha_{1m}-\alpha_{11}}}{\lambda_2\kappa_2}\right)^{\frac{1}{\alpha_{22}-\alpha_{21}}}, \label{eq:l3} \\
    x_2&\;=\;\frac{\beta_{21}-\alpha_{21}}{\beta_{11}-\alpha_{11}}x_1-
    \frac{c_1}{\beta_{11}-\alpha_{11}}.
    \label{eq:l4}
\end{align}
    Let $$F(x_1)=\left(-\frac{\bar {\mathcal C} +\lambda_m\kappa_m x_1^{\alpha_{1m}-\alpha_{11}}}{\lambda_2\kappa_2}\right)^{\frac{1}{\alpha_{22}-\alpha_{21}}}
    -\frac{\beta_{21}-\alpha_{21}}{\beta_{11}-\alpha_{11}}x_1+
    \frac{c_1}{\beta_{11}-\alpha_{11}}.$$
    It is straightforward to check that $F''(x_1)$ does not change sign when
    $x_1>0$ and $-\frac{\bar {\mathcal C} +\lambda_m\kappa_m x_1^{\alpha_{1m}-\alpha_{11}}}{\lambda_2\kappa_2}>0$. However, if the equations \eqref{eq:l3}-\eqref{eq:l4}  have at least three common positive solutions such that
    $x_1>0$ and $x_2>0$, then
       $F''(x_1)=0$ has at least one solution.
       So,  the equations \eqref{eq:l3}-\eqref{eq:l4} have at most two common positive solutions.    Above all, we have $cap_{pos}(G)\leq 2$.
\end{proof}

{\bf Proof of Theorem \ref{thm:ad3}.}
\begin{proof}
  Since $cap_{pos}(G)\geq 3>0$, by Lemma
  \ref{lm:eh},
  we have $E\cap H\neq \emptyset$.
    By Proposition \ref{prop:Ad} (4), if $|E\cap H|\geq 3$, then $Ad(G)\geq 3$.    By Lemma \ref{lm:neq2}, if $|E\cap H|=1$, then $Ad(G)\geq 3$.
    Below, we suppose that $|E\cap H|=2$. Without loss of generality,  we assume that $E\cap H=\{1,2\}$.
    By Proposition \ref{prop:Ad} (3), for $k=1$, we have $Ad(G_k)\geq 1$. By Definition \ref{def:ad}, there exist
    $i\in \{1, \ldots, t\}$ and $j\in \{t+1, \ldots, m\}$ such that   $\alpha_{1i}\neq \alpha_{1j}$. Without loss of generality, we assume that
    $i=1$ and $j=m$, i.e.,     $\alpha_{11}\neq \alpha_{1m}$.

    First, we prove that if $2\leq t\leq m-2$, then $Ad(G)\geq 3$. In fact, if $2\leq t\leq m-2$,
    then $2\in \{1, \ldots, t\}$ and $m-1\in \{t+1, \ldots, m\}$.
    If  $\alpha_{11}\neq \alpha_{1,m-1}$ or $\alpha_{12}\neq \alpha_{1m}$, then
    by Proposition \ref{prop:Ad} (2), $Ad(G_1)\geq 2$. If $\alpha_{12}= \alpha_{1m}$ and $\alpha_{1,m-1}= \alpha_{11}$, then $\alpha_{12}\neq \alpha_{1,m-1}$, and thus, by  Proposition \ref{prop:Ad} (2), $Ad(G_1)\geq 2$.
    Note that by Proposition \ref{prop:Ad} (3), we have $Ad(G_2)\geq 1$ since $2\in E\cap H$.
    So by Proposition \ref{prop:Ad} (1), $Ad(G)\geq Ad(G_1)+Ad(G_2)\geq 2+1=3$.

    Second, we  prove that if $t=1$ or $t=m-1$, we have $Ad(G)\geq 3$. We only prove the conclusion when $t=1$, and one can similarly prove the conclusion if $t=m-1$. By Proposition \ref{prop:Ad} (5), if $\{\alpha_{11},\dots,\alpha_{1m}\}$ or $\{\alpha_{21},\dots, \alpha_{2m}\}$ contains at least 3 distinct values, then $Ad(G_1)\geq2$ or $Ad(G_2)\geq2$.
    Recall again that by Proposition \ref{prop:Ad} (3), we have  $Ad(G_1)\geq1$ and $Ad(G_2)\geq1$ since $E\cap H=\{1,2\}$.
    So  by Proposition \ref{prop:Ad} (1), $Ad(G)\geq Ad(G_1)+Ad(G_2)\geq 3$. If both $\{\alpha_{11},\dots,\alpha_{1m}\}$ and $\{\alpha_{21},\dots, \alpha_{2m}\}$ contain only 2 distinct values, then by Lemma \ref{lm:G=2} and by the hypothesis that $cap_{pos}(G)\geq 3$, we have $Ad(G)\geq 3$.
\end{proof}

\section{Bi-reaction networks admitting three positive steady states}\label{sec:bi}
In this section, we focus on the bi-reaction network $G$:
\begin{align}\label{eq:2network}
\alpha_{11}X_1 +
 \dots +
\alpha_{s1}X_s
&~ \xrightarrow{\kappa_1} ~
\beta_{11}X_1 +
 \dots +
\beta_{s1}X_s,
\notag \\
 \alpha_{12}X_1 +
 \dots +
\alpha_{s2}X_s
&~ \xrightarrow{\kappa_2} ~
\beta_{12}X_1 +
 \dots +
\beta_{s2}X_s.
\end{align}
For $k\in \{1, \ldots, s\}$, we define the following notions
\begin{align}\label{eq:ar}
\alpha_k:=\alpha_{k1}-\alpha_{k2},\; \;\;
\gamma_k:=\beta_{k1}-\alpha_{k1}.  
\end{align}
Without loss of generality, we make the following assumption, which gives a partition of the set $\{1, \ldots, s\}$:
\begin{align}\label{eq:s}
  \text{for}\; j \in S_1 := \{ 1, \cdots, i_1 -1\},\;
  \alpha_j > 0,\; \gamma_j > 0,\notag \\
  \text{for} \; j \in S_2 := \{ i_1, \cdots, i_2 -1\},\;
  \alpha_j < 0, \; \gamma_j < 0,\notag \\
  \text{for} \; j \in S_3 := \{ i_2, \cdots, i_3- 1\},\;
  \alpha_j >0,\; \gamma_j < 0,\notag \\
  \text{for} \; j \in S_4 := \{ i_3, \cdots, i_4 -1\},\;
  \alpha_j < 0,\; \gamma_j > 0, \text{and} \notag \\
  \text{for} \; j \in S_5 := \{ i_4, \cdots, s\},\;
  \alpha_j = 0\; \text{or}\; \gamma_j = 0.
\end{align}
For $k\in \{1, 2, 3, 4\}$, we define $sign_{\alpha}(k) := sign(\alpha_i)$ and $sign_{\alpha\gamma}(k) := sign(\alpha_i\gamma_i)$, where $i$ can be any index in $S_k$. By the definition of $S_k$, we have
$$sign_{\alpha}(1)>0, sign_{\alpha}(2)<0, sign_{\alpha}(3)>0, sign_{\alpha}(4)<0,$$
$$sign_{\alpha\gamma}(1)>0, sign_{\alpha\gamma}(2)>0, sign_{\alpha\gamma}(3)<0, sign_{\alpha\gamma}(4)<0.$$
Clearly, for any three indices $j, k, \ell\in \{1, 2, 3, 4\}$, there exists a unique pair of indices, say $(k, \ell)$,  such that
$sign_{\alpha}(k)=-sign_{\alpha}({\ell})=-sign_{\alpha}(j)$, and $sign_{\alpha\gamma}(\ell)=-sign_{\alpha\gamma}(k)=-sign_{\alpha\gamma}(j)$.
\begin{theorem}\label{thm:main}
Given a bi-reaction network $G$ \eqref{eq:2network} with a one-dimensional stoichiometric subspace,  suppose $cap_{pos}(G)<+\infty$.
\begin{itemize}
\item[(a)] If only one of the four sets $S_1, S_2, S_3, S_4$ is non-empty, then $cap_{pos}(G)<3$.
\item[(b)] If there are exactly two of the four sets $S_1,S_2,S_3,S_4$ are non-empty, then $cap_{pos}(G)\geq 3$ if and only if

  (1) both the sequence $\{\alpha_k\}_{k=1}^{i_4-1}$ and $\{\alpha_k\gamma_k\}_{k=1}^{i_4-1}$ change signs,

   (2) $\sum\limits_{i\in S_k}|\alpha_{i}|>\min\limits_{i\in S_{\ell}}\{|\alpha_i|\}$ and $\sum\limits_{i\in S_{\ell}}|\alpha_{i}|>\min\limits_{i\in S_k}\{|\alpha_i|\}$, where $S_k$ and $S_{\ell}$ are the two non-empty sets.
\item[(c)] If there are exactly three of the four sets $S_1, S_2, S_3, S_4$ are non-empty, say $S_j$, $S_k$, and $S_{\ell}$, then $cap_{pos}(G)\geq 3$ if and only if $\sum\limits_{i\in S_{\ell}}|\alpha_{i}|>\min\limits_{i\in S_k}\{|\alpha_i|\}$, where $k$ and $\ell$ are assumed to be the two indices such that $sign_{\alpha}(k)=-sign_{\alpha}({\ell})=-sign_{\alpha}(j)$, and $sign_{\alpha\gamma}(\ell)=-sign_{\alpha\gamma}(k)=-sign_{\alpha\gamma}(j)$.
\item[(d)] If all the four sets $S_1, S_2, S_3, S_4$ are non-empty, then $cap_{pos}(G)\geq 3$ if and only if at least one of the following four conditions holds
 \begin{align}
\sum\limits_{i\in S_4}|\alpha_{i}|>\min\limits_{i\in S_1}\{|\alpha_i|\},\;\;
\sum\limits_{i\in S_1}|\alpha_{i}|> \min\limits_{i\in S_4}\{|\alpha_i|\}, \label{eq:cond1}\\
\sum\limits_{i\in S_3}|\alpha_{i}|> \min\limits_{i\in S_2}\{|\alpha_i|\},\;\;
 \sum\limits_{i\in S_2}|\alpha_{i}|> \min\limits_{i\in S_3}\{|\alpha_i|\}.\label{eq:cond2}
 \end{align}
\end{itemize}
\end{theorem}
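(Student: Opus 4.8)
The plan is to collapse the problem to counting the solutions of a single scalar equation along the one‑dimensional compatibility class, and then to read off each case from the shape of a log‑transformed function. Since $G$ has two reactions and a one‑dimensional stoichiometric subspace, \eqref{eq:scalar} gives $\lambda_1=1$, and $cap_{pos}(G)>0$ forces a negative scalar by Lemma \ref{lm:t<m}, so $\lambda_2<0$ and $t=1$. Then $h_1=0$ in \eqref{eq:h1} reads $\kappa_1\prod_k x_k^{\alpha_{k1}}=-\lambda_2\kappa_2\prod_k x_k^{\alpha_{k2}}$, i.e.
\[
R(x):=\prod_{k=1}^s x_k^{\alpha_k}=\mu,\qquad \mu:=-\lambda_2\kappa_2/\kappa_1>0,
\]
with $\alpha_k$ as in \eqref{eq:ar} and $\mu$ free in $\R_{>0}$. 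Using the conservation laws \eqref{eq:h} I would parametrize the class by $x_k(u)=\gamma_k(u-v_k)$ for every $k$ with $\gamma_k\neq0$, where the poles $v_k=-x_k^\ast/\gamma_k$ may be prescribed arbitrarily because the base point $x^\ast$ (equivalently $c$) is free. The positivity constraints confine $u$ to the open interval $(u_-,u_+)$ with $u_-=\max_{k\in S_1\cup S_4}v_k$ and $u_+=\min_{k\in S_2\cup S_3}v_k$, which by \eqref{eq:s} contains no pole, so $R$ is real‑analytic and positive there; indices in $S_5$ contribute a constant or trivial factor and, by placing their poles away from the interval, impose no further constraint. Thus $cap_{pos}(G)\geq 3$ is equivalent to the existence of admissible $\{v_k\}$ and $\mu>0$ for which $R(u)=\mu$ has at least three roots in $(u_-,u_+)$.

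I would analyze this through $\Phi:=\log R$, using the two structural identities
\[
\Phi'(u)=\sum_{k}\frac{\alpha_k}{u-v_k},\qquad \Phi''(u)=-\sum_{k}\frac{\alpha_k}{(u-v_k)^2},
\]
the sums running over $S_1\cup S_2\cup S_3\cup S_4$. On $(u_-,u_+)$ each term of $\Phi'$ is positive for $k\in S_1\cup S_2$ and negative for $k\in S_3\cup S_4$; hence if the sequence $\{\alpha_k\gamma_k\}$ keeps a constant sign then $\Phi'$ is single‑signed, $R$ is monotone, and $R=\mu$ has at most one root. Likewise, if $\{\alpha_k\}$ keeps a constant sign then $\Phi''$ is single‑signed, so $\Phi$ is concave or convex and $R=\mu$ has at most two roots. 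These two observations give case (a) at once (one nonempty set means both sequences are constant‑signed) and prove the necessity of condition~(1) in~(b); for three or four nonempty sets both sequences automatically change sign, which is why (c) and (d) carry no sign‑change hypothesis.

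For the remaining, quantitative part I would first record the boundary behaviour: as $u\to u_\pm$ the vanishing extreme coordinate forces $R\to 0$ if the extreme pole lies in $S_1$ or $S_3$ and $R\to+\infty$ if it lies in $S_4$ or $S_2$ (when a side is empty the endpoint is $\pm\infty$ and $R$ there is governed by the total degree $\sum_k\alpha_k$). A short crossing‑count lemma then shows that three roots are attainable exactly when $\Phi'$ can be made to change sign twice compatibly with these limits. Sufficiency I would prove constructively: using the freedom to place poles, concentrate the residues of one set to manufacture a local maximum of $R$ and those of an opposing set to manufacture an adjacent local minimum, then choose $\mu$ strictly between the two critical values; the thresholds of the form $\sum_{i\in S}|\alpha_i|>\min_{i\in S'}|\alpha_i|$ (in either direction) are precisely the conditions under which such a passage of $\Phi'$ through zero can be forced, the sum measuring the strongest contribution a set can concentrate and the minimum measuring the smallest single residue that must be overcome near the boundary pole. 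Necessity runs the comparison in reverse: when every relevant inequality fails, one side of $\Phi'$ dominates throughout $(u_-,u_+)$, the second sign change is impossible, and $R=\mu$ has at most two roots. The bookkeeping of which extreme pole may be sent to $u_\pm$ (free when several of $S_1,\dots,S_4$ are nonempty) produces the disjunctive forms in (c) and in \eqref{eq:cond1}--\eqref{eq:cond2} of (d).

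The hard part will be the sharp quantitative equivalence between realizability of the second sign change of $\Phi'$ and the inequalities relating $\sum|\alpha_i|$ and $\min|\alpha_i|$. This demands an extremal analysis of the rational function $\Phi'$ as the poles migrate to the boundary of the admissible configuration --- coalescing a set's poles to concentrate its residue and driving the critical single pole to $u_\pm$ --- together with a matching bound, uniform over all admissible placements, when the inequalities fail. Keeping this construction and this bound consistent across the $0/\infty$ boundary patterns, the empty‑side (degree‑at‑infinity) situations, and all sub‑cases of (b)--(d) is what forces the four‑way split and is where the bulk of the work lies.
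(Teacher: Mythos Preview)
Your reduction to the scalar equation $\Phi(u)=\log\mu$ on an interval, with the sign analysis of $\Phi'$ and $\Phi''$, is exactly the paper's framework (Lemma~\ref{lm:g}, Lemma~\ref{lm:sign}), and it disposes of (a) and of condition~(1) in~(b) just as you say. The sufficiency sketch---concentrate the poles of one set, isolate a single pole realizing $\min_{i\in S'}|\alpha_i|$ on the other side, and check the sign of $\Phi''$ at the forced critical point---is also the paper's construction (Lemmas~\ref{lm:sufficient}, \ref{lm:sufficient2}, \ref{lm:case32}).

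There is, however, a genuine gap in your necessity argument. The claim that when the inequalities fail ``one side of $\Phi'$ dominates throughout $(u_-,u_+)$'' is not what happens: $\Phi'$ may still change sign once. What the paper proves is that at \emph{every} critical point $z^*$ of $\Phi$, the sign of $\Phi''(z^*)$ is forced. This comes from a Cauchy--Schwarz/power--mean comparison: writing $b_k=|\gamma_k/(\gamma_k z+d_k)|$, one has $\bigl(\sum_{S}|\alpha_k|\bigr)\bigl(\sum_{S}|\alpha_k|b_k^2\bigr)\ge\bigl(\sum_{S}|\alpha_k|b_k\bigr)^2$ and $\min_{S}|\alpha_k|\cdot\sum_{S}|\alpha_k|b_k^2\le\bigl(\sum_{S}|\alpha_k|b_k\bigr)^2$; feeding these into the expression for $\Phi''$ and using $\Phi'(z^*)=0$ yields two-sided bounds of the form in Lemma~\ref{lm:g''} (and Lemma~\ref{lm:g''2}), from which the $\sum>\min$ thresholds drop out sharply. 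Your ``extremal analysis as the poles migrate'' is the right intuition for why these thresholds are tight, but without the Cauchy step you will not get a uniform bound valid for \emph{all} admissible pole placements simultaneously. The strict versus non-strict inequality also needs care: equality in the Cauchy bound forces the corresponding set to be a singleton, which combined with $cap_{pos}(G)<\infty$ (Lemma~\ref{lm:14}) yields strictness.

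A second, smaller gap concerns part~(d). When all four of \eqref{eq:cond1}--\eqref{eq:cond2} fail, the above bounds do not immediately give a contradiction; instead they force each $S_k$ to be a singleton with $|\alpha_1|=|\alpha_4|$ and $|\alpha_2|=|\alpha_3|$. This residual case is not ruled out by your crossing-count heuristic and requires a separate direct calculation (the paper's Lemma~\ref{lm:case31}): one checks that for every placement of the four poles, $\Phi''$ keeps a constant sign on the zero set of $\Phi'$, so $\Phi'$ has at most one root. Your proposal does not anticipate this degenerate sub-case.
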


\begin{example}These examples illustrate how Theorem \ref{thm:main} works.
  \begin{itemize}
    \item [(a)] Consider the following network: $$G_a:\begin{aligned}
      2X_1+2X_2&\rightarrow 3X_1+3X_2\\
      X_1+X_2&\rightarrow  0
    \end{aligned}$$
It is straightforward to check that
$$\begin{aligned}
      &\alpha_1=1,~ \alpha_2=1,\;
      \gamma_1=1,~ \gamma_2=1. \\
      &S_1=\{1,2\},~ S_2=\emptyset,~ S_3=\emptyset, ~ S_4=\emptyset,~ S_5=\emptyset.
    \end{aligned}$$
So by Theorem \ref{thm:main} (a), we have $cap_{pos}(G_a)< 3$.
    \item [(b)]
    Consider the following network:$$G_b:\begin{aligned}
      3X_1+2X_2+X_3&\rightarrow 4X_1+3X_2+2X_3\\
      X_1+X_2+3X_3&\rightarrow  2X_3
    \end{aligned}$$
It is straightforward to check the following results:
$$\begin{aligned}
      &\alpha_1=2,~ \alpha_2=1,~ \alpha_3=-2 ~(\text{change signs}).\\
      &\gamma_1=1,~ \gamma_2=1,~ \gamma_3=1.\\
      &\alpha_1\gamma_1=2,~ \alpha_2\gamma_2=1,~ \alpha_3\gamma_3=-2 ~(\text{change signs}).\\
      &S_1=\{1,2\},~ S_2=\emptyset,~ S_3=\emptyset, ~ S_4=\{3\},~ S_5=\emptyset.\\
      &\{k,\ell\}=\{1,4\}.\\
      &\sum_{i\in S_1}|\alpha_i|=|\alpha_1|+|\alpha_2|=3>2=|\alpha_3|=\min_{i\in S_4}\{|\alpha_i|\}.\\
      &\sum_{i\in S_4}|\alpha_i|=|\alpha_3|=2>1=|\alpha_2|=\min_{i\in S_1}\{|\alpha_i|\}.
    \end{aligned}$$
So by Theorem \ref{thm:main} (b), we have $cap_{pos}(G_b)\geq 3$. One can also check that for                           $c_1=19999/2,~c_2=9999,~\kappa_1=26879/4294967296,$ and  $\kappa_2=1/2$,
    the network has $3$ positive steady states: $$\begin{aligned}
      &x^{(1)}=(9999.50,~0.00020,~0.50),\\
      &x^{(2)}=(11716.81,~1717.31,~1717.81),\\
      &x^{(3)}=(68177.66,~58178.16,~58178.66).
    \end{aligned}$$
\item [(c)] Consider the following network:$$G_c:\begin{aligned}
      2X_1+X_2&\rightarrow 3X_1+X_3\\
      X_1+2X_2+2X_3&\rightarrow 3X_2+X_3\
    \end{aligned}$$
It is straightforward to check the following results:
    $$\begin{aligned}
      &\alpha_1=1,~ \alpha_2=-1, ~\alpha_3=-2.\\
      &\gamma_1=1,~ \gamma_2=-1, ~\gamma_3=1.\\
      &\alpha_1\gamma_1=1,~ \alpha_2\gamma_2=1,~ \alpha_3\gamma_3=-2.\\
      &S_1=\{1\},~ S_2=\{2\},~ S_3=\emptyset, ~ S_4=\{3\},~ S_5=\emptyset.\\
      &sign_{\alpha}(1)=-sign_{\alpha}(2)=-sign_{\alpha}(4),\\
      &sign_{\alpha\gamma}(4)=-sign_{\alpha\gamma}(1)=-sign_{\alpha\gamma}(2).\\
      &k=1,~ \ell=4, ~\min_{i\in S_1}\{|\alpha_i|\}=|\alpha_1|=1<2=|\alpha_3|=\sum_{i\in S_4}|\alpha_i|.
    \end{aligned}$$ So by Theorem \ref{thm:main} (c), we have $cap_{pos}(G_c)\geq 3$. One can also check that for                           $c_1=-10001,~c_2=-1,~\kappa_1=102624395269/16384,$ and  $\kappa_2=1/2$,
    the network has $3$ positive steady states: $$\begin{aligned}
      &x^{(1)}=(0.00080,~10001.00,~1.00),\\
      &x^{(2)}=(1465.72,~8535.28,~1466.72),\\
      &x^{(3)}=(8533.28,~1467.72,~8534.28).
    \end{aligned}$$
    \item [(d)] Consider the following network:$$G_d:\begin{aligned}
      3X_1+X_2+X_3&\rightarrow 4X_1+X_4\\
      X_1+2X_2+X_4&\rightarrow 3X_2+X_3
    \end{aligned}$$
It is straightforward to check that:
    $$\begin{aligned}
      &\alpha_1=2,~ \alpha_2=-1, ~\alpha_3=1,~\alpha_4=-1.\\
      &\gamma_1=1,~ \gamma_2=-1, ~\gamma_3=-1,~\gamma_4=1.\\
      &S_1=\{1\},~ S_2=\{2\},~ S_3=\{3\}, ~ S_4=\{4\},~ S_5=\emptyset.\\
      &\sum_{i\in S_1}|\alpha_i|=|\alpha_1|=2>1=|\alpha_4|=\min_{i\in S_4}\{|\alpha_i|\}.
    \end{aligned}$$ So by Theorem \ref{thm:main} (d), we have $cap_{pos}(G_d)\geq 3$.
    One can also check that for                           $c_1=-10003,~c_2=-10002,~c_3=1,~\kappa_1=262251/4194304,$ and  $\kappa_2=1/2$,
    the network has $3$ positive steady states: $$\begin{aligned}
      &x^{(1)}=(1.17,~10001.83,~10000.83,~0.17),\\
      &x^{(2)}=(6.83,~9996.17,~9995.17,~5.83),\\
      &x^{(3)}=(10002.00,~1.00,~0.00080,~10001.00).
    \end{aligned}$$
  \end{itemize}
  In the above networks, we find the witnesses for admitting $3$ positive steady states by the proofs of Theorem \ref{thm:main}. See the subsection.
\end{example}

\subsection{Proof of Theorem \ref{thm:main}}
Throughout this section, we assume that any network $G$ mentioned in the lemmas has the form \eqref{eq:2network}, and it has a one-dimensional stoichiometric subspace.

We introduce a new variable $z$, and a new parameter $d_1$ such that
\begin{align}\label{eq:x1}
x_1 = \gamma_1 z + d_1.
\end{align}
Then, the conservation laws (see $h_i$ defined in \eqref{eq:h}) can be written as
\begin{align}\label{eq:xi}
x_i = \gamma_i z + d_i, \;\;\;\text{where}\;\;\; d_i := \frac{\gamma_i}{\gamma_1}d_1 - \frac{c_{i-1}}{\gamma_1}, \; i\in \{2, \ldots, s\}.
\end{align}
Also, we recall \eqref{eq:scalar} that $\lambda_1=1$, and $\lambda_2$ is a non-zero real number.

\begin{lemma}\label{lm:g}
  A network $G$ admits at least $N$ $(N\geq 0)$ positive steady states if and only if
  there exist  rate constants $\kappa_1, \kappa_2\in {\mathbb R}_{>0}$ and  real numbers $d_1,\dots, d_s$ such that the
  equation
  \begin{align}\label{eq:lmg}
  \sum\limits_{k=1}^{s}\alpha_k\ln(\gamma_kz+d_k) \; =\; K, \;
  \text{where}\;K:=\ln(-\lambda_2\kappa_2/\lambda_1\kappa_1)
  \end{align}
  has at least $N$ different solutions for $z$ in the open interval
  {\footnotesize
  \begin{align}
 &I:=(a, M), \text{where}  \label{eq:interval}\\
 &a:=
 \begin{cases}
  \max\limits_{\{k|\gamma_k>0\}}\{-\frac{d_{k}}{\gamma_k}\},& \text{if} ~{\{k|\gamma_k>0\}}\neq \emptyset\\
  -\infty , &\text{if}~ {\{k|\gamma_k>0\}}= \emptyset
\end{cases}
 ,
  M:=
\begin{cases}
  \min\limits_{\{k|\gamma_k<0\}}\{-\frac{d_{k}}{\gamma_k}\},& \text{if} ~{\{k|\gamma_k<0\}}\neq \emptyset\\
  +\infty , &\text{if}~ {\{k|\gamma_k<0\}}= \emptyset
\end{cases}.  \label{eq:am}
\end{align}
}
  \end{lemma}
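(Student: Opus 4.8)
The plan is to collapse the steady-state system \eqref{eq:h1}--\eqref{eq:h} into a single scalar equation in one variable, exploiting that for a bi-reaction network $h_1$ is a binomial (after factoring out $\beta_{11}-\alpha_{11}$) and that the conservation laws \eqref{eq:h} parametrize every stoichiometric compatibility class as a line. First I would rewrite $h_1 = 0$ on the positive orthant: since $\beta_{11}-\alpha_{11} = \gamma_1 \neq 0$ by Assumption \ref{assumption1}, at a positive point the equation $h_1 = 0$ is equivalent to $\lambda_1\kappa_1\prod_k x_k^{\alpha_{k1}} + \lambda_2\kappa_2\prod_k x_k^{\alpha_{k2}} = 0$, and dividing by the positive monomial $\prod_k x_k^{\alpha_{k1}}$ this becomes $\prod_{k=1}^s x_k^{\alpha_k} = -\lambda_2\kappa_2/(\lambda_1\kappa_1)$, where $\alpha_k = \alpha_{k1}-\alpha_{k2}$. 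Taking logarithms, which is legitimate precisely when the right-hand side is positive, gives $\sum_k \alpha_k \ln x_k = K$ with $K$ as in \eqref{eq:lmg}.

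Next I would substitute the conservation-law parametrization. By \eqref{eq:x1}--\eqref{eq:xi}, the assignment $z \mapsto (x_1, \dots, x_s) = (\gamma_1 z + d_1, \dots, \gamma_s z + d_s)$ is an affine bijection between $z \in \R$ and the line $\{x \mid h_2 = \cdots = h_s = 0\}$; the correspondence between $(d_1, \dots, d_s)$ and the data $(d_1, c_1, \dots, c_{s-1})$ is the invertible relation $c_{i-1} = \gamma_i d_1 - \gamma_1 d_i$ coming from \eqref{eq:xi}. Under this substitution the logarithmic equation becomes exactly \eqref{eq:lmg}, while positivity $x_k > 0$ becomes $\gamma_k z + d_k > 0$ for every $k$. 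For indices with $\gamma_k \neq 0$ these inequalities are equivalent to $z \in (a, M) = I$ as in \eqref{eq:interval}--\eqref{eq:am}; for indices with $\gamma_k = 0$ the coordinate $x_k = d_k$ is constant, and positivity reduces to the freely arrangeable requirement $d_k > 0$, which also makes the corresponding logarithm in \eqref{eq:lmg} well defined.

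With this dictionary the two directions are immediate. For the forward direction, a witness $(\kappa_1, \kappa_2, c)$ with at least $N$ positive steady states in $\mathcal{P}_c$ gives, after fixing any $d_1$ and defining the $d_i$ by \eqref{eq:xi}, at least $N$ values $z^*$ solving \eqref{eq:lmg} in $I$; these are distinct because $x_1^* = \gamma_1 z^* + d_1$ depends injectively on $z^*$ (here $\gamma_1 \neq 0$) and distinct steady states in the one-dimensional class $\mathcal{P}_c$ have distinct coordinates. Conversely, given $\kappa_1, \kappa_2 > 0$ and $d_1, \dots, d_s$ with at least $N$ solutions $z^* \in I$, setting $c_{i-1} := \gamma_i d_1 - \gamma_1 d_i$ and $x_k^* := \gamma_k z^* + d_k$ yields at least $N$ distinct positive steady states in $\mathcal{P}_c$.

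The points I would treat explicitly rather than as routine are two. First, the equivalence of ``taking logarithms'' with solvability forces the sign condition $-\lambda_2\kappa_2/(\lambda_1\kappa_1) > 0$, i.e.\ $\lambda_2 < 0$; when $\lambda_2 > 0$ the reduced equation has no positive solution, matching $cap_{pos}(G) = 0$ by Lemma \ref{lm:t<m}, so the statement holds vacuously for $N \geq 1$. Second, the bookkeeping for indices with $\gamma_k = 0$, whose positivity is governed not by $I$ but by the choice $d_k > 0$. The main obstacle is not any single computation but verifying that the affine reparametrization is a genuine count-preserving bijection, neither merging distinct steady states nor producing spurious ones on the boundary of $I$ — which is exactly where the one-dimensionality of the stoichiometric subspace (so that $\mathcal{P}_c$ is a line faithfully coordinatized by $z$) is used.
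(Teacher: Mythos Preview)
Your proof is correct and follows essentially the same approach as the paper: reduce $h_1=0$ to a binomial, divide by the positive monomial $\prod_k x_k^{\alpha_{k2}}$, take logarithms, and use the affine parametrization $x_k=\gamma_k z+d_k$ of the conservation-law line to translate positivity into $z\in I$. You are in fact slightly more explicit than the paper on two points the paper glosses over---the sign requirement $\lambda_2<0$ needed for $K$ to be defined, and the handling of indices with $\gamma_k=0$---but the overall argument is the same.
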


\begin{proof}
By the steady-state system \eqref{eq:h1}--\eqref{eq:h} and by \eqref{eq:x1}--\eqref{eq:xi}, the network $G$ admits at least 3 positive steady states if and only if there exists a rate-constant $\kappa=(\kappa_1, \kappa_2)\in {\mathbb R}^2_{>0}$ and real numbers $d_1,\dots,d_{s}$ such that the equation
\begin{align}\label{eq:2f}
(\beta_{11}-\alpha_{11})\sum_{j=1}^2\lambda_j\kappa_j\prod\limits_{k=1}^s\left(\gamma_kz+d_k\right)^{\alpha_{kj}}=0.
\end{align}
 has at least 3 different solutions for $z$ such that for any $k=1, \ldots, s$, $\gamma_kz+d_k>0$.
 Notice that  $\gamma_kz+d_k>0$ for any $k$ if and only if
$z$ belongs to the open interval $I$ defined in \eqref{eq:interval}.
Since $\beta_{11}-\alpha_{11}\neq 0$, the equation \eqref{eq:2f} is equivalent to
$$\lambda_1\kappa_1\prod\limits_{k=1}^s\left(\gamma_kz+d_k\right)^{\alpha_{k1}}+
\lambda_2\kappa_2\prod\limits_{k=1}^s\left(\gamma_kz+d_k\right)^{\alpha_{k2}}=0.$$
For any $z\in I$, the above equation is equivalent to
$$\prod\limits_{k=1}^s\left(\gamma_kz+d_k\right)^{\alpha_{k}}
=-\frac{\lambda_2\kappa_2}{\lambda_1\kappa_1}.$$
Here, recall that $\alpha_k=\alpha_{k1}-\alpha_{k2}$, see \eqref{eq:ar}.
Take logarithm on both sides, we have $$\sum\limits_{k=1}^s\alpha_k\ln\left(\gamma_kz+d_k\right)=
\ln\left(-\frac{\lambda_2\kappa_2}{\lambda_1\kappa_1}\right).$$
So, we have the conclusion.
\end{proof}


\begin{lemma}\label{lm:sign} We have the
following statements.
\begin{enumerate}
  \item[(a)] If the network $G$ admits at least $2$ positive steady states, then the numbers in the sequence $\{\alpha_k\gamma_k\}_{k=1}^{i_4-1}$ change signs.
  \item[(b)] If the network $G$ admits at least $3$ positive steady states, then the numbers in the sequence $\{\alpha_k\}_{k=1}^{i_4-1}$change signs.
\end{enumerate}
\end{lemma}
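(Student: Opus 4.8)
The plan is to reduce both statements to a monotonicity/convexity analysis of the single function
$$g(z) \;:=\; \sum_{k=1}^{s} \alpha_k \ln(\gamma_k z + d_k)$$
furnished by Lemma~\ref{lm:g}, evaluated on the open interval $I = (a, M)$ of \eqref{eq:interval} where every argument $\gamma_k z + d_k$ is positive. By Lemma~\ref{lm:g}, admitting at least $N$ positive steady states is equivalent to the existence of rate constants $\kappa_1,\kappa_2$ and shifts $d_1, \ldots, d_s$ for which $g(z) = K$ has at least $N$ solutions in $I$; so in each part I would assume such data exist and derive a bound on the number of solutions that forces the claimed sign change. A preliminary observation I would record is that the indices $k \in S_5$ contribute nothing to the $z$-dependence of $g$: if $\alpha_k = 0$ the term vanishes, and if $\gamma_k = 0$ the term $\alpha_k \ln(d_k)$ is constant. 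Hence every derivative of $g$ is a sum over $k \in \{1, \ldots, i_4 - 1\} = S_1 \cup S_2 \cup S_3 \cup S_4$ only (recall the partition \eqref{eq:s}).

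For part (a) I would differentiate once:
$$g'(z) \;=\; \sum_{k=1}^{i_4-1} \frac{\alpha_k \gamma_k}{\gamma_k z + d_k}.$$
On $I$ each denominator is positive, so the $k$-th term carries the sign of $\alpha_k \gamma_k$. If the sequence $\{\alpha_k \gamma_k\}_{k=1}^{i_4-1}$ does \emph{not} change signs, then every term is of one sign, whence $g'$ is strictly positive (or strictly negative) throughout $I$, so $g$ is strictly monotonic and $g(z) = K$ has at most one solution. By Lemma~\ref{lm:g} this means $G$ admits at most one positive steady state, contradicting the hypothesis. The contrapositive is exactly statement (a).

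For part (b) I would differentiate twice:
$$g''(z) \;=\; -\sum_{k=1}^{i_4-1} \frac{\alpha_k \gamma_k^2}{(\gamma_k z + d_k)^2}.$$
On $I$ the factor $\gamma_k^2 / (\gamma_k z + d_k)^2$ is strictly positive, so the $k$-th term carries the sign of $-\alpha_k$. If $\{\alpha_k\}_{k=1}^{i_4-1}$ does not change signs, then $g''$ has constant sign on $I$, so $g$ is strictly convex or strictly concave there. A strictly convex (or concave) function meets any horizontal line at most twice --- formally because $g'$ is then strictly monotonic and has at most one zero, so $g$ switches between decreasing and increasing at most once --- giving at most two solutions of $g(z) = K$. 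This contradicts the existence of three positive steady states, and the contrapositive yields (b).

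The argument is short once Lemma~\ref{lm:g} is in hand, and I expect no serious obstacle; the only points requiring care are the bookkeeping that the $S_5$ terms drop out so that the derivative sums run over $\{1, \ldots, i_4 - 1\}$ exactly, and the clean justification of the ``at most two solutions for a strictly convex/concave function'' step via the monotonicity of $g'$.
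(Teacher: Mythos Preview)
Your proposal is correct and follows essentially the same approach as the paper: both reduce to analyzing $g(z)=\sum_k \alpha_k\ln(\gamma_k z+d_k)$ via Lemma~\ref{lm:g}, drop the $S_5$ terms, and read off the sign conditions from $g'$ and $g''$. The only cosmetic difference is framing: the paper argues directly via Rolle's theorem (two solutions of $g=K$ force a zero of $g'$; three force a zero of $g''$), whereas you take the contrapositive through monotonicity/convexity---logically the same argument.
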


\begin{proof}
Define
\begin{align}\label{eq:g}
g(z)\;:= \;  \sum\limits_{k=1}^{s}\alpha_k\ln(\gamma_kz+d_k).
\end{align}

(a) By Lemma \ref{lm:g}, if the network $G$ admits at least $2$ positive steady states, then
there exists $K\in {\mathbb R}$ such that $g(z)=K$ has at least 2 different solutions in the interval $I$ defined in \eqref{eq:interval}. It follows from Rolle's Theorem that $g'(z)=0$ has at least $1$ solution in $I$. Note that $$g'(z)=\sum\limits_{k=1}^{s}\frac{\alpha_k\gamma_k}{\gamma_kz+d_k}=\sum\limits_{k=1}^{i_4-1}\frac{\alpha_k\gamma_k}{\gamma_kz+d_k},$$
where the last equality follows from \eqref{eq:s} that for any $k\in \{i_4, \ldots, s\}$, $\alpha_k\gamma_k=0$.
Note also, if $z\in I$, then for any $k\in \{1, \ldots, s\}$, $\gamma_kz+d_k>0$.
  So, if  $g'(z)=0$ has a solution in $I$, then the numbers in the sequence $\{\alpha_k\gamma_k\}_{k=1}^{i_4-1}$ change signs.

(b) By Lemma \ref{lm:g}, if the network $G$ admits at least $3$ positive steady states, then
there exists $K\in {\mathbb R}$ such that $g(z)=K$ has at least 3 different solutions in the interval $I$ defined in \eqref{eq:interval}.
So, $g''(z)=0$ has at least $1$ solution in $I$.
 Note that $$g''(z)=-\sum\limits_{k=1}^{i_4-1}\frac{\alpha_k\gamma_k^2}{\left(\gamma_kz+d_k\right)^2}.$$
  So, if $g''(z)$ has a solution in $I$, then the numbers in the sequence $\{\alpha_k\}_{k=1}^{i_4-1}$ change signs.
\end{proof}

\begin{lemma}\label{lm:3roots}
  Given a univariate $C^2$-function $g(z)$ defined on an open interval $(a,M)\subset {\mathbb R}$, suppose $\lim\limits_{z\rightarrow M}g(z)=\infty$. If there exists  $z^*\in (a,M)$ such that $g'(z^*)=0$ and $$g''(z^*)\lim\limits_{z\rightarrow M}g(z)<0,$$ then there exists $K \in \mathbb{R}$ such that $g(z)=K$ has at least 3 different solutions in $(a,M)$.
\end{lemma}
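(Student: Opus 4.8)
The plan is to reduce to the case $\lim_{z\to M} g(z) = +\infty$ and then exploit that the sign hypothesis forces $z^*$ to be a strict local maximum whose value sits strictly above the subsequent divergent behavior of $g$; a horizontal line $y = K$ placed just below $g(z^*)$ will then be crossed three times.

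First I would normalize the sign of the limit at $M$. If $\lim_{z\to M} g(z) = -\infty$, I replace $g$ by $-g$: this is again $C^2$ on $(a,M)$, it sends the limit to $+\infty$, and it flips the sign of $g''(z^*)$, so the hypothesis $g''(z^*)\lim_{z\to M} g(z) < 0$ is preserved. Hence I may assume $\lim_{z\to M} g(z) = +\infty$, and in this case the hypothesis reads simply $g''(z^*) < 0$.

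Next, from $g'(z^*) = 0$ together with $g''(z^*) < 0$, a second-order Taylor expansion at $z^*$ shows that $z^*$ is a strict local maximum: there is $\delta > 0$ with $[z^*-\delta,\, z^*+\delta] \subset (a,M)$ and $g(z) < g(z^*)$ for all $z$ with $0 < |z - z^*| \le \delta$. Putting $z_1 := z^*-\delta$ and $z_2 := z^*+\delta$, I choose any level $K$ satisfying $\max\{g(z_1),\, g(z_2)\} < K < g(z^*)$. Applying the Intermediate Value Theorem on $[z_1, z^*]$ and on $[z^*, z_2]$ yields one solution of $g(z)=K$ in each of $(z_1, z^*)$ and $(z^*, z_2)$. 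For the third solution I invoke the divergence at $M$: since $g(z_2) < K$ while $g(z)\to +\infty$ as $z\to M$, there is some $z_3 \in (z_2, M)$ with $g(z_3) > K$, and the Intermediate Value Theorem on $[z_2, z_3]$ produces a solution in $(z_2, z_3)$. Labelling these $r_1 < z^* < r_2 < z_2 < r_3$, their interlacing with $z^*$ and $z_2$ forces them to be three distinct points, giving the claim.

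I do not expect a genuine obstacle here, since the argument is a clean combination of the local-maximum structure at $z^*$ and the boundary divergence via the Intermediate Value Theorem. The only points requiring care are bookkeeping ones: stating the sign-normalization step explicitly so that the single hypothesis $g''(z^*)\lim_{z\to M} g(z) < 0$ covers both possible signs of the limit at $M$, keeping $z_1$ and $z_2$ inside $(a,M)$, and verifying that the three roots lie in disjoint subintervals so that the count of three is honest.
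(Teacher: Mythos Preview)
Your proposal is correct and follows essentially the same approach as the paper: normalize to $\lim_{z\to M}g(z)=+\infty$ so that $g''(z^*)<0$, use this to exhibit $z^*$ as a strict local maximum, choose a level $K$ just below $g(z^*)$, and pick up two roots near $z^*$ plus a third from the divergence at $M$ via the Intermediate Value Theorem. The only cosmetic difference is that the paper argues the strict local maximum by the continuity of $g''$ and the resulting monotonicity of $g'$ rather than by Taylor expansion, and picks the explicit level $K=g(z^*)-\tfrac{1}{2}\min\{g(z^*)-g(z^*-\delta),\,g(z^*)-g(z^*+\delta)\}$.
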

\begin{proof}
Since $g''(z^*)\lim\limits_{x\rightarrow M}g(z)<0$, we have $g''(z^*)\neq 0$. Without loss of generality, we assume that $g''(z^*)<0$. And thus,
$\lim\limits_{z\rightarrow M}g(z)=+\infty$. Since $g''(z)$ is continuous, there exists $\delta>0$ such that $g''(z)<0$ in $(z^*-\delta,z^*+\delta)\subset (a,M)$. So, $g'(z)$  strictly decreases in $(z^*-\delta,z^*+\delta)$. Note that $g'(z^*)=0$. So, $g'(z)>0$ in $(z^*-\delta,z^*)$, and $g'(z)<0$ in $(z^*,z^*+\delta)$. Hence, $g(z)$ strictly  increases in $(z^*-\delta,z^*)$ and strictly  decreases in $(z^*,z^*+\delta)$. Let $\epsilon=\min\{g(z^*)-g(z^*-\delta),g(z^*)-g(z^*+\delta)\}$ and let $K= g(z^*)-\epsilon/2$, then $g(z)=K$ has $2$ solutions in $(z^*-\delta,z^*+\delta)$. Since $\lim\limits_{z\rightarrow M}g(z)=+\infty$ and $g(z^*+\delta)<K$, $g(z)=K$ has at least $1$ solution in $(z^*+\delta,M)$. Overall, $g(z)=K$ has at least $3$ different solutions in $(a,M)$.
\end{proof}

For simplicity, for any $k\in \{1,\dots, s\}$, we define
\begin{align}\label{eq:ab}
b_k(z):=\left|\frac{\gamma_k}{\gamma_k z+d_k}\right|.
\end{align}
 By \eqref{eq:s}, for $g(z)$ defined in \eqref{eq:g},   for any $z$ in the interval $I$ defined in \eqref{eq:interval}, we have
\begin{align}
 g'(z)&=\sum\limits_{k\in S_1}|\alpha_k|b_k+\sum\limits_{k\in S_2}|\alpha_k|b_k-\sum\limits_{k\in S_3}|\alpha_k|b_k-\sum\limits_{k\in S_4}|\alpha_k|b_k, \label{eq:gd1}\\
 g''(z)&=-\sum\limits_{k\in S_1}|\alpha_k|b_k^2+\sum\limits_{k\in S_2}|\alpha_k|b_k^2-\sum\limits_{k\in S_3}|\alpha_k|b_k^2+\sum\limits_{k\in S_4}|\alpha_k|b_k^2.\label{eq:gd2}
 \end{align}

\begin{lemma}\label{lm:g''}
 Suppose that $S_2=S_3=\emptyset$, $S_1\neq\emptyset,S_4\neq\emptyset$.
 Let $g(z)$ be the function defined in \eqref{eq:g}, and let $I$ be the interval defined in \eqref{eq:interval}.
 If there exists $z^*\in I$ such that $g'(z^*)=0$, then for $z=z^*$, we have the following inequalities:
 \begin{align}\label{eq:ineq}
   -\frac{1}{\min\limits_{k\in S_1}\{|\alpha_k|\}}+\frac{1}{\sum\limits_{k\in S_4}|\alpha_k|}\leq
 \frac{g''(z)}{\left(\sum\limits_{k\in S_1}|\alpha_k|b_k\right)^2}\leq -\frac{1}{\sum\limits_{k\in S_1}|\alpha_k|}+\frac{1}{\min\limits_{k\in S_4}\{|\alpha_k|\}}.
 \end{align}
 More than that, if the first equality holds, then $S_1$ is singleton, and if the second equality holds, then $S_4$ is singleton.
\end{lemma}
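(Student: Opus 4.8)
The plan is to exploit the stationarity condition $g'(z^*)=0$ together with the hypothesis $S_2=S_3=\emptyset$ to reduce the statement to a comparison of two weighted power sums, and then to establish a sharp two-sided bound by elementary inequalities (Cauchy--Schwarz on one side, a crude domination on the other). First I would substitute $S_2=S_3=\emptyset$ into \eqref{eq:gd1}--\eqref{eq:gd2} to get, at any $z\in I$,
$$g'(z)=\sum_{k\in S_1}|\alpha_k|b_k-\sum_{k\in S_4}|\alpha_k|b_k,\qquad g''(z)=-\sum_{k\in S_1}|\alpha_k|b_k^2+\sum_{k\in S_4}|\alpha_k|b_k^2.$$
Evaluating at $z^*$ and using $g'(z^*)=0$ gives the key identity $P:=\sum_{k\in S_1}|\alpha_k|b_k(z^*)=\sum_{k\in S_4}|\alpha_k|b_k(z^*)$, which lets me rewrite the quantity to be bounded as
$$\frac{g''(z^*)}{P^2}=-Q_1+Q_4,\quad\text{where } Q_S:=\frac{\sum_{k\in S}|\alpha_k|b_k^2}{\big(\sum_{k\in S}|\alpha_k|b_k\big)^2}\ \ (S\in\{S_1,S_4\}).$$

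The heart of the argument is the auxiliary claim that for any index set $S$ with all $|\alpha_k|>0$ and all $b_k>0$ one has
$$\frac{1}{\sum_{k\in S}|\alpha_k|}\ \le\ Q_S\ \le\ \frac{1}{\min_{k\in S}\{|\alpha_k|\}}.$$
Setting $w_k:=|\alpha_k|b_k>0$, so that $\sum_{k\in S}|\alpha_k|b_k^2=\sum_{k\in S}w_k^2/|\alpha_k|$, the lower bound is exactly Cauchy--Schwarz,
$$\Big(\sum_{k\in S}w_k\Big)^2=\Big(\sum_{k\in S}\sqrt{|\alpha_k|}\cdot\tfrac{w_k}{\sqrt{|\alpha_k|}}\Big)^2\le\Big(\sum_{k\in S}|\alpha_k|\Big)\Big(\sum_{k\in S}\tfrac{w_k^2}{|\alpha_k|}\Big),$$
while the upper bound follows from $1/|\alpha_k|\le 1/\min_{k\in S}|\alpha_k|$ combined with $\sum_{k\in S}w_k^2\le(\sum_{k\in S}w_k)^2$ (valid for nonnegative terms). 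Applying this claim to $S_1$ and $S_4$ and substituting into $-Q_1+Q_4$ yields \eqref{eq:ineq}: the left inequality uses $Q_1\le 1/\min_{k\in S_1}|\alpha_k|$ and $Q_4\ge 1/\sum_{k\in S_4}|\alpha_k|$, and the right inequality uses $Q_1\ge 1/\sum_{k\in S_1}|\alpha_k|$ and $Q_4\le 1/\min_{k\in S_4}|\alpha_k|$.

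For the two equality statements I would track exactly when the upper bound $Q_S\le 1/\min_{k\in S}|\alpha_k|$ is attained. Since every $b_k(z^*)>0$ (because $z^*\in I$ forces $\gamma_kz^*+d_k>0$, hence $b_k=|\gamma_k|/(\gamma_kz^*+d_k)>0$), all $w_k$ are strictly positive, so $\sum_{k\in S}w_k^2<(\sum_{k\in S}w_k)^2$ whenever $|S|\ge 2$; therefore $Q_S=1/\min_{k\in S}|\alpha_k|$ forces $|S|=1$. The left equality in \eqref{eq:ineq} requires $Q_1=1/\min_{k\in S_1}|\alpha_k|$, so $S_1$ is a singleton, and the right equality requires $Q_4=1/\min_{k\in S_4}|\alpha_k|$, so $S_4$ is a singleton.

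I expect the main obstacle to be the upper bound on $Q_S$ and its equality analysis, rather than the Cauchy--Schwarz lower bound: that bound is a composition of two separate estimates, and one must verify that their simultaneous saturation, under the strict positivity of all $b_k$ on $I$, genuinely collapses $S$ to a single index. The only bookkeeping point to keep in mind is the strict positivity $b_k(z^*)>0$ noted above, which is what makes the domination $\sum w_k^2\le(\sum w_k)^2$ strict for $|S|\ge2$ and thereby pins down the singleton conclusions.
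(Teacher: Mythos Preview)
Your proposal is correct and follows essentially the same route as the paper: both use Cauchy--Schwarz for the lower bound $Q_S\ge 1/\sum_{k\in S}|\alpha_k|$ and the chain $\min_{k\in S}|\alpha_k|\cdot\sum_{k\in S}|\alpha_k|b_k^2\le\sum_{k\in S}|\alpha_k|^2b_k^2\le(\sum_{k\in S}|\alpha_k|b_k)^2$ for the upper bound, then invoke $g'(z^*)=0$ to equate the $S_1$ and $S_4$ first-moment sums. Your introduction of $Q_S$ and the substitution $w_k=|\alpha_k|b_k$ makes the structure slightly more transparent, but the mathematical content, including the equality analysis via strict positivity of the $b_k$ on $I$, is the same as the paper's.
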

\begin{proof}
Notice that by \eqref{eq:interval} and \eqref{eq:ab}, for any $z\in I$, $b_k(z)$ is well-defined and $b_k(z)>0$.
So,  by Cauchy's inequality, we have
\begin{align}\label{eq:cauchy}
\left(\sum\limits_{k\in S_i}|\alpha_k|b_k^2\right)\left(\sum\limits_{k\in S_i}|\alpha_k|\right)\geq\left(\sum\limits_{k\in S_i}|\alpha_k|b_k\right)^2, \;\;i=1, 4.
\end{align}
On the other hand, \begin{align}\label{eq:cauchynext}
 \min\limits_{k\in S_i}\{|\alpha_k|\} \sum\limits_{k\in S_i}|\alpha_k|b_k^2\leq \sum\limits_{k\in S_i}|\alpha_k|^2b_k^2  \leq \left(\sum\limits_{k\in S_i}|\alpha_k|b_k\right)^2,  \;\;i=1, 4.
\end{align}
Note that the last equality holds if and only if $S_i$ is singleton.
If $S_2=S_3=\emptyset$, then
by  \eqref{eq:gd2}, \eqref{eq:cauchy} and \eqref{eq:cauchynext},
we have
$$g''(z)=-\sum\limits_{k\in S_1}|\alpha_k|b_k^2+\sum\limits_{k\in S_4}|\alpha_k|b_k^2$$
and
{\tiny
$$-\frac{\left(\sum\limits_{k\in S_1}|\alpha_k|b_k\right)^2}{\min\limits_{k\in S_1}\{|\alpha_k|\}}+\frac{\left(\sum\limits_{k\in S_4}|\alpha_k|b_k\right)^2}{\sum\limits_{k\in S_4}|\alpha_k|}\leq -\sum\limits_{k\in S_1}|\alpha_k|b_k^2+\sum\limits_{k\in S_4}|\alpha_k|b_k^2\leq -\frac{\left(\sum\limits_{k\in S_1}|\alpha_k|b_k\right)^2}{\sum\limits_{k\in S_1}|\alpha_k|}+\frac{\left(\sum\limits_{k\in S_4}|\alpha_k|b_k\right)^2}{\min\limits_{k\in S_4}\{|\alpha_k|\}}.$$
}Since $g'(z^*)=0$, for $z=z^*$, we have $\sum\limits_{k\in S_1}|\alpha_k|b_k=\sum\limits_{k\in S_4}|\alpha_k|b_k$. So, for $z=z^*$, we have the inequality
\eqref{eq:ineq}.
\end{proof}

\begin{lemma}\label{lm:14}
Suppose $cap_{pos}(G)<+\infty$.
 If $S_2=S_3=\emptyset$, $S_1\neq\emptyset$, and $S_4\neq\emptyset$, then
    $\sum\limits_{i\in S_1}|\alpha_{i}|\neq \sum\limits_{i\in S_4}|\alpha_i|$.
\end{lemma}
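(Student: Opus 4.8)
The plan is to prove the contrapositive: assuming $\sum_{i\in S_1}|\alpha_i|=\sum_{i\in S_4}|\alpha_i|$, I will exhibit parameters for which $G$ has infinitely many positive steady states, contradicting $cap_{pos}(G)<+\infty$. By Lemma \ref{lm:g} it suffices to choose real numbers $d_1,\dots,d_s$ and rate constants $\kappa_1,\kappa_2>0$ so that the function $g(z)=\sum_{k=1}^{s}\alpha_k\ln(\gamma_k z+d_k)$ from \eqref{eq:g} is \emph{identically} equal to $K=\ln(-\lambda_2\kappa_2/\lambda_1\kappa_1)$ on a subinterval of positive length inside $I$; a function that is constant on an interval meets any matching level $K$ infinitely often. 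Since $-\lambda_2\kappa_2/\lambda_1\kappa_1$ must be positive for $K$ to be real, the construction has content precisely in the regime $cap_{pos}(G)>0$, where Lemma \ref{lm:t<m} (applied with $m=2$) forces $\lambda_2<0$; I would state this regime at the outset, noting that then $K$ sweeps out all of $\mathbb{R}$ as $\kappa_1,\kappa_2$ vary.

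The core of the construction exploits $S_2=S_3=\emptyset$: the only indices contributing a non-constant summand to $g$ lie in $S_1\cup S_4$, and for all of these $\gamma_k>0$ by \eqref{eq:s}, while each $k\in S_5$ contributes either nothing (when $\alpha_k=0$) or a constant $\alpha_k\ln d_k$ (when $\gamma_k=0$). The idea is to make the linear forms $\gamma_k z+d_k$ with $k\in S_1\cup S_4$ share a common root by taking $d_k=\gamma_k$, so that $\gamma_k z+d_k=\gamma_k(z+1)$. On the set where all arguments are positive this gives
\[
g(z)=\Big(\sum_{k\in S_1\cup S_4}\alpha_k\Big)\ln(z+1)+C,
\]
with $C$ absorbing the $\ln\gamma_k$ terms and the $S_5$ constants. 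By the partition \eqref{eq:s} and \eqref{eq:ar}, $\sum_{k\in S_1\cup S_4}\alpha_k=\sum_{i\in S_1}|\alpha_i|-\sum_{i\in S_4}|\alpha_i|$, which vanishes under the balance hypothesis, so $g\equiv C$ on its domain. Choosing $\kappa_1,\kappa_2>0$ with $\ln(-\lambda_2\kappa_2/\lambda_1\kappa_1)=C$ then yields infinitely many solutions of \eqref{eq:lmg}, hence $cap_{pos}(G)=+\infty$.

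The step I expect to be the real work is confirming that the interval $I$ of \eqref{eq:interval}--\eqref{eq:am}, on which every $x_k=\gamma_k z+d_k$ is positive, has positive length, so that $g\equiv C$ genuinely holds on an interval. With $d_k=\gamma_k$ the forms for $k\in S_1\cup S_4$ are positive exactly for $z>-1$, pinning the lower endpoint at $a=-1$. The delicate point is $S_5$: an index with $\alpha_k=0$ still demands $x_k>0$, and if such an index has $\gamma_k<0$ it lowers the upper endpoint $M$. I would handle this purely by bookkeeping over $S_5$, choosing each such $d_k$ so as not to collapse the interval (for $\gamma_k<0$, take $d_k$ large and positive so $-d_k/\gamma_k>-1$; for $\gamma_k=0$, take $d_k>0$; for $\gamma_k>0$, take $d_k=\gamma_k$), which keeps $a<M$ while changing $g$ only by a constant. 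Thus the obstacle is not a single hard inequality but the combination of this $S_5$ case analysis with the standing requirement $\lambda_2<0$, both of which are routine once the regime $cap_{pos}(G)>0$ is fixed.
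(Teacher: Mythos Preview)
Your proposal is correct and follows essentially the same route as the paper: assume the balance $\sum_{i\in S_1}|\alpha_i|=\sum_{i\in S_4}|\alpha_i|$, choose the $d_k$ so that all ratios $d_k/\gamma_k$ for $k\in S_1\cup S_4$ coincide (the paper leaves this ratio generic, you set it equal to $1$), observe that $g$ is then constant on $I$, and invoke Lemma~\ref{lm:g} to get $cap_{pos}(G)=+\infty$. Your treatment is in fact more careful than the paper's on two points it glosses over: you explicitly handle indices in $S_5$ (in particular those with $\alpha_k=0$ but $\gamma_k<0$, which could shrink $M$) to ensure $I$ has positive length, and you flag that the passage from a constant value $C$ to a choice of $\kappa_1,\kappa_2$ realizing $K=C$ requires $\lambda_2<0$, which holds in the only regime where the lemma is applied.
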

\begin{proof}
Suppose $\sum\limits_{k\in S_1}|\alpha_k|=\sum\limits_{k\in S_4}|\alpha_k|$.
 It is straightforward to check that if we choose $d_1, \ldots, d_{i_4}$ such that
$\frac{d_1}{\gamma_1}=\dots=\frac{d_{i_4}}{\gamma_{i_4}}$, then $g(z)=\sum\limits_{k=1}^{i_4}|\alpha_k|\ln(d_k)$.
Hence, we can find a real number $K$ such that $g(z)=K$ for any $z$ in the interval $I$ defined in \eqref{eq:interval} (note here, $I=(-\frac{d_1}{\gamma_1}, +\infty)$).
So, by Lemma \ref{lm:g}, we have $cap_{pos}(G)=+\infty$,  which is a contradiction to the hypothesis that $cap_{pos}(G)<+\infty$.
\end{proof}

\begin{lemma}\label{lm:necessary}
Suppose $cap_{pos}(G)<+\infty$.
   Suppose that $S_2=S_3=\emptyset$, $S_1\neq\emptyset$, and $S_4\neq\emptyset$. If the network $G$ admits at least 3 positive steady states, then
   $\sum\limits_{i\in S_4}|\alpha_{i}|>\min\limits_{i\in S_1}\{|\alpha_i|\}$ and $\sum\limits_{i\in S_1}|\alpha_{i}|>\min\limits_{i\in S_4}\{|\alpha_i|\}$.
\end{lemma}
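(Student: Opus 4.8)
The plan is to argue by contradiction, reducing everything to the behavior of the function $g$ from \eqref{eq:g} and its critical points via Lemma \ref{lm:g}. Suppose $G$ admits at least $3$ positive steady states, yet one of the two asserted inequalities fails; by the symmetry between $S_1$ and $S_4$ (interchange their roles and use the upper rather than the lower bound in Lemma \ref{lm:g''}), it suffices to derive a contradiction from the assumption $\sum_{i\in S_4}|\alpha_i|\le \min_{i\in S_1}\{|\alpha_i|\}$. By Lemma \ref{lm:g}, there is a constant $K$ such that $g(z)=K$ has three distinct solutions $z_1<z_2<z_3$ in the interval $I$ of \eqref{eq:interval}, and since $S_2=S_3=\emptyset$ with $S_1,S_4\neq\emptyset$ the hypotheses of Lemma \ref{lm:g''} are in force on all of $I$.

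First I would show that $g''(z^*)>0$ at every critical point $z^*\in I$ of $g$. Under the standing assumption $\sum_{i\in S_4}|\alpha_i|\le \min_{i\in S_1}\{|\alpha_i|\}$, the left-hand bound in \eqref{eq:ineq} satisfies $-\tfrac{1}{\min_{k\in S_1}\{|\alpha_k|\}}+\tfrac{1}{\sum_{k\in S_4}|\alpha_k|}\ge 0$, and since $S_1\neq\emptyset$ and each $b_k(z^*)>0$ on $I$, the denominator $(\sum_{k\in S_1}|\alpha_k|b_k)^2$ is strictly positive, so $g''(z^*)\ge 0$. To upgrade this to a strict inequality I would split into cases: if $\sum_{i\in S_4}|\alpha_i|<\min_{i\in S_1}\{|\alpha_i|\}$, or if $S_1$ is not a singleton (so that the first inequality in Lemma \ref{lm:g''} is strict), then $g''(z^*)>0$ at once; the only remaining possibility is $\sum_{i\in S_4}|\alpha_i|=\min_{i\in S_1}\{|\alpha_i|\}$ with $S_1=\{k_0\}$ a singleton, but then $\min_{i\in S_1}\{|\alpha_i|\}=\sum_{i\in S_1}|\alpha_i|$ forces $\sum_{i\in S_1}|\alpha_i|=\sum_{i\in S_4}|\alpha_i|$, contradicting Lemma \ref{lm:14}. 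Hence $g''>0$ at every critical point.

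The final step is a one-variable calculus argument showing that $g''>0$ at all critical points forces $g$ to have at most one critical point in $I$. Indeed, if $w_1<w_2$ were two critical points, then $g''(w_1)>0$ makes $g'$ change from negative to positive at $w_1$ and $g''(w_2)>0$ makes $g'$ change from negative to positive at $w_2$, so $g'>0$ just to the right of $w_1$ while $g'<0$ just to the left of $w_2$; by the intermediate value theorem $g'$ vanishes at some $w_3\in(w_1,w_2)$ where it passes from positive to negative, giving $g''(w_3)\le 0$, a contradiction. With at most one critical point, $g$ is monotone on each side of it, so $g(z)=K$ has at most two solutions in $I$, contradicting the existence of $z_1<z_2<z_3$. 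This contradiction proves $\sum_{i\in S_4}|\alpha_i|>\min_{i\in S_1}\{|\alpha_i|\}$, and the symmetric argument, starting from $\sum_{i\in S_1}|\alpha_i|\le\min_{i\in S_4}\{|\alpha_i|\}$ and using the upper bound in \eqref{eq:ineq} to conclude $g''<0$ at every critical point, gives $\sum_{i\in S_1}|\alpha_i|>\min_{i\in S_4}\{|\alpha_i|\}$.

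I expect the main obstacle to be the careful bookkeeping of the equality case: the inequality \eqref{eq:ineq} only yields $g''\ge 0$ (resp.\ $g''\le 0$), and ruling out $g''=0$ at a critical point requires combining the singleton characterization furnished by Lemma \ref{lm:g''} with the strict separation $\sum_{i\in S_1}|\alpha_i|\neq\sum_{i\in S_4}|\alpha_i|$ from Lemma \ref{lm:14}. Overlooking this boundary case is the easiest way for the sign argument to break down, so I would state it explicitly.
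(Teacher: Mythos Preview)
Your proposal is correct and follows essentially the same approach as the paper: both proofs rely on Lemma~\ref{lm:g}, the bounds of Lemma~\ref{lm:g''}, and the strict separation from Lemma~\ref{lm:14} to rule out the equality case. The only difference is organizational: the paper argues directly, observing that three solutions of $g(z)=K$ force two critical points $z_1,z_2$ with $g''(z_1)g''(z_2)\le 0$ and then reads off both inequalities from \eqref{eq:ineq}, whereas you argue the contrapositive, showing that the failure of one inequality forces $g''>0$ at every critical point and hence at most two solutions; your version has the minor advantage of making explicit the one-variable calculus step that the paper leaves implicit.
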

\begin{proof}
Let $g(z)$ be the function defined in \eqref{eq:g}.
By Lemma \ref{lm:g},
there exists $K\in {\mathbb R}$  such that $g(x)=K$ has at least 3 different solutions in the interval $I$ defined in \eqref{eq:interval}. By Rolle's Theorem, $g'(z)=0$ has at least 2 different solutions in $I$, i.e., there exist $z_1,z_2\in I$ $(z_1\neq z_2)$ such that $g'(z_1)=g'(z_2)=0$ and $g''(z_1)g''(z_2)\leq 0$.
Without loss of generality,
we assume that $g''(z_1)\geq 0$ and $g''(z_2)\leq 0$.
Recall $b_k(z)$ defined in \eqref{eq:ab}.
Hence, by Lemma \ref{lm:g''}, we have
\begin{align}\label{eq:nec1}
-\frac{1}{\min\limits_{k\in S_1}\{|\alpha_k|\}}+\frac{1}{\sum\limits_{k\in S_4}|\alpha_k|}\leq \frac{g''(z_2)}{\left(\sum\limits_{k\in S_1}|\alpha_k|b_k(z_2)\right)^2} \leq 0, \;\text{and}\;
\end{align}
\begin{align}\label{eq:nec2}
 0\leq \frac{g''(z_1)}{\left(\sum\limits_{k\in S_1}|\alpha_k|b_k(z_1)\right)^2}\leq -\frac{1}{\sum\limits_{k\in S_1}|\alpha_k|}+\frac{1}{\min\limits_{k\in S_4}\{|\alpha_k|\}}.
\end{align}
So, we have  $\sum\limits_{k\in S_4}|\alpha_{k}|\geq \min\limits_{k\in S_1}\{|\alpha_k|\}$ and $\sum\limits_{k\in S_1}|\alpha_{k}|\geq \min\limits_{k\in S_4}\{|\alpha_k|\}$.

If $\sum\limits_{k\in S_4}|\alpha_{k}|=\min\limits_{k\in S_1}\{|\alpha_k|\}$, then the equalities in \eqref{eq:nec1} hold. By Lemma \ref{lm:g''}, we have
$S_1$ is singleton, and hence, we have $\sum\limits_{k\in S_4}|\alpha_{k}|=\min\limits_{k\in S_1}\{|\alpha_k|\}=\sum\limits_{k\in S_1}|\alpha_{k}|$. This is a contradiction to Lemma \ref{lm:14}. So, we have $\sum\limits_{k\in S_4}|\alpha_{k}|>\min\limits_{k\in S_1}\{|\alpha_k|\}$.  Similarly, we can show that
$\sum\limits_{k\in S_1}|\alpha_{k}|> \min\limits_{k\in S_4}\{|\alpha_k|\}$.
\end{proof}

\begin{lemma}\label{lm:sufficient}
  Suppose that $S_2=S_3=\emptyset$, $S_1\neq\emptyset$, and $S_4\neq\emptyset$.
   \begin{enumerate}
     \item[(1)]If $\sum\limits_{k\in S_1}|\alpha_k|<\sum\limits_{k\in S_4}|\alpha_k|$ and $\sum\limits_{k\in S_1}|\alpha_{k}|>\min\limits_{k\in S_4}\{|\alpha_k|\}$, then the network $G$ admits at least 3 different positive steady states.
     \item[(2)]If $\sum\limits_{k\in S_4}|\alpha_k|<\sum\limits_{k\in S_1}|\alpha_k|$ and $\sum\limits_{k\in S_4}|\alpha_{k}|>\min\limits_{k\in S_1}\{|\alpha_k|\}$, then the network $G$ admits at least 3 different positive steady states.
   \end{enumerate}
\end{lemma}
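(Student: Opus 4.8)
The plan is to invoke Lemma~\ref{lm:g} to turn the statement into a question about the number of solutions of $g(z)=K$ in the interval $I$, and then to manufacture a suitable $K$ by Lemma~\ref{lm:3roots}. Write $P:=\sum_{k\in S_1}|\alpha_k|$ and $Q:=\sum_{k\in S_4}|\alpha_k|$. Since $S_2=S_3=\emptyset$, every $k\in S_1\cup S_4$ has $\gamma_k>0$, while each remaining species lies in $S_5$ and contributes nothing to $g$ (it has $\alpha_k=0$ or $\gamma_k=0$). Hence from \eqref{eq:g} one reads off the genuine asymptotics $g(z)=(P-Q)\ln z+O(1)$ as $z\to+\infty$; the $S_5$ species only restrict $I$, and by taking their parameters large we may push the right endpoint $M$ arbitrarily far out (in particular $M=+\infty$ when no species has $\gamma_k<0$).

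I focus on case~(1), where $P<Q$ and $P>\min_{k\in S_4}|\alpha_k|$; case~(2) is the mirror image with the roles of $S_1$ and $S_4$ interchanged. Because $P<Q$, the asymptotics give that $g$ eventually descends below any fixed level, so by Lemma~\ref{lm:3roots} (with $\lim_{z\to M}g=-\infty$) it suffices to exhibit a critical point $z^*\in I$ with $g''(z^*)>0$. Lemma~\ref{lm:g''} already shows that at any critical point $g''(z^*)/\bigl(\sum_{k\in S_1}|\alpha_k|b_k\bigr)^2\le -\tfrac{1}{P}+\tfrac{1}{\min_{k\in S_4}|\alpha_k|}$, and this right-hand side is \emph{strictly positive} precisely because the hypothesis gives $P>\min_{k\in S_4}|\alpha_k|$. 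Thus the favorable sign is not excluded; the work is to realize it.

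To realize it I would drive $g''$ toward this upper bound by choosing the $d_k$. Recall from \eqref{eq:ab} that $b_k(z)=\gamma_k/(\gamma_k z+d_k)$; setting $d_k/\gamma_k$ equal to a common value $\delta_1$ for all $k\in S_1$ makes $b_k(z)\equiv 1/(z+\delta_1)$ on $S_1$, turning the Cauchy inequality \eqref{eq:cauchy} for $S_1$ into an equality. Let $k_0\in S_4$ attain $\min_{k\in S_4}|\alpha_k|=:q$; set $d_{k_0}/\gamma_{k_0}=\delta_4$ with $\delta_4<\delta_1$, and set $d_k/\gamma_k=\delta_4'$ with $\delta_4'$ large for the remaining indices of $S_4$. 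As $\delta_4'\to+\infty$ the contributions of $S_4\setminus\{k_0\}$ to $g'$ and $g''$ tend to zero uniformly on compact subsets of $I$, so these derivatives converge to those of $g_0(z)=P\ln(z+\delta_1)-q\ln(z+\delta_4)+\mathrm{const}$. A direct computation shows that $g_0$ has a unique critical point $z_0^*=(q\delta_1-P\delta_4)/(P-q)$, that $z_0^*\in I$ exactly when $\delta_1>\delta_4$, and that $g_0''(z_0^*)=r^2\bigl(P-q\bigr)/\bigl(Pq\bigr)>0$, where $r$ is the common value of $\sum_{S_1}|\alpha_k|b_k$ and $\sum_{S_4}|\alpha_k|b_k$ there. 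By continuity, for $\delta_4'$ large the actual $g$ retains a critical point $z^*$ near $z_0^*$ with $g''(z^*)>0$, while the leading coefficient $P-Q<0$ is unchanged so that the required endpoint divergence persists. Lemma~\ref{lm:3roots} then supplies the desired $K$, and Lemma~\ref{lm:g} finishes case~(1). For case~(2) one instead equalizes $S_4$, concentrates $S_1$ on its minimal coefficient, uses $P>Q$ to get $g\to+\infty$, and forces $g''(z^*)<0$ via the lower bound of Lemma~\ref{lm:g''}, which is negative exactly when $Q>\min_{k\in S_1}|\alpha_k|$.

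The main obstacle is precisely this realization step: Lemma~\ref{lm:g''} only bounds $g''$ at critical points and does not by itself furnish a configuration attaining a usable sign. The crux is therefore the explicit ``equalize one block, concentrate the other on its minimal coefficient'' construction, together with the limiting argument that upgrades the strict numerical inequality $P>\min_{k\in S_4}|\alpha_k|$ into an honest positive value of $g''$ at a genuine critical point inside $I$. The remaining care is bookkeeping: guaranteeing $z^*\in(a,M)$ (the condition $\delta_1>\delta_4$) and parametrizing the $S_5$ species with $\gamma_k<0$ so that $M$ is pushed far enough out that the decreasing (respectively increasing) branch of $g$ past the critical point falls below (respectively above) the target level $K$, which is all that Lemma~\ref{lm:3roots} actually needs.
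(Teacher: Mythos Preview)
Your proposal is correct and follows essentially the same route as the paper: reduce via Lemma~\ref{lm:g}, establish the endpoint divergence from the asymptotic $(P-Q)\ln z$, then construct parameters (equalize the $d_k/\gamma_k$ on $S_1$, isolate the minimal index in $S_4$, send the remaining $S_4$ contributions to zero) to force a critical point with $g''>0$, and conclude by Lemma~\ref{lm:3roots}. The paper parametrizes so that the critical point sits at $z^*=0$, whereas you compute its location explicitly in the limiting two-term function $g_0$ and pass back by continuity; this is the same construction in different coordinates. Your treatment of the $S_5$ species with $\gamma_k<0$ (noting that they only restrict $I$ and can be pushed out so that $g$ still crosses the chosen level $K$ beyond the critical point) is in fact more careful than the paper, which simply asserts $M=+\infty$.
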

\begin{proof}
We only need to prove (1), and the statement (2) holds by symmetry. Let $g(z)$ be the function defined in \eqref{eq:g}, and let $I=(a, M)$ be the interval defined in \eqref{eq:interval}.
 By Lemma \ref{lm:g}, we only need to prove that we can choose
 $d_1, \ldots, d_s$ such that for some $K\in {\mathbb R}$,
 $g(z)=K$ has at least three different solutions in $I$.
 If  $S_2=S_3=\emptyset$, $S_1\neq\emptyset$, and $S_4\neq\emptyset$, then by \eqref{eq:s} and \eqref{eq:am}, we have $a=\max\limits_{k\in S_1\cup S_4}\{-\frac{d_{k}}{\gamma_k}\}$ and $M=+\infty$.
 By \eqref{eq:g}, we have
\begin{align*}
  g(z)&=\sum\limits_{k\in S_1}|\alpha_k|\ln(\gamma_k z+d_k)-\sum\limits_{k\in S_4}|\alpha_k|\ln(\gamma_k z+d_k)\\
&\sim \left(\sum\limits_{k\in S_1}|\alpha_k|-\sum\limits_{k\in S_4}|\alpha_k|\right)\ln(z) \;\;\; (z\rightarrow +\infty).
\end{align*}
So, if $\sum\limits_{k\in S_1}|\alpha_k|<\sum\limits_{k\in S_4}|\alpha_k|$, then we have $\lim\limits_{z\rightarrow M}g(z)=\lim\limits_{z\rightarrow +\infty}g(z)=-\infty$.

Below, we will prove that there exist $d_1,\ldots,d_{s}$ such that $0\in (a, M)$,  $g'(0)=0$ and $g''(0)>0$.
For any  $i\in S_1$, let
$$d_i = \frac{|\gamma_i|}{X}, \;\;\; \text{where}\; X \; \text{is a real number.}$$
For any $i\in S_4\setminus \{\ell\}$, where $\ell$ can be any index chosen from $\argmin\limits_{k\in S_4}\{|\alpha_k|\}$, let
$$d_i = \frac{|\gamma_i|}{\epsilon}, \;\;\; \text{where}\; \epsilon \; \text{is a real number.}$$
Finally, for the index $\ell$ we choose in the previous step, let
$$d_{\ell}=\frac{|\gamma_{\ell}|}{Y},\;\;\; \text{where}\; Y \; \text{is a real number.}$$
Note here, for any $(\epsilon, X, Y)\in {\mathbb R}_{>0}^3$,
we have $0\in (a, M)=(\max\limits_{k\in S_1\cup S_4}\{-\frac{d_{k}}{\gamma_k}\}, +\infty)$.
Note also, $$\begin{aligned}
  &g'(0)=\sum\limits_{k\in S_1\cup S_4}\frac{\alpha_k\gamma_k}{\gamma_kz+d_k}|_{z=0}=\sum\limits_{k\in S_1}|\alpha_k|X-|\alpha_{\ell}|Y-\sum\limits_{k\in S_4\setminus\{l\}}|\alpha_k|\epsilon,\\
  &g''(0)=\sum\limits_{k\in S_1\cup S_4}\frac{\alpha_k\gamma^2_k}{(\gamma_kz+d_k)^2}|_{z=0}=-\sum\limits_{k\in S_1}|\alpha_k|X^2+|\alpha_{\ell}|Y^2+\sum\limits_{k\in S_4\setminus\{l\}}|\alpha_k|\epsilon^2.
\end{aligned}$$
If $\sum\limits_{k\in S_1}|\alpha_{k}|>\min\limits_{k\in S_4}\{|\alpha_k|\}$, i.e., $\sum\limits_{k\in S_1}|\alpha_{k}|>|\alpha_{\ell}|$, then for $\epsilon=0$,
for any positive number $X$, and for $Y=X\sum\limits_{k\in S_1}|\alpha_{k}|/|\alpha_{\ell}|$,
we have $g'(0)=0$ and $g''(0)>0$. So, when $\epsilon$ is a sufficiently small positive number,
there exists $(X, Y)\in {\mathbb R}_{>0}^2$ such that  $g'(0)=0$ and $g''(0)>0$.
So, we know by Lemma \ref{lm:3roots}  that we can choose  $d_1,\dots,d_{s}$
   such that for some real number $K$, $g(z)=K$ has at least 3 different solutions in the interval $I=(a, M)$.
\end{proof}

\begin{lemma}\label{lm:case1}
Suppose $cap_{pos}(G)<+\infty$.
  If there are exactly two of the four sets $S_1,S_2,S_3,S_4$ are non-empty, then the network $G$ admits at least 3 different positive steady states if and only if

  (1) both the sequence $\{\alpha_k\}_{k=1}^{i_4-1}$ and $\{\alpha_k\gamma_k\}_{k=1}^{i_4-1}$ change signs,

   (2) $\sum\limits_{i\in S_k}|\alpha_{i}|>\min\limits_{i\in S_{\ell}}\{|\alpha_i|\}$ and $\sum\limits_{i\in S_{\ell}}|\alpha_{i}|>\min\limits_{i\in S_k}\{|\alpha_i|\}$, where $S_k$ and $S_{\ell}$ are the two non-empty sets.
\end{lemma}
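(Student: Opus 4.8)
The plan is to prove both directions by reducing to the lemmas already established for the configuration $S_2=S_3=\emptyset$, namely Lemma \ref{lm:necessary} (necessity of condition (2)) and Lemma \ref{lm:sufficient} (sufficiency), together with the sign criterion of Lemma \ref{lm:sign} and the finiteness input of Lemma \ref{lm:14}. The first step is a purely combinatorial observation about signs: since exactly two of $S_1,S_2,S_3,S_4$ are nonempty, condition (1) holds if and only if the two nonempty sets are $\{S_1,S_4\}$ or $\{S_2,S_3\}$. Indeed, $\{\alpha_k\}_{k=1}^{i_4-1}$ changes sign iff one nonempty set lies in $\{S_1,S_3\}$ and the other in $\{S_2,S_4\}$, while $\{\alpha_k\gamma_k\}_{k=1}^{i_4-1}$ changes sign iff one lies in $\{S_1,S_2\}$ and the other in $\{S_3,S_4\}$; the only pairs meeting both requirements are $\{S_1,S_4\}$ and $\{S_2,S_3\}$.

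For the forward direction, I would suppose $G$ admits at least three positive steady states. Then it admits at least two, so Lemma \ref{lm:sign}(a) forces $\{\alpha_k\gamma_k\}_{k=1}^{i_4-1}$ to change sign and Lemma \ref{lm:sign}(b) forces $\{\alpha_k\}_{k=1}^{i_4-1}$ to change sign; this gives (1), and by the combinatorial observation the two nonempty sets are $\{S_1,S_4\}$ or $\{S_2,S_3\}$. In the case $\{S_1,S_4\}$ (that is, $S_2=S_3=\emptyset$), condition (2) is exactly the conclusion of Lemma \ref{lm:necessary}, whose hypothesis $cap_{pos}(G)<+\infty$ is available. For the backward direction I would assume (1) and (2); again (1) forces the pair to be $\{S_1,S_4\}$ or $\{S_2,S_3\}$. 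In the case $\{S_1,S_4\}$, Lemma \ref{lm:14} (using $cap_{pos}(G)<+\infty$) gives $\sum_{k\in S_1}|\alpha_k|\neq\sum_{k\in S_4}|\alpha_k|$; assuming without loss of generality $\sum_{k\in S_1}|\alpha_k|<\sum_{k\in S_4}|\alpha_k|$, condition (2) supplies $\sum_{k\in S_1}|\alpha_k|>\min_{k\in S_4}\{|\alpha_k|\}$, so Lemma \ref{lm:sufficient}(1) yields at least three positive steady states (the reverse inequality between the sums is handled symmetrically by Lemma \ref{lm:sufficient}(2)).

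It remains to dispose of the case $\{S_2,S_3\}$ in both directions, and this is where the only real work lies. The idea is the substitution $z\mapsto -z$ applied to the function $g(z)=\sum_{k=1}^{s}\alpha_k\ln(\gamma_kz+d_k)$ of Lemma \ref{lm:g}: writing $\hat g(w):=g(-w)=\sum_{k=1}^{s}\alpha_k\ln((-\gamma_k)w+d_k)$, the effective sign of each $\gamma_k$ is reversed, so an index in $S_2$ (where $\alpha_k<0,\gamma_k<0$) moves to the class $\alpha_k<0,-\gamma_k>0$, i.e. $S_4$, and an index in $S_3$ moves to $S_1$; moreover $g(z)=K$ has $N$ solutions in $I=(a,M)$ iff $\hat g(w)=K$ has $N$ solutions in $(-M,-a)$, and the barrier conditions $\gamma_kz+d_k>0$ are preserved. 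Since $|\alpha_k|$ is unchanged, condition (2) for $\{S_2,S_3\}$ (namely $\sum_{k\in S_2}|\alpha_k|>\min_{k\in S_3}\{|\alpha_k|\}$ and $\sum_{k\in S_3}|\alpha_k|>\min_{k\in S_2}\{|\alpha_k|\}$) is carried exactly onto condition (2) for the $\{S_1,S_4\}$-configuration of $\hat g$. Because the proofs of Lemmas \ref{lm:14}, \ref{lm:necessary} and \ref{lm:sufficient} use only the analytic features of $g$ (Lemma \ref{lm:g}, Rolle's theorem, and the estimates of Lemma \ref{lm:g''} and Lemma \ref{lm:3roots}), all invariant under $z\mapsto-z$, the conclusions transfer and settle the $\{S_2,S_3\}$ case.

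The main obstacle I anticipate is making the $\{S_2,S_3\}$ reduction airtight: one must verify that the $z\mapsto-z$ symmetry genuinely interchanges the roles of $(S_2,S_3)$ and $(S_4,S_1)$ at the level of $g'$, $g''$, and the admissible interval, and that condition (2) is mapped with the correct orientation. An alternative to the substitution is to re-prove the analogues of Lemmas \ref{lm:g''}, \ref{lm:necessary} and \ref{lm:sufficient} directly for $S_2,S_3$; this is routine but tedious, since for $\{S_2,S_3\}$ one has $g''=\sum_{k\in S_2}|\alpha_k|b_k^2-\sum_{k\in S_3}|\alpha_k|b_k^2$, sharing the sign pattern of $g'$, so the Cauchy-type estimate of Lemma \ref{lm:g''} would need to be reworked with the indices permuted accordingly.
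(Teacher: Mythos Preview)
Your proposal is correct and follows essentially the same route as the paper: Lemma \ref{lm:sign} for the necessity of (1), the combinatorial reduction to the pairs $\{S_1,S_4\}$ or $\{S_2,S_3\}$, then Lemma \ref{lm:necessary} for necessity of (2) and Lemmas \ref{lm:14} and \ref{lm:sufficient} for sufficiency. The paper dispatches the $\{S_2,S_3\}$ case with a bare ``without loss of generality, assume $S_2=S_3=\emptyset$''; your explicit $z\mapsto -z$ reduction is exactly the symmetry underlying that phrase, so your write-up is in fact more careful than the paper's own proof at this point.
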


\begin{proof}

``$\Rightarrow$:" The statement (1) directly follows from Lemma \ref{lm:sign}.
If  there are exactly two of the four sets $S_1,S_2,S_3,S_4$ are non-empty, by the conclusion (1),
there are only two possibilities: $S_1$ and $S_4$ are non-empty,  or $S_2$ and $S_3$ are non-empty.  Without loss of generality, we assume $S_2=S_3=\emptyset$, $S_1\neq\emptyset$  and $S_4\neq\emptyset$.
 Thus, the statement (2) follows from Lemma \ref{lm:necessary}.

``$\Leftarrow$:"
Again, by the statement (1), we can assume that $S_1$ and $S_4$ are non-empty.
By Lemma \ref{lm:14}, we have $\sum\limits_{k\in S_1}|\alpha_k|\neq \sum\limits_{k\in S_4}|\alpha_k|$.
If $$\sum\limits_{k\in S_1}|\alpha_k|<\sum\limits_{k\in S_4}|\alpha_k|,\; \text{or}\; \sum\limits_{k\in S_1}|\alpha_k|>\sum\limits_{k\in S_4}|\alpha_k|,$$ then the conclusion follows from Lemma \ref{lm:sufficient}.
\end{proof}

\begin{lemma}\label{lm:g''2}
Suppose that  $S_3=\emptyset$, $S_1\neq\emptyset,S_2\neq\emptyset$, and $S_4\neq\emptyset$.
 Let $g(z)$ be the function defined in \eqref{eq:g}, and let $I$ be the interval defined in \eqref{eq:interval}.
 If there exists $z^*\in I$ such that $g'(z^*)=0$, then for $z=z^*$, we have the following inequality:
 \begin{align}\label{eq:g''2}
   g''(z)>\left(\sum\limits_{k\in S_1}|\alpha_k|b_k\right)^2\left(\frac{1}{\sum\limits_{k\in S_4}|\alpha_k|}-\frac{1}{\min\limits_{k\in S_1}\{|\alpha_k|\}}\right).
 \end{align}
\end{lemma}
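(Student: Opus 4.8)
The plan is to mirror the proof of Lemma~\ref{lm:g''}, the difference being that the extra index set $S_2$ now contributes a strictly positive term to $g''$, and that the stationarity condition $g'(z^*)=0$ will be used to turn the bound into a strict inequality. First I would specialize \eqref{eq:gd1} and \eqref{eq:gd2} to the hypothesis $S_3=\emptyset$, obtaining for every $z\in I$
\[
g'(z)=\sum_{k\in S_1}|\alpha_k|b_k+\sum_{k\in S_2}|\alpha_k|b_k-\sum_{k\in S_4}|\alpha_k|b_k,\qquad
g''(z)=-\sum_{k\in S_1}|\alpha_k|b_k^2+\sum_{k\in S_2}|\alpha_k|b_k^2+\sum_{k\in S_4}|\alpha_k|b_k^2.
\]

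Next I would invoke the two Cauchy-type estimates already established in \eqref{eq:cauchy} and \eqref{eq:cauchynext}; since their proof only uses $b_k(z)>0$ for $z\in I$, they hold verbatim for any nonempty index set, giving
\[
\frac{\left(\sum_{k\in S_i}|\alpha_k|b_k\right)^2}{\sum_{k\in S_i}|\alpha_k|}\ \le\ \sum_{k\in S_i}|\alpha_k|b_k^2\ \le\ \frac{\left(\sum_{k\in S_i}|\alpha_k|b_k\right)^2}{\min_{k\in S_i}\{|\alpha_k|\}}.
\]
Applying the upper bound to the $S_1$-sum (which enters $g''$ with a minus sign), the lower bound to the $S_4$-sum, and leaving the $S_2$-sum untouched, I get for every $z\in I$
\[
g''(z)\ \ge\ -\frac{\left(\sum_{k\in S_1}|\alpha_k|b_k\right)^2}{\min_{k\in S_1}\{|\alpha_k|\}}+\sum_{k\in S_2}|\alpha_k|b_k^2+\frac{\left(\sum_{k\in S_4}|\alpha_k|b_k\right)^2}{\sum_{k\in S_4}|\alpha_k|}.
\]

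The key step is to evaluate at $z=z^*$ and use $g'(z^*)=0$, which (as $S_3=\emptyset$) reads $\sum_{k\in S_4}|\alpha_k|b_k=\sum_{k\in S_1}|\alpha_k|b_k+\sum_{k\in S_2}|\alpha_k|b_k$ at $z^*$. Because $S_1\neq\emptyset$, $S_2\neq\emptyset$, and every $b_k(z^*)>0$, both sums on the right are strictly positive, so $\sum_{k\in S_4}|\alpha_k|b_k>\sum_{k\in S_1}|\alpha_k|b_k>0$ at $z^*$, whence $\left(\sum_{k\in S_4}|\alpha_k|b_k\right)^2\ge\left(\sum_{k\in S_1}|\alpha_k|b_k\right)^2$; moreover $\sum_{k\in S_2}|\alpha_k|b_k^2>0$. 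Substituting these two facts into the previous display, the strictly positive $S_2$-term forces a strict inequality and I conclude, at $z=z^*$,
\[
g''(z^*)\ >\ -\frac{\left(\sum_{k\in S_1}|\alpha_k|b_k\right)^2}{\min_{k\in S_1}\{|\alpha_k|\}}+\frac{\left(\sum_{k\in S_1}|\alpha_k|b_k\right)^2}{\sum_{k\in S_4}|\alpha_k|}=\left(\sum_{k\in S_1}|\alpha_k|b_k\right)^2\left(\frac{1}{\sum_{k\in S_4}|\alpha_k|}-\frac{1}{\min_{k\in S_1}\{|\alpha_k|\}}\right),
\]
which is exactly \eqref{eq:g''2}. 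The argument is essentially a bookkeeping refinement of Lemma~\ref{lm:g''}, so I do not anticipate a genuine obstacle; the one point requiring care is the placement of the strict sign. I would emphasize that strictness must \emph{not} be claimed in the Cauchy estimates themselves, since those can degenerate to equalities when $S_1$ or $S_4$ is a singleton. The strict inequality comes solely from the contribution of $S_2$: the discarded term $\sum_{k\in S_2}|\alpha_k|b_k^2$ (equivalently, the gap between the $S_4$- and $S_1$-sums produced by the stationarity relation) is strictly positive.
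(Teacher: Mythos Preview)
Your proof is correct and follows essentially the same route as the paper's: specialize \eqref{eq:gd1}--\eqref{eq:gd2} to $S_3=\emptyset$, use the stationarity relation $g'(z^*)=0$ to compare $\sum_{S_4}|\alpha_k|b_k$ with $\sum_{S_1}|\alpha_k|b_k$, apply the Cauchy-type estimates \eqref{eq:cauchy}--\eqref{eq:cauchynext}, and obtain strictness from the $S_2$-contribution. The only cosmetic difference is the order in which you apply the estimates (you bound via Cauchy first and drop the $S_2$-term last, whereas the paper drops it first), and your remark that strictness should not be attributed to the Cauchy bounds themselves is a nice clarification.
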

\begin{proof}
 Since $ g'(z)=\sum\limits_{k\in S_1}|\alpha_k|b_k+\sum\limits_{k\in S_2}|\alpha_k|b_k-\sum\limits_{k\in S_4}|\alpha_k|b_k$, for $z=z^*$, $g(z^*)=0$ implies that
   \begin{align}\label{eq:g2I4}
 \sum\limits_{k\in S_4}|\alpha_k|b_k(z^*)= \sum\limits_{k\in S_1}|\alpha_k|b_k(z^*)+\sum\limits_{k\in S_2}|\alpha_k|b_k(z^*)>\sum\limits_{k\in S_1}|\alpha_k|b_k(z^*).
\end{align}
If $S_3=\emptyset$, then by \eqref{eq:cauchy}, \eqref{eq:cauchynext} and \eqref{eq:g2I4}, for $z=z^*$, we have
$$\begin{aligned}
     g''(z)&=-\sum\limits_{k\in S_1}|\alpha_k|b_k^2+\sum\limits_{k\in S_2}|\alpha_k|b_k^2+\sum\limits_{k\in S_4}|\alpha_k|b_k^2\\
     &>-\sum\limits_{k\in S_1}|\alpha_k|b_k^2+\sum\limits_{k\in S_4}|\alpha_k|b_k^2 \\
     &\geq -\frac{\left(\sum\limits_{k\in S_1}|\alpha_k|b_k\right)^2}{\min\limits_{k\in S_1}\{|\alpha_k|\}}
     +\frac{\left(\sum\limits_{k\in S_4}|\alpha_k|b_k\right)^2}{\sum\limits_{k\in S_4}|\alpha_k|} \\
     &> -\frac{\left(\sum\limits_{k\in S_1}|\alpha_k|b_k\right)^2}{\min\limits_{k\in S_1}\{|\alpha_k|\}}
     +\frac{\left(\sum\limits_{k\in S_1}|\alpha_k|b_k\right)^2}{\sum\limits_{k\in S_4}|\alpha_k|}.
   \end{aligned}$$
   So, we have the equality \eqref{eq:g''2}.
\end{proof}

\begin{lemma}\label{lm:necessary2}
  Suppose that $S_3=\emptyset$, $S_1\neq\emptyset,S_2\neq\emptyset$, and $S_4\neq\emptyset$. If the network $G$ admits at least 3 different positive steady states, then $\sum\limits_{i\in S_4}|\alpha_{i}|>\min\limits_{i\in S_1}\{|\alpha_i|\}$.
\end{lemma}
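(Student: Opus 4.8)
The plan is to argue by contradiction, assuming
$\sum_{i\in S_4}|\alpha_i|\le\min_{i\in S_1}\{|\alpha_i|\}$, and to contradict the existence of three positive steady states. First I would translate the hypothesis through Lemma \ref{lm:g}: admitting at least three positive steady states means that for some $\kappa_1,\kappa_2>0$ and some real numbers $d_1,\dots,d_s$, the equation $g(z)=K$ has at least three distinct solutions $s_1<s_2<s_3$ in the interval $I$, where $g$ is the function in \eqref{eq:g}. Throughout the argument I would keep in mind that for $z\in I$ every factor $\gamma_k z+d_k$ is positive, so each $b_k(z)>0$.

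Next I would extract a critical point at which the second derivative is non-positive, exactly as in the proof of Lemma \ref{lm:necessary}. Applying Rolle's Theorem on $[s_1,s_2]$ and on $[s_2,s_3]$ produces two distinct zeros $z_1<z_2$ of $g'$ in $I$. Since $g$ returns to the common value $K$ at $s_1,s_2,s_3$, the extrema detected in the two consecutive gaps cannot both be strict local minima; more precisely, one checks $g''(z_1)g''(z_2)\le 0$, so at least one of the two critical points, say $z^*$, satisfies $g''(z^*)\le 0$.

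On the other hand, I would invoke Lemma \ref{lm:g''2}. Under the standing hypotheses $S_3=\emptyset$ and $S_1,S_2,S_4\ne\emptyset$, the critical point $z^*$ obeys
$$g''(z^*)>\left(\sum_{k\in S_1}|\alpha_k|\,b_k(z^*)\right)^2\left(\frac{1}{\sum_{k\in S_4}|\alpha_k|}-\frac{1}{\min_{k\in S_1}\{|\alpha_k|\}}\right).$$
Because $S_1\ne\emptyset$ and $b_k(z^*)>0$, the squared prefactor is strictly positive; and the contradiction hypothesis $\sum_{i\in S_4}|\alpha_i|\le\min_{i\in S_1}\{|\alpha_i|\}$ makes the difference in the second factor non-negative. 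Hence $g''(z^*)>0$, contradicting $g''(z^*)\le 0$. This forces $\sum_{i\in S_4}|\alpha_i|>\min_{i\in S_1}\{|\alpha_i|\}$, as claimed.

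I expect the only delicate step to be the sign bookkeeping guaranteeing a critical point with $g''\le 0$: one must be certain that three level crossings of $g$ genuinely produce a local maximum rather than merely a pair of local minima, which is precisely the content of the observation $g''(z_1)g''(z_2)\le 0$ borrowed from the proof of Lemma \ref{lm:necessary}. Once that is in hand, the remainder is a direct application of Lemma \ref{lm:g''2}, taking care to use the strictness of its inequality so that the non-negative lower bound still yields $g''(z^*)>0$.
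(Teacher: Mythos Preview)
Your proposal is correct and follows essentially the same approach as the paper: both arguments extract from three level crossings of $g$ a critical point $z^*$ with $g''(z^*)\le 0$ (via Rolle's theorem and the observation $g''(z_1)g''(z_2)\le 0$), then apply the strict lower bound of Lemma \ref{lm:g''2} to force $\frac{1}{\sum_{k\in S_4}|\alpha_k|}-\frac{1}{\min_{k\in S_1}\{|\alpha_k|\}}<0$. The only cosmetic difference is that you frame it as a contradiction while the paper argues directly.
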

\begin{proof}
Let $g(z)$ be the function defined in \eqref{eq:g}. Let $I$ be the interval defined in \eqref{eq:interval}.
  Similar to the proof of Lemma \ref{lm:necessary}, if $G$ admits at least 3 different positive steady states, then there exist $d_1, \ldots, d_s$ such that for two different points $z_1, z_2\in I$, we have  $g'(z_1)=g'(z_2)=0$ and $g''(z_1)g''(z_2)\leq 0$. Without loss of generality, we assume that
  $g''(z_1)\leq 0$.
  By Lemma \ref{lm:g''2}, we have
\begin{align}
  \left(\sum\limits_{k\in S_1}|\alpha_k|b_k(z_1)\right)^2\left(\frac{1}{\sum\limits_{k\in S_4}|\alpha_k|}-\frac{1}{\min\limits_{k\in S_1}\{|\alpha_k|\}}\right) <g''(z_1)\leq 0.
 \end{align}
Therefore, we have $\min\limits_{i\in S_1}\{|\alpha_i|\}<\sum\limits_{i\in S_4}|\alpha_{i}|$.
\end{proof}

\begin{lemma}\label{lm:sufficient2}
  Suppose that $S_3=\emptyset$, $S_1\neq\emptyset,S_2\neq\emptyset$, and $S_4\neq\emptyset$. If $\sum\limits_{i\in S_4}|\alpha_{i}|>\min\limits_{i\in S_1}\{|\alpha_i|\}$, then the network $G$ admits at least 3 different positive steady states.
\end{lemma}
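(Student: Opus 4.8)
The plan is to apply Lemma \ref{lm:g} and thereby reduce the claim to constructing rate constants together with offsets $d_1,\dots,d_s$ for which the function $g(z)$ of \eqref{eq:g} attains some common value $K$ at three points of the interval $I=(a,M)$ from \eqref{eq:interval}. I would produce these three solutions through Lemma \ref{lm:3roots}, so the entire task becomes: choose the $d_k$ so that $\lim_{z\to M}g(z)=+\infty$ and so that $g$ has a critical point $z^*\in I$ with $g''(z^*)<0$. This is the exact analogue of the construction in Lemma \ref{lm:sufficient}, except that here the finite blow-up at $M$ is supplied by $S_2$ rather than by the growth of $g$ at $z\to+\infty$.

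First I would record the boundary behaviour. Once the offsets are chosen as below, the right endpoint $M$ of $I$ in \eqref{eq:am} is attained at a binding index $k^*\in S_2$, and since every $k\in S_2$ has $\gamma_k<0$ and $\alpha_k<0$ by \eqref{eq:s}, the corresponding term $\alpha_{k^*}\ln(\gamma_{k^*}z+d_{k^*})$ tends to $+\infty$ as $z\to M^-$. Hence $\lim_{z\to M}g(z)=+\infty$, and by Lemma \ref{lm:3roots} a critical point $z^*$ with $g''(z^*)<0$ yields at least three solutions of $g(z)=K$ in $I$, which by Lemma \ref{lm:g} is what we need.

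Next I would arrange that $z^*=0$ works. Writing $d_k=|\gamma_k|/t_k$ with $t_k>0$ makes $b_k(0)=t_k$ in the notation of \eqref{eq:ab}, keeps every $d_k>0$ (so that $0\in I$), and by \eqref{eq:gd1}--\eqref{eq:gd2} gives $g'(0)=\sum_{S_1}|\alpha_k|t_k+\sum_{S_2}|\alpha_k|t_k-\sum_{S_4}|\alpha_k|t_k$ and $g''(0)=-\sum_{S_1}|\alpha_k|t_k^2+\sum_{S_2}|\alpha_k|t_k^2+\sum_{S_4}|\alpha_k|t_k^2$. The key idea is to switch off $S_2$ and all of $S_1$ except a minimizer: fix $\ell\in\argmin_{k\in S_1}\{|\alpha_k|\}$, let $t_k\to 0$ for $k\in S_2\cup(S_1\setminus\{\ell\})$ (these offsets merely push their constraints outward and contribute negligibly to $g',g''$), set $t_k=X$ for $k\in S_4$, and then solve $g'(0)=0$ by $t_\ell=\tfrac{X}{|\alpha_\ell|}\sum_{k\in S_4}|\alpha_k|$. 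A direct substitution then gives
\[
g''(0)=X^2\Big(\sum_{k\in S_4}|\alpha_k|\Big)\Big(1-\tfrac{1}{|\alpha_\ell|}\sum_{k\in S_4}|\alpha_k|\Big),
\]
and the second factor is strictly negative precisely because the hypothesis $\sum_{k\in S_4}|\alpha_k|>\min_{k\in S_1}\{|\alpha_k|\}=|\alpha_\ell|$ holds. The choice of the \emph{minimal} index in $S_1$ is exactly what maximizes the negative squared contribution $-|\alpha_\ell|t_\ell^2$ for a prescribed linear contribution $|\alpha_\ell|t_\ell$.

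Finally I would upgrade this limiting computation to a genuine choice of parameters by continuity. The three requirements $0\in I$, $g'(0)=0$, and $g''(0)<0$ are open: the last is a strict inequality, and $g'(0)=0$ can be restored by adjusting $t_\ell$ (or $X$) as the switched-off $t_k$ are taken small but positive. Hence for sufficiently small positive values of the offsets indexed by $S_2\cup(S_1\setminus\{\ell\})$ there remain honest $d_1,\dots,d_s$ with $0$ a critical point of $g$ and $g''(0)<0$, and Lemma \ref{lm:3roots} supplies the three solutions. I expect the continuity step to be the main obstacle: one must verify that perturbing the $S_2$ and $S_1\setminus\{\ell\}$ offsets away from their limiting value $t_k=0$ (that is, $d_k=+\infty$) keeps $0$ strictly inside $I$ while preserving both criticality and strict concavity at $0$, and it is the strictness of the hypothesis inequality that makes this robust.
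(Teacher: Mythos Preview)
Your proposal is correct and follows essentially the same approach as the paper: choose $d_k=|\gamma_k|/t_k$, single out $\ell\in\argmin_{k\in S_1}|\alpha_k|$, send the parameters indexed by $(S_1\cup S_2)\setminus\{\ell\}$ to zero, tie the $S_4$ parameters together, solve the linear relation $g'(0)=0$, verify $g''(0)<0$ from the hypothesis, and then invoke continuity and Lemma~\ref{lm:3roots}. The only cosmetic difference is that the paper solves $g'(0)=0$ for the common $S_4$ parameter in terms of the $\ell$ parameter, whereas you solve for $t_\ell$ in terms of the common $S_4$ parameter $X$; the resulting computation of $g''(0)$ is the same.
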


\begin{proof}
Similar to the proof of Lemma \ref{lm:necessary}, we only need to prove that we can choose
 $d_1, \ldots, d_s$ such that for some $K\in {\mathbb R}$,
 $g(z)=K$ has at least three different solutions in $I$.
 If  $S_3=\emptyset$, $S_1\neq\emptyset$,   $S_2\neq\emptyset$, and $S_4\neq\emptyset$, then by \eqref{eq:s} and \eqref{eq:am}, we have $a=\max\limits_{k\in S_1\cup S_4}\{-\frac{d_{k}}{\gamma_k}\}$ and $M=\min\limits_{k\in S_2}\{-\frac{d_{k}}{\gamma_k}\}$,
 and by \eqref{eq:g}, we have
\begin{align*}
  g(z)&=\sum\limits_{k\in S_1}|\alpha_k|\ln(\gamma_k z+d_k)-\sum\limits_{k\in S_2}|\alpha_k|\ln(\gamma_k z+d_k)-\sum\limits_{k\in S_4}|\alpha_k|\ln(\gamma_k z+d_k)
  \end{align*}
So,  we have $\lim\limits_{z\rightarrow M}g(z)=+\infty$.

Below, we will prove that there exist $d_1,\ldots,d_{s}$ such that $0\in (a, M)$,  $g'(0)=0$ and $g''(0)<0$.
For any $i\in (S_1\cup S_2)\setminus \{\ell\}$, where $\ell$ can be any index chosen from $\argmin\limits_{k\in S_1}\{|\alpha_k|\}$, let
$$d_i = \frac{|\gamma_i|}{\epsilon}, \;\;\; \text{where}\; \epsilon \; \text{is a real number.}$$
For the index $\ell$ we choose in the previous step, let
$$d_{\ell}=\frac{|\gamma_{\ell}|}{X},\;\;\; \text{where}\; X\; \text{is a real number.}$$
For any  $i\in S_4$, let
$$d_i = \frac{|\gamma_i|}{Y}, \;\;\; \text{where}\; Y \; \text{is a real number.}$$

Note here, for any $(\epsilon, X, Y)\in {\mathbb R}_{>0}^3$,
we have $$0\in (a, M)=(\max\limits_{k\in S_1\cup S_4}\{-\frac{d_{k}}{\gamma_k}\}, \min\limits_{k\in S_2}\{-\frac{d_{k}}{\gamma_k}\}).$$
Note also,
{\footnotesize
$$\begin{aligned}
  &g'(0)=\sum\limits_{k\in S_1\cup S_2\cup S_4}\frac{\alpha_k\gamma_k}{\gamma_kz+d_k}|_{z=0}=|\alpha_{\ell}|X-\sum\limits_{k\in S_4}|\alpha_k|Y+\sum\limits_{k\in S_1\cup S_2\setminus\{l\}}|\alpha_k|\epsilon,\\
  &g''(0)=\sum\limits_{k\in S_1\cup S_2\cup S_4}\frac{\alpha_k\gamma^2_k}{(\gamma_kz+d_k)^2}|_{z=0}=-|\alpha_{\ell}|X^2+\sum\limits_{k\in S_4}|\alpha_k|Y^2-
  (\sum\limits_{k\in S_1\setminus\{\ell\}}|\alpha_k|-\sum\limits_{k\in S_2}|\alpha_k|)\epsilon^2.
     \end{aligned}$$
     }
If $\sum\limits_{k\in S_4}|\alpha_{k}|>\min\limits_{k\in S_1}\{|\alpha_k|\}$, i.e., $\sum\limits_{k\in S_4}|\alpha_{k}|>|\alpha_{\ell}|$, then for $\epsilon=0$,
for any positive number $X$, and for $Y=X|\alpha_{\ell}|/\sum\limits_{k\in S_4}|\alpha_{k}|$,
we have $g'(0)=0$ and $g''(0)<0$. So, when $\epsilon$ is a sufficiently small positive number,
there exists $(X, Y)\in {\mathbb R}_{>0}^2$ such that  $g'(0)=0$ and $g''(0)<0$.
So, we know by Lemma \ref{lm:3roots}  that we can choose  $d_1,\dots,d_{s}$
   such that for some real number $K$, $g(x)=K$ has at least 3 different solutions in the interval $I=(a, M)$.
\end{proof}

\begin{lemma}\label{lm:case2}
  Suppose exactly three of the four sets $S_1, S_2, S_3, S_4$ are non-empty, say $S_j$, $S_k$, and $S_{\ell}$. Then, the network $G$ admits at least 3 different positive steady states if and only if $\sum\limits_{i\in S_{\ell}}|\alpha_{i}|>\min\limits_{i\in S_k}\{|\alpha_i|\}$, where $k$ and $\ell$ are assumed to be the two indices such that $sign_{\alpha}(k)=-sign_{\alpha}({\ell})=-sign_{\alpha}(j)$, and $sign_{\alpha\gamma}(\ell)=-sign_{\alpha\gamma}(k)=-sign_{\alpha\gamma}(j)$.
\end{lemma}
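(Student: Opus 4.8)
The plan is to reduce all four sub-cases (the one in which $S_1,S_2,S_3$, or $S_4$ is the empty set) to the single sub-case $S_3=\emptyset$, which is already settled by Lemmas \ref{lm:necessary2} and \ref{lm:sufficient2}. When $S_3=\emptyset$ the three non-empty sets are $S_1,S_2,S_4$, and reading off the sign table one checks that the indices singled out in the statement are $j=2$, $k=1$, $\ell=4$; thus Lemmas \ref{lm:necessary2}--\ref{lm:sufficient2} together assert exactly that $cap_{pos}(G)\geq 3$ if and only if $\sum_{i\in S_4}|\alpha_i|>\min_{i\in S_1}\{|\alpha_i|\}$, i.e. if and only if $\sum_{i\in S_\ell}|\alpha_i|>\min_{i\in S_k}\{|\alpha_i|\}$. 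So the base sub-case is already in the form claimed by the theorem.

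To move between sub-cases I would exploit two sign-flipping symmetries of the counting problem in Lemma \ref{lm:g}, each of which preserves the maximal number of solutions of $g(z)=K$ in $I$ (hence preserves $cap_{pos}$) and fixes every magnitude $|\alpha_k|$. First, the substitution $z\mapsto -z$ leaves each $\alpha_k$ unchanged while replacing every $\gamma_k$ by $-\gamma_k$: for a fixed choice of $d_k$ one has, with $w=-z$, the identity $g(-w)=\sum_k\alpha_k\ln(-\gamma_k w+d_k)$, and the admissible interval $I$ is carried bijectively to $-I$, so the number of solutions is unchanged. Flipping the sign of every $\gamma_k$ permutes the sign pattern by swapping $S_1\leftrightarrow S_3$ and $S_2\leftrightarrow S_4$, i.e. it acts as $(1\,3)(2\,4)$. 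Second, replacing $g$ by $-g$ (equivalently $K$ by $-K$) keeps every $\gamma_k$ and every $|\alpha_k|$ but flips the sign of each $\alpha_k$, acting on the sign pattern as $(1\,4)(2\,3)$. These two involutions generate a Klein four-group $\{e,\,(1\,3)(2\,4),\,(1\,4)(2\,3),\,(1\,2)(3\,4)\}$ acting on $\{S_1,S_2,S_3,S_4\}$, and the orbit of ``$S_3$ is the empty set'' consists of all four configurations in which exactly one set is empty (the images of the index $3$ under the four group elements are $3,1,2,4$).

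It then remains to check that each group element transports the inequality of the base sub-case to exactly the inequality prescribed for the target. Since every $|\alpha_k|$ is preserved and the sets are merely relabelled by the permutation $\pi$, the base condition $\sum_{S_\ell}|\alpha_i|>\min_{S_k}\{|\alpha_i|\}$ with $(k,\ell)=(1,4)$ becomes $\sum_{S_{\pi(\ell)}}|\alpha_i|>\min_{S_{\pi(k)}}\{|\alpha_i|\}$. I would record in a short table that $(\pi(k),\pi(\ell))$ agrees with the pair selected by the theorem's sign-rule for the target configuration: for $\pi=(1\,3)(2\,4)$ (target $S_1=\emptyset$) one gets $(3,2)$; for $\pi=(1\,2)(3\,4)$ (target $S_4=\emptyset$) one gets $(2,3)$; and for $\pi=(1\,4)(2\,3)$ (target $S_2=\emptyset$) one gets $(4,1)$. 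Each of these coincides with the pair determined by $sign_{\alpha}(k)=-sign_{\alpha}(\ell)=-sign_{\alpha}(j)$ and $sign_{\alpha\gamma}(\ell)=-sign_{\alpha\gamma}(k)=-sign_{\alpha\gamma}(j)$ applied to the three surviving sets, so the iff in every sub-case follows from the base sub-case.

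The step needing the most care is not any new analytic estimate but the verification that the symmetries are legitimate and tracked correctly. Concretely, I must confirm that the proofs of Lemmas \ref{lm:g''2}--\ref{lm:sufficient2} use only the sign pattern of the pairs $(\alpha_k,\gamma_k)$ and the magnitudes $|\alpha_k|$ — which they do, since $g'$ and $g''$ enter only through \eqref{eq:gd1}--\eqref{eq:gd2} and the subsequent Cauchy--Schwarz bounds — so that they apply verbatim to the sign-flipped data produced by $z\mapsto -z$ and $g\mapsto -g$, even though such data need not itself come from a genuine network. Given this, together with the permutation bookkeeping above, the equivalence in all four sub-cases reduces to the already-established sub-case $S_3=\emptyset$.
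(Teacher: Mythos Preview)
Your proposal is correct and takes essentially the same approach as the paper: reduce to the base case $S_3=\emptyset$ and invoke Lemmas \ref{lm:necessary2} and \ref{lm:sufficient2}. The paper simply writes ``without loss of generality, let $S_3=\emptyset$'' and verifies that $(k,\ell)=(1,4)$ from the sign rule, whereas you spell out the Klein four-group of sign-flipping symmetries that justifies this reduction and carefully track how the pair $(k,\ell)$ transforms---your argument is the explicit version of the paper's implicit one.
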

\begin{proof}
 Without loss of generality, let $S_3=\emptyset$, $S_1\neq\emptyset,S_2\neq\emptyset, S_4\neq\emptyset$. From $sign(\alpha_k)=-sign(\alpha_{\ell})=-sign(\alpha_j)$, and $sign(\alpha_{\ell}\gamma_{\ell})=-sign(\alpha_k\gamma_k)=-sign(\alpha_j\gamma_j)$, we get $k=1$ and $\ell=4$.
So the conclusion follows from Lemma \ref{lm:necessary2} and Lemma \ref{lm:sufficient2}.
\end{proof}

\begin{lemma}\label{lm:case31}
  If all the four sets $S_1, S_2, S_3, S_4$ are singleton, i.e., $S_k=\{k\}$ for $k\in \{1, 2, 3, 4\}$, and if
  $|\alpha_1|=|\alpha_4|$ and $|\alpha_2|=|\alpha_3|$,  then the network $G$ admits at most $2$ different positive steady states.
\end{lemma}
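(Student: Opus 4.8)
The plan is to work through Lemma~\ref{lm:g}: it suffices to show that for every choice of $\kappa_1,\kappa_2>0$ and real $d_1,\dots,d_s$, the equation $g(z)=K$ with $g$ as in~\eqref{eq:g} has at most two solutions in the interval $I=(a,M)$ of~\eqref{eq:interval}. First I would discard the indices in $S_5$: if $\alpha_k=0$ the $k$-th term is absent from $g$, and if $\gamma_k=0$ it contributes only the constant $\alpha_k\ln d_k$, which can be absorbed into $K$; hence the nonconstant part of $g$, and in particular $g'$ and $g''$, involve only $k\in\{1,2,3,4\}$. Writing $p:=|\alpha_1|=|\alpha_4|$ and $q:=|\alpha_2|=|\alpha_3|$ and using the notation $b_k$ from~\eqref{eq:ab}, formulas~\eqref{eq:gd1}--\eqref{eq:gd2} become $g'=p(b_1-b_4)+q(b_2-b_3)$ and $g''=p(b_4^2-b_1^2)+q(b_2^2-b_3^2)$ on $I$.

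The heart of the argument is to show that the sign of $g''$ at a critical point is the same for every critical point. At any $z^*\in I$ with $g'(z^*)=0$ one has $p(b_1-b_4)=q(b_3-b_2)$; calling this common value $t$ and factoring each difference of squares, $g''(z^*)=-p(b_1-b_4)(b_1+b_4)-q(b_3-b_2)(b_2+b_3)=-t\,(b_1+b_2+b_3+b_4)$. Since every $b_k>0$ on $I$, this gives $\operatorname{sign}g''(z^*)=-\operatorname{sign}(t)$. Now $b_1-b_4=\frac{\gamma_1}{\gamma_1z+d_1}-\frac{\gamma_4}{\gamma_4z+d_4}=\frac{A}{(\gamma_1z+d_1)(\gamma_4z+d_4)}$ with $A:=\gamma_1d_4-\gamma_4d_1$ a constant, and the denominator is positive on $I$; hence $\operatorname{sign}(t)=\operatorname{sign}(A)$ does not depend on $z^*$. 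It is precisely the equal-magnitude hypotheses $|\alpha_1|=|\alpha_4|$ and $|\alpha_2|=|\alpha_3|$ that let $g''$ collapse to the single factor $t$ times $\sum_k b_k$; without them the two differences of squares carry different coefficients and no such collapse occurs.

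Assume first $A\neq0$. Then $g''\neq0$ at every critical point, so each is a nondegenerate extremum, and by the previous paragraph all of them are of the same type (all strict maxima if $A>0$, all strict minima if $A<0$). If $g$ had two distinct critical points $z_1<z_2$, both strict maxima say, then the minimum of $g$ on $[z_1,z_2]$ would be attained at an interior point $z_3$, producing a third critical point with $g''(z_3)\ge0$, which contradicts $\operatorname{sign}g''(z_3)=-\operatorname{sign}(A)<0$. Hence $g$ has at most one critical point in $I$, so $g=K$ has at most two solutions and $cap_{pos}(G)\le 2$. (Alternatively, $g'$ equals a quadratic divided by the positive quartic $\prod_{k=1}^4(\gamma_kz+d_k)$, so it has at most two zeros, and two simple zeros would carry opposite $g''$-signs, again forcing at most one.) It remains to treat $A=0$: then $b_1\equiv b_4$ and $g'=q(b_2-b_3)=qB/\big((\gamma_2z+d_2)(\gamma_3z+d_3)\big)$ with $B:=\gamma_3d_2-\gamma_2d_3$, which has constant sign on $I$; so either $B\neq0$ and $g$ is strictly monotone (at most one solution), or $A=B=0$ and $g$ is constant, i.e.\ $G$ has infinitely many positive steady states, a case excluded by the standing hypothesis $cap_{pos}(G)<+\infty$.

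The main obstacle is the factorization $g''(z^*)=-t\sum_k b_k$ at critical points: it is the only place the equal-magnitude conditions enter, and it is what upgrades the naive bound (three solutions, from $g'$ having up to two zeros) to the sharp bound of two. The secondary nuisance is the degenerate stratum $A=B=0$, where $g$ becomes constant; this must be flagged as the infinitely-many-steady-states case and dismissed via finiteness rather than by the extremum count.
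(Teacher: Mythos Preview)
Your proof is correct and follows essentially the same approach as the paper: both reduce via Lemma~\ref{lm:g} to showing $g(z)=K$ has at most two solutions in $I$, and both obtain this by proving that $g''(z^*)$ has a fixed sign at every critical point through the same factorization $g''(z^*)=-t\,(b_1+b_2+b_3+b_4)$ (the paper writes it as $|\alpha_2|(b_2-b_3)\sum_k b_k$, you as $-p(b_1-b_4)\sum_k b_k$; these agree since $p(b_1-b_4)=q(b_3-b_2)$ at a critical point). The only cosmetic difference is the case split: the paper splits on whether $d_2/\gamma_2=d_3/\gamma_3$ (i.e., $b_2\equiv b_3$), while you split on whether $A=\gamma_1 d_4-\gamma_4 d_1=0$ (i.e., $b_1\equiv b_4$); the two pairs play symmetric roles. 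Your treatment of the doubly degenerate case $A=B=0$ is in fact more careful than the paper's Case~1, which asserts ``$g'(z)=0$ has at most one solution'' without addressing the possibility $d_1/\gamma_1=d_4/\gamma_4$; you correctly flag that $g$ is then constant and that this is ruled out by the finiteness hypothesis $cap_{pos}(G)<+\infty$ from Theorem~\ref{thm:main}.
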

\begin{proof}
Let $g(z)$ be the function defined in \eqref{eq:g}. Let $I$ be the interval defined in \eqref{eq:interval}.
If $S_k=\{k\}$ for $k\in \{1, 2, 3, 4\}$, and if
  $|\alpha_1|=|\alpha_4|$ and $|\alpha_2|=|\alpha_3|$, then we have
 \begin{align}\label{eq:case31}
 g'(z)=|\alpha_1|b_1 + |\alpha_2|b_2 - |\alpha_2|b_3 - |\alpha_1|b_4.
 \end{align}
 Here, recall \eqref{eq:ab} that $b_i := |\frac{\gamma_i}{\gamma_iz+d_i}|$.

 {\bf Case 1.}
 For any $(d_2, d_3)\in {\mathbb R}^2$ such that $\frac{d_2}{\gamma_2}=\frac{d_3}{\gamma_3}$, we have
  \begin{align*}
 g'(z)=|\alpha_1|b_1  - |\alpha_1|b_4.
 \end{align*}
 Clearly, $g'(z)=0$ has at most one solution in $I$, and hence, for any $K\in {\mathbb R}$,
 $g(z)=K$ has at most two solutions in $I$. By Lemma \ref{lm:g}, the $cap_{pos}(G)\leq 2$.

  {\bf Case 2.}
 For any $(d_2, d_3)\in {\mathbb R}^2$ such that $\frac{d_2}{\gamma_2}\neq \frac{d_3}{\gamma_3}$,
 without loss of generality, we assume $\frac{d_2}{\gamma_2}<\frac{d_3}{\gamma_3}$. Then, for any
 $z\in I$, we have $b_2<b_3$.
 Note that
  \begin{align}
 g''(z)&=-|\alpha_1|b^2_1 +|\alpha_2|b^2_2 - |\alpha_2|b^2_3   + |\alpha_1|b^2_4 \notag\\
       &=-|\alpha_1|(b_1+b_4)(b_1-b_4) +  |\alpha_2|(b_2 +b_3)(b_2 -b_3). \label{eq:case311}
 \end{align}
 Suppose $z^*\in I$ and  $g'(z*)=0$. By \eqref{eq:case31}, we have
  \begin{align}\label{eq:case312}
       -|\alpha_1|(b_1(z^*)-b_4(z^*)) =  |\alpha_2|(b_2(z^*) -b_3(z^*)).
 \end{align}
 Substitute \eqref{eq:case312} into \eqref{eq:case311}, and we have
 \begin{align*}
 g''(z^*)= |\alpha_2|(b_1(z^*)+b_2(z^*) +b_3(z^*)+b_4(z^*))(b_2(z^*) -b_3(z^*))<0.
 \end{align*}
 That means $ g''(z)$ does not change sign on the set $\{z^*\in I|g'(z^*)=0\}$.
 So, $g'(z)=0$ has at most one solution in $I$, and hence, for any $K\in {\mathbb R}$,
 $g(z)=K$ has at most two solutions in $I$. By Lemma \ref{lm:g}, the $cap_{pos}(G)\leq 2$.
\end{proof}

\begin{lemma}\label{lm:case32}
  If all the four sets $S_1, S_2, S_3, S_4$ are non-empty, and if $\min\limits_{i\in S_1}\{|\alpha_i|\}<\sum\limits_{i\in S_4}|\alpha_{i}|$, then the network $G$ admits at least 3 different positive steady states.
\end{lemma}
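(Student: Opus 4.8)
The plan is to reproduce, in this four-block setting, the scheme already used in Lemma \ref{lm:sufficient2}. By Lemma \ref{lm:g} it suffices to produce parameters $d_1,\dots,d_s$ (and a constant $K$) for which the function $g$ of \eqref{eq:g} satisfies $g(z)=K$ at three points of the interval $I=(a,M)$ from \eqref{eq:interval}. To manufacture these three solutions I would invoke Lemma \ref{lm:3roots} at the test point $z^*=0$: I will choose the $d_k$ so that $0\in(a,M)$, $g'(0)=0$, $g''(0)<0$, and $\lim_{z\to M}g(z)=+\infty$, which gives $g''(0)\lim_{z\to M}g(z)<0$, exactly the hypothesis of Lemma \ref{lm:3roots}.

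First I would fix an index $\ell\in\argmin\limits_{k\in S_1}\{|\alpha_k|\}$ and introduce four positive scales. For $i\in(S_1\cup S_2)\setminus\{\ell\}$ set $d_i=|\gamma_i|/\epsilon$; for the index $\ell$ set $d_\ell=|\gamma_\ell|/X$; for $i\in S_4$ set $d_i=|\gamma_i|/Y$; and for $i\in S_3$ set $d_i=|\gamma_i|/\delta$. With $b_k(z)=|\gamma_k/(\gamma_kz+d_k)|$ as in \eqref{eq:ab}, these choices give $b_k(0)$ equal to $\epsilon$, $X$, $Y$, or $\delta$ according to the block containing $k$, and they keep every $d_k>0$. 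Since for $\gamma_k>0$ (the blocks $S_1,S_4$) one has $-d_k/\gamma_k<0$, this forces $a=\max_{k:\gamma_k>0}\{-d_k/\gamma_k\}<0$, so $0\in(a,M)$.

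Next I would read off from \eqref{eq:gd1}--\eqref{eq:gd2} that in the limit $\epsilon=\delta=0$ one has $g'(0)=|\alpha_\ell|X-\sum_{k\in S_4}|\alpha_k|Y$ and $g''(0)=-|\alpha_\ell|X^2+\sum_{k\in S_4}|\alpha_k|Y^2$. Imposing $g'(0)=0$ forces $Y=X|\alpha_\ell|/\sum_{k\in S_4}|\alpha_k|$, and substituting yields
\[
g''(0)=|\alpha_\ell|X^2\left(\frac{|\alpha_\ell|}{\sum_{k\in S_4}|\alpha_k|}-1\right),
\]
which is strictly negative precisely because the hypothesis $\min_{k\in S_1}\{|\alpha_k|\}=|\alpha_\ell|<\sum_{k\in S_4}|\alpha_k|$ holds. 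By continuity of $g'(0)$ and $g''(0)$ in $(\epsilon,\delta,X,Y)$, for all sufficiently small $\epsilon,\delta>0$ there still exist $X,Y>0$ with $g'(0)=0$ and $g''(0)<0$.

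The remaining, and most delicate, point is the boundary behaviour at $M$, which is where the non-emptiness of $S_3$ genuinely intervenes. The factors blowing up as $z\to M^-$ come from the blocks with $\gamma_k<0$, namely $S_2$ and $S_3$; an $S_2$ factor contributes $+\infty$ to $g$ (since $\alpha_k<0$) whereas an $S_3$ factor contributes $-\infty$ (since $\alpha_k>0$), so I must arrange for the divergence to originate from $S_2$. Because $-d_k/\gamma_k$ equals $1/\epsilon$ for $k\in S_2$ and $1/\delta$ for $k\in S_3$ (see \eqref{eq:am}), I would take $\delta<\epsilon$, so that $M=1/\epsilon$ is attained on $S_2$ while the $S_3$ factors stay bounded away from $0$ at $z=M$; this guarantees $\lim_{z\to M}g(z)=+\infty$. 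With $g'(0)=0$, $g''(0)<0$, and $\lim_{z\to M}g(z)=+\infty$ in place, Lemma \ref{lm:3roots} yields a $K$ for which $g(z)=K$ has at least three solutions in $I$, and Lemma \ref{lm:g} then delivers at least three positive steady states. The main obstacle is exactly this coordination of scales: one must simultaneously render the $S_3$ contribution to $g'(0)$ and $g''(0)$ negligible \emph{and} force the blow-up at $M$ to come from $S_2$ rather than $S_3$, which is what the ordering $\delta<\epsilon$ achieves.
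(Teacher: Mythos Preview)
Your proposal is correct and follows essentially the same approach as the paper's proof: the paper likewise picks $\ell\in\argmin_{k\in S_1}|\alpha_k|$, assigns scales $\epsilon_1$ to $(S_1\cup S_2)\setminus\{\ell\}$ and $\epsilon_2$ to $S_3$ (your $\epsilon,\delta$), sets $d_\ell=|\gamma_\ell|/X$ and $d_i=|\gamma_i|/Y$ for $i\in S_4$, imposes $\epsilon_1>\epsilon_2$ so that $M$ is attained on $S_2$ and $\lim_{z\to M}g(z)=+\infty$, and then applies Lemma~\ref{lm:3roots} after the same limiting computation of $g'(0)$ and $g''(0)$. Your identification of the ordering $\delta<\epsilon$ as the key coordination step is exactly the point the paper makes with $\epsilon_1>\epsilon_2$.
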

\begin{proof}
 The proof is similar to the proof of  Lemma \ref{lm:necessary2}.
 If  $S_k\neq \emptyset$ for every $k\in \{1, 2, 3, 4\}$, then by \eqref{eq:s} and \eqref{eq:am}, we have $a=\max\limits_{k\in S_1\cup S_4}\{-\frac{d_{k}}{\gamma_k}\}$ and $M=\min\limits_{k\in S_2\cup S_3}\{-\frac{d_{k}}{\gamma_k}\}$,
 and by \eqref{eq:g}, we have
 {\scriptsize
\begin{align}\label{eq:gcase32}
  g(z)&=\sum\limits_{k\in S_1}|\alpha_k|\ln(\gamma_k z+d_k)-\sum\limits_{k\in S_2}|\alpha_k|\ln(\gamma_k z+d_k)+\sum\limits_{k\in S_3}|\alpha_k|\ln(\gamma_k z+d_k)-\sum\limits_{k\in S_4}|\alpha_k|\ln(\gamma_k z+d_k)
  \end{align}
  }

Below, we will prove that there exist $d_1,\ldots,d_{s}$ such that $\lim\limits_{z\rightarrow M}g(z)=+\infty$,  $0\in (a, M)$,  $g'(0)=0$ and $g''(0)<0$.
For any $i\in (S_1\cup S_2)\setminus \{\ell\}$, where $\ell$ can be any index chosen from $\argmin\limits_{k\in S_1}\{|\alpha_k|\}$, let
$$d_i = \frac{|\gamma_i|}{\epsilon_1}, \;\;\; \text{where}\; \epsilon_1 \; \text{is a real number.}$$
For the index $\ell$ we choose in the previous step, let
$$d_{\ell}=\frac{|\gamma_{\ell}|}{X},\;\;\; \text{where}\; X\; \text{is a real number.}$$
For any  $i\in S_3$, let
$$d_i = \frac{|\gamma_i|}{\epsilon_2}, \;\;\; \text{where}\; \epsilon_2 \; \text{is a real number}.$$
For any  $i\in S_4$, let
$$d_i = \frac{|\gamma_i|}{Y}, \;\;\; \text{where}\; Y \; \text{is a real number.}$$

Note here, for any $(\epsilon_1, \epsilon_2, X, Y)\in {\mathbb R}_{>0}^4$ such that $\epsilon_1>\epsilon_2$,
we have $$0\in (a, M)=(\max\limits_{k\in S_1\cup S_4}\{-\frac{d_{k}}{\gamma_k}\}, \min\limits_{k\in S_2}\{-\frac{d_{k}}{\gamma_k}\}).$$
So, by \eqref{eq:gcase32}, we have $\lim\limits_{z\rightarrow M}g(z)=+\infty$.
Note also, $$\begin{aligned}
  &g'(0)=|\alpha_{\ell}|X-
     \sum\limits_{k\in S_4}|\alpha_k|Y+(\sum\limits_{k\in S_1\setminus\{\ell\}}|\alpha_k|+\sum\limits_{k\in S_2}|\alpha_k|)\epsilon_1-\sum\limits_{k\in S_3}|\alpha_k|\epsilon_2,\\
  &g''(0)=-|\alpha_{\ell}|X^2+
    \sum\limits_{k\in S_4}|\alpha_k|Y^2-(\sum\limits_{k\in S_1\setminus\{\ell\}}|\alpha_k|-
     \sum\limits_{k\in S_2}|\alpha_k|)\epsilon_1^2-\sum\limits_{k\in S_3}|\alpha_k|\epsilon_2^2.
     \end{aligned}$$
If $\sum\limits_{k\in S_4}|\alpha_{k}|>\min\limits_{k\in S_1}\{|\alpha_k|\}$, i.e., $\sum\limits_{k\in S_4}|\alpha_{k}|>|\alpha_{\ell}|$, then for $\epsilon_1=\epsilon_2=0$,
for any positive number $X$, and for $Y=X|\alpha_{\ell}|/\sum\limits_{k\in S_4}|\alpha_{k}|$,
we have $g'(0)=0$ and $g''(0)<0$. So, when $\epsilon_1$ and $\epsilon_2$ are sufficiently small such that $\epsilon_1>\epsilon_2$,
there exists $(X, Y)\in {\mathbb R}_{>0}^2$ such that  $g'(0)=0$ and $g''(0)<0$.
So, we know by Lemma \ref{lm:3roots}  that we can choose  $d_1,\dots,d_{s}$
   such that for some real number $K$, $g(z)=K$ has at least 3 different solutions in the interval $I=(a, M)$.
   By Lemma \ref{lm:g}, we have $cap_{pos}(G)\geq 3$.
\end{proof}

\begin{lemma}\label{lm:case3}
  If all the four sets $S_1, S_2, S_3, S_4$ are non-empty, then the network $G$ admits at least 3 different positive steady states if and only if at least one of the four conditions \eqref{eq:cond1}--\eqref{eq:cond2} holds.
\end{lemma}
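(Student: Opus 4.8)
The plan is to establish both directions. For sufficiency ($\Leftarrow$), the first inequality in \eqref{eq:cond1}, namely $\min_{i\in S_1}\{|\alpha_i|\}<\sum_{i\in S_4}|\alpha_i|$, is exactly the hypothesis of Lemma \ref{lm:case32}, so it already forces $cap_{pos}(G)\geq 3$. I would recover the other three inequalities by exploiting two sign symmetries of the function $g(z)$ from \eqref{eq:g}, each of which preserves the number of solutions of $g=K$ supplied by Lemma \ref{lm:g}. Replacing $\alpha_k$ by $-\alpha_k$ for all $k$ turns $g$ into $-g$; this keeps the interval $I$ and the root count unchanged while interchanging $S_1\leftrightarrow S_4$ and $S_2\leftrightarrow S_3$, so Lemma \ref{lm:case32} applied to the transformed network yields the second inequality of \eqref{eq:cond1}. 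The reflection $z\mapsto -z$ (equivalently replacing $\gamma_k$ by $-\gamma_k$) interchanges $S_1\leftrightarrow S_3$ and $S_2\leftrightarrow S_4$ and preserves root counts on the reflected interval, converting \eqref{eq:cond1} into \eqref{eq:cond2}. Composing the two symmetries (which amounts to swapping the two reactions) covers the remaining case. Thus any one of the four inequalities transports the hypothesis of Lemma \ref{lm:case32} and gives $cap_{pos}(G)\geq 3$.

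For necessity ($\Rightarrow$) I would argue by contraposition: assume all four inequalities in \eqref{eq:cond1}--\eqref{eq:cond2} fail and deduce $cap_{pos}(G)\leq 2$. The key is a pinching argument. Since each $S_k$ is nonempty and all the $|\alpha_i|$ are positive, we always have $\min_{i\in S_k}\{|\alpha_i|\}\leq \sum_{i\in S_k}|\alpha_i|$, with equality exactly when $S_k$ is a singleton. Negating the two inequalities of \eqref{eq:cond1} gives $\sum_{i\in S_4}|\alpha_i|\leq \min_{i\in S_1}\{|\alpha_i|\}$ and $\sum_{i\in S_1}|\alpha_i|\leq \min_{i\in S_4}\{|\alpha_i|\}$; chaining these with the trivial bounds yields
\[
\sum_{i\in S_4}|\alpha_i|\leq \min_{i\in S_1}\{|\alpha_i|\}\leq \sum_{i\in S_1}|\alpha_i|\leq \min_{i\in S_4}\{|\alpha_i|\}\leq \sum_{i\in S_4}|\alpha_i|,
\]
so every inequality is an equality. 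Hence $S_1$ and $S_4$ are singletons and their unique coefficients have equal absolute value. Negating \eqref{eq:cond2} forces, in the same way, that $S_2$ and $S_3$ are singletons with equal absolute value. After reordering species this is precisely the configuration of Lemma \ref{lm:case31} ($S_k=\{k\}$ with $|\alpha_1|=|\alpha_4|$ and $|\alpha_2|=|\alpha_3|$), which gives $cap_{pos}(G)\leq 2$, establishing the contrapositive.

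The main obstacle I anticipate lies in the sufficiency direction, in justifying that the two symmetries genuinely preserve $cap_{pos}$ and land exactly on the stated inequalities. Concretely, I must check that replacing $(\alpha_k,\gamma_k)$ by $(-\alpha_k,\gamma_k)$ or by $(\alpha_k,-\gamma_k)$ produces an admissible bi-reaction network with a one-dimensional stoichiometric subspace (in particular that the required sign $\lambda_2<0$ is retained, so that $K$ in \eqref{eq:lmg} remains real), and that under the bijection of Lemma \ref{lm:g} between positive steady states and roots of $g=K$ on $I$ the reflected interval still carries the same number of roots. Once this bookkeeping is settled, Lemma \ref{lm:case32} transfers verbatim to each of the four cases. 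By contrast, the necessity direction is routine once the pinching inequality above is in place, since it collapses directly onto Lemma \ref{lm:case31}.
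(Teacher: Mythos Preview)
Your proposal is correct and matches the paper's proof closely. The necessity direction is identical: the paper negates all four inequalities, runs the same pinching chain to force each $S_k$ to be a singleton with $|\alpha_1|=|\alpha_4|$ and $|\alpha_2|=|\alpha_3|$, and invokes Lemma~\ref{lm:case31}. For sufficiency, the paper simply writes ``without loss of generality, we assume the first condition in \eqref{eq:cond1} holds'' and applies Lemma~\ref{lm:case32}; your symmetry argument is just an explicit unpacking of that WLOG.

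One small redirection on the obstacle you flag: you do not need the transformations $(\alpha_k,\gamma_k)\mapsto(-\alpha_k,\gamma_k)$ or $(\alpha_k,\gamma_k)\mapsto(\alpha_k,-\gamma_k)$ to correspond to actual bi-reaction networks, and in fact individually they do not (swapping the two reactions sends $(\alpha_k,\gamma_k)$ to $(-\alpha_k,\lambda_2\gamma_k)$ with $\lambda_2<0$, which is the \emph{composition} of your two symmetries). The clean way to justify the WLOG is purely at the level of Lemma~\ref{lm:g}: that lemma reduces $cap_{pos}(G)\geq 3$ to the existence of $d_1,\dots,d_s$ and some real $K$ with $g(z)=K$ having three roots in $I$, and this analytic statement is manifestly invariant under $g\mapsto -g$ (which swaps $S_1\!\leftrightarrow\!S_4$, $S_2\!\leftrightarrow\!S_3$) and under $z\mapsto -z$ (which swaps $S_1\!\leftrightarrow\!S_3$, $S_2\!\leftrightarrow\!S_4$). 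Equivalently, one simply rewrites the construction in the proof of Lemma~\ref{lm:case32} with the roles of the $S_k$ permuted. Either way, the bookkeeping about $\lambda_2<0$ and admissibility of a transformed network is unnecessary.
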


\begin{proof}
``$\Rightarrow$:"
We assume that none of the four conditions in \eqref{eq:cond1}--\eqref{eq:cond2} holds, i.e., we have the following four conditions
 \begin{align}
\min\limits_{i\in S_1}\{|\alpha_i|\}\geq \sum\limits_{i\in S_4}|\alpha_{i}|,\;\;
 \min\limits_{i\in S_4}\{|\alpha_i|\}\geq\sum\limits_{i\in S_1}|\alpha_{i}|, \label{eq:con1}\\
 \min\limits_{i\in S_2}\{|\alpha_i|\}\geq \sum\limits_{i\in S_3}|\alpha_{i}|,\;\;
 \min\limits_{i\in S_3}\{|\alpha_i|\}\geq \sum\limits_{i\in S_2}|\alpha_{i}|\label{eq:con4}
 \end{align}
 hold simultaneously.
 By \eqref{eq:con1}--\eqref{eq:con4},
 we have
 $$\min\limits_{i\in S_1}\{|\alpha_i|\}\geq\sum\limits_{i\in S_4}|\alpha_i|\geq \min\limits_{i\in S_4}\{|\alpha_i|\}\geq \sum\limits_{i\in S_1}|\alpha_i|\geq \min\limits_{i\in S_1}\{|\alpha_i|\}, \;\text{and}$$
 $$\min\limits_{i\in S_2}\{|\alpha_i|\}\geq\sum\limits_{i\in S_3}|\alpha_i|\geq \min\limits_{i\in S_3}\{|\alpha_i|\}\geq \sum\limits_{i\in S_2}|\alpha_i|\geq \min\limits_{i\in S_2}\{|\alpha_i|\}. $$
 So, we conclude that
 all the four sets $S_1, S_2, S_3, S_4$ are singleton, and if we assume that $S_k=\{k\}$ for every $k\in \{1, 2, 3, 4\}$, then
 we have $|\alpha_1|=|\alpha_4|$ and $|\alpha_2|=|\alpha_3|$. By Lemma \ref{lm:case31}, the network $G$ admits at most $2$ different positive steady states.

``$\Leftarrow$:"
 Suppose at least one of the four conditions in \eqref{eq:cond1}--\eqref{eq:cond2} holds. Without loss of generality, we assume that
 the first condition in \eqref{eq:cond1} holds, i.e.,  we have $\sum\limits_{i\in S_4}|\alpha_{i}|>\min\limits_{i\in S_1}\{|\alpha_i|\}$.
 By Lemma \ref{lm:case32},  the network $G$ admits at least 3 different positive steady states.
\end{proof}

{\bf Proof of Theorem \ref{thm:main}.}
\begin{proof}
The conclusion (a) follows from Lemma \ref{lm:sign} (b). The statements (b), (c), and (d) are proved in Lemma \ref{lm:case1}, Lemma \ref{lm:case2} and Lemma \ref{lm:case3}.
\end{proof}

\section{Discussion}\label{sec:dis}

For the networks with one-dimensional stoichiometric subspaces that admit finitely many positive steady states, we remark again that admitting at least three positive steady states is a necessary condition for multistability. So, characterizing networks admitting more positive steady states is as important as characterizing multistationarity. However, by comparing Theorem \ref{thm:main} and Lemma \ref{lm:nondegmss}, one can see that the conditions for admitting three (nondegenerate) positive steady states are more complicated than those for admitting two (nondegenerate) positive steady states (note here, by Theorem \ref{thm:nc}, we see that Theorem \ref{thm:main} also characterize nondegenerate multistationarity). So, even for bi-reaction networks, directly characterizing multistability or characterizing at least four positive steady states should be a challenging task.
Also, from Theorem \ref{thm:Q6.1} and Theorem \ref{thm:ad3}, we want to ask if it is true that
$cap_{pos}(G)\geq N$ implies $Ad(G)\geq N$.


\begin{thebibliography}{10}

\bibitem{BF2001}
Christoph Bagowski, and James Ferrell Jr.
\newblock Bistability in the JNK cascade.
\newblock {\em Curr. Biol.}, 11(15):1176--82, 2001.

\bibitem{BP}
Murad Banaji, and Casian Pantea.
\newblock Some results on injectivity and multistationarity in chemical
  reaction networks.
\newblock {\em SIAM J.\ Appl.\ Dyn.\ Syst.}, 15(2):807--869, 2016.

\bibitem{BP16}
Murad Banaji, and Casian Pantea.
\newblock The inheritance of nondegenerate multistationarity in chemical reaction networks.
\newblock {\em SIAM J.\ Appl.\ Math.}, 78:1105--1130, 2018.
%
\bibitem{B2017}
Russell Bradford, James H. Davenport, Matthew England,  Hassan Errami, Vladimir Gerdt, Dima Grigoriev, Charles  Hoyt, Marek Ko\v{s}ta, Ovidiu Radulescu, Thomas Sturm,  and Andreas Weber.
\newblock A case study on the parametric occurrence of multiple steady states.
\newblock  {\em Proceedings of the 2017 ACM on International Symposium on Symbolic and Algebraic Computation}, 45-52, 2017.


\bibitem{CFMW}
Carsten Conradi, Elisenda Feliu, Maya Mincheva, and Carsten Wiuf.
\newblock Identifying parameter regions for multistationarity.
\newblock {\em PLoS Comput.\ Biol.}, 13(10):e1005751, 2017.

\bibitem{CF2012}
Carsten Conradi, and Dietrich Flockerzi.
\newblock  Switching in mass action networks based on linear inequalities.
\newblock  {\em SIAM J. Appl. Dyn. Syst.}, 11(1):110-134, 2012.


\bibitem{CF2005}
Gheorghe Craciun, and Martin Feinberg.
\newblock Multiple equilibria in complex chemical reaction networks: I. the injectivity property.
\newblock   {\em SIAM J. Appl. Math.}, 65:1526--1546, 2005.

\bibitem{CTF2006}
Gheorghe Craciun,  Yangzhong Tang, and Martin Feinberg.
\newblock  Understanding bistability in complex enzyme-driven reaction networks.
\newblock   {\em PNAS}, 103(23):8697--8702, 2006.












   \bibitem{DMST2019}
  Alicia Dickenstein, Mercedes Perez Millan, Anne Shiu, and Xiaoxian Tang.
  \newblock Multistationarity in structured reaction networks.
  \newblock {\em Bull. Math. Biol.}.
81(5):1527--1581, 2019.














%







\bibitem{SH2021}
 Suvankar Halder, Sumana Ghosh, Joydev Chattopadhyay, and  Samrat Chatterjee.
\newblock Bistability in cell signalling and its significance in identifying potential drug targets.
\newblock {\em Bioinformatics.}, 2021.

\bibitem{CH2021}
Clarmyra Hayes, Elisenda Feliu, and Orkun S. Soyer.
\newblock Multi-site enzymes as a mechanism for bistability in reaction networks.
\newblock {\em BioRxiv.}, 2021.05.06.442945.


\bibitem{HTX2015}
Hoon Hong, Xiaoxian Tang, and Bican Xia.
\newblock Special algorithm for stability analysis of multistable biological regulatory systems.
\newblock {\em J. Symbolic Comput.}, 70:112--135, 2015.


\bibitem{joshi2013complete}
Badal Joshi.
\newblock Complete characterization by multistationarity of fully open networks
  with one non-flow reaction.
\newblock {\em Appl.\ Math.\ Comput.}, 219:6931--6945, 2013.

\bibitem{JS13}
Badal Joshi and Anne Shiu.
\newblock Atoms of multistationarity in chemical reaction networks,
\newblock {\em J. Math. Chem.}, 51(1):153--178, 2013.



\bibitem{Joshi:Shiu:Multistationary}
Badal Joshi, and Anne Shiu.
\newblock Which small reaction networks are multistationary?
\newblock {\em SIAM J.\ Appl.\ Dyn.\ Syst.}, 16(2):802--833, 2017.





\bibitem{signs}
Stefan M\"uller, Elisenda Feliu, Georg Regensburger, Carsten Conradi, Anne
  Shiu, and Alicia Dickenstein.
\newblock Sign conditions for injectivity of generalized polynomial maps with
  applications to chemical reaction networks and real algebraic geometry.
\newblock {\em Found. Comput. Math.}, 16(1):69--97, 2016.


\bibitem{SN2020}
Shintaro Nagata, and Macoto Kikuchi.
\newblock Emergence of cooperative bistability and robustness of gene regulatory networks.
\newblock {\em PLoS Comput.\ Biol.}, 16(6):e1007969, 2020.




\bibitem{OSTT2019}
Nida Obatake, Anne Shiu, Xiaoxian Tang, and Angelica Torres.
\newblock Oscillations and bistability in a model of ERK regulation.
\newblock {\em J. Math. Biol.}, 79:1515--1549, 2019.





\bibitem{P2001}
Lawrence  Perko.
\newblock Differential equations and dynamical systems,
\newblock {\em Volume 7 of Texts in Applied Mathematics. Springer-Verlag, New York, third edition}, 2001.

\bibitem{SadeghimaneshFeliu2019}
AmirHosein Sadeghimanesh, and Elisenda Feliu.
\newblock The multistationarity structure of networks with intermediates
and a binomial core network,
\newblock {\em Bull. Math. Biol.}, 81:2428--2462, 2019.


\bibitem{ShinarFeinberg2012}
Guy Shinar, and Martin Feinberg.
\newblock Concordant chemical reaction networks.
\newblock {\em Math.\ Biosci.}, 240(2):92--113, 2012.


\bibitem{shiu-dewolff}
Anne Shiu, and Timo de~Wolff.
\newblock Nondegenerate multistationarity in small reaction networks.
\newblock {\em Discrete Contin.  Dyn. Syst. B}, 24(6):2683--2700, 2019.





\bibitem{tx2020}
Xiaoxian Tang, and Hao Xu.
\newblock Multistability of small reaction networks.
\newblock {\em SIAM J. Appl. Dyn. Syst.}, 20:608-635, 2021.

\bibitem{txzs2020}
Xiaoxian Tang, and Zhishuo Zhang.
\newblock Multistability of reaction networks with one-dimensional stoichiometric subspaces.
\newblock arXiv:2012.15437.

\bibitem{TF2020}
Ang\'elica Torres, and Elisenda Feliu.
\newblock Detecting parameter regions for bistability in reaction networks.
\newblock {\em SIAM J. Appl. Dyn. Syst.}, 20(1):1--37, 2021.




\bibitem{WiufFeliu_powerlaw}
Carsten Wiuf, and Elisenda Feliu.
\newblock Power-law kinetics and determinant criteria for the preclusion of
  multistationarity in networks of interacting species.
\newblock {\em SIAM J. Appl. Dyn. Syst.}, 12:1685--1721, 2013.

\bibitem{XF2003}
Wen Xiong, and James Ferrell Jr.
\newblock A positive-feedback-based bistable `memory module' that governs a cell fate decision.
\newblock {\em Nature}, 426:460--465, 2003.

\end{thebibliography}

\end{document}